\titleformat{\section}[hang]
{\normalfont\Large\bfseries}
{\thesection.}{0.5em}{}
\titlespacing*{\section}{0pc}{2pc}{0.25pc}
\titleformat{\subsection}[runin]
{\normalfont\large\bfseries}
{\thesubsection}{0.5em}{}
\titlespacing{\subsection}{0pc}{1.5pc}{0.5pc}
\newcommand{\N}{\mathbb{N}}
\newcommand{\Z}{\mathbb{Z}}
\newcommand{\C}{\mathbb{C}}
\definecolor{ggreen}{HTML}{7FDD99}
\definecolor{obnoxious}{HTML}{662288}
\newtheorem{thm}{Theorem}[section]
\newtheorem{lem}[thm]{Lemma}
\newtheorem{cor}[thm]{Corollary}
\newenvironment{cthm}[1]
  {\innercthm}
  {\endinnercthm}
\theoremstyle{definition}
\newtheorem{defi}[thm]{Definition}
\newtheorem{ex}[thm]{Example}
\newtheorem{rem}[thm]{Remark}
\title{\textbf{Schreier's Formula for some Free Probability Invariants}}
\author{Aldo Garcia Guinto}
\address{Department of Mathematics, Michigan State University\hfill \url{garci575@msu.edu}}
\date{}
\begin{document}

\maketitle

\begin{abstract}
Let $G\stackrel{\alpha}{\curvearrowright}(M,\tau)$ be a trace-preserving action of a finite group $G$ on a tracial von Neumann algebra. Suppose that $A \subset M$ is a finitely generated unital $*$-subalgebra which is globally invariant under $\alpha$. We give a formula relating the von Neumann dimension of the space of derivations on $A$ valued on its coarse bimodule to the von Neumann dimension of the space of derivations on $A \rtimes_\alpha G$ valued on its coarse bimodule, which is reminiscent of Schreier's formula for finite index subgroups of free groups. This formula induces a formula for the free Stein dimension (defined by Charlesworth and Nelson) $\dim \text{Der}_c(A,\tau)$ (defined by Shlyakhtenko) and $\Delta$ (defined by Connes and Shlyakhtenko). The latter is done by establishing that $\Delta$ is equal to the von Neumann dimension of a certain subspace of the derivation space of $A$, similar to that of the free Stein dimension, and assuming that $G$ is abelian group. Using the formula for $\Delta$, we recover recent results of Shlyakhtenko on the microstates free entropy dimension.
\end{abstract}

%%%%%%%%%%%%%%%%%%%%%%%%%%%%%%%
   %%%   Introduction   %%%   
%%%%%%%%%%%%%%%%%%%%%%%%%%%%%%%
\section*{Introduction}
Let $X = (x_1, \ldots , x_n)$ be a tuple of operators in a tracial von Neumann algebra $(M,\tau)$. Some of the quantitative free probability numerical invariants associated with the distribution of $X$ with respect to $\tau$ are the free entropy dimensions $\delta(X), \delta_0(X), \delta^*(X),$ and $\delta^\star(X)$ (see \cite{Voi94, Voi96, CS05}); the free Fisher information $\Phi^*(X)$ (see \cite{Voi98}); etc. The following free probability numerical invariants are the invariants that we study in this paper. In \cite{CN22}, Charlesworth and Nelson defined an invariant called the free Stein dimension $\sigma(X, \tau)$, which comes from taking a von Neumann dimension of a particular subspace of derivations (see Section 1.3). In \cite{CS05}, Connes and Shlyakhtenko defined an invariant denoted by $\Delta(X,\tau)$, which comes from taking a von Neumann dimension of a closed subspace of Hilbert-Schmidt operators. Theses quantities are invariants of the $*$-algebra generated by $X$, say $A =\C\langle X \rangle$, and we denote these quantities by $\sigma(A,\tau)$ and $\Delta(A,\tau)$. Similar to the free Stein dimension, $\Delta$ can be computed by taking the von Neumann dimension of a certain subspace of derivation on $A$ valued on $L^2(A \otimes A^\circ, \tau \otimes \tau^\circ)$ (see Lemma \ref{Lem:CS_Correp_Der}). In \cite[Theorem 1.1]{MSY20} the authors showed that $\Delta$ being maximal ($\Delta(A,\tau) = n$) implies that there is an absence of rational relations, that is any nontrivial rational function evaluates to an affiliated rational operator of the tuple. In this paper, we also investigate a certain subspace $\text{Der}_c(A,\tau)$ of the derivation space defined by Shlyakhtenko in \cite{Shl09}.

From group theory, the Nielsen--Schreier theorem states that any subgroup $H$ of a free group $F$ is a free group and if $F \cong \mathbb{F}_n$ for $n \in \N$ and $[F: H] < \infty$, then $H \cong \mathbb{F}_k$, where 
 $$k = 1 + [F: H](n-1).$$ 
The formula above is called Schreier's formula. Since there exists a generating set $X$ for $L(\mathbb{F}_n)$ such that the non-microstate free entropy dimension $\delta^\star(X) =n$ (see \cite{Voi98, CS05}), one expects that given a finite index of subfactors $M_0 \subset M_1$, and a finite generating set $S_0$ of $M_0$, there exists a generating set $S_1$ for $M_1$ such that 
 $$\delta^\star(S_1) -1 = [ M_1 :M_0]^{-1} ( \delta^\star(S_0) -1). $$
In \cite{Shl22}, Shlyakhtenko shows that for subfactors of the form $M_0 \subset M_1 = M_0 \rtimes G$ with $G$ a finite abelian group, one has for a given $\varepsilon>0$ the existence of a generating sets $S_0$ for $M_0$ and $S_1$ for $M_1$ for which 
 $$ \delta^*(S_1)-1 \leq [M_1 : M_0 ]^{-1} (\delta^*(S_0) -1) + \varepsilon.$$
This is motivated by a conjecture that has the potential to resolve the free group factor isomorphism problem (see \cite{Shl22} Conjecture 1 and discussion following it). We remark that Example \ref{Ex:Fin_Gen_gp_Delta} is what one would expect if Shlyakhtenko's conjecture has a positive resolution. This paper motivated us to consider how $\sigma$, $\Delta$ behave under such crossed products. Consider a $*$-algebra $A$ that is globally invariant under $\alpha$, we define $A \rtimes_\alpha G$ to be the $*$-algebra generated by $A$ and the unitaries $\{ u_g : g\in G\}$ implementing $\alpha$ (see Section 1.2). We were able to obtain an analog of Schreier's formula for the quantities above with $A$ and $A \rtimes_\alpha G$:

\begin{cthm}A[{Theorem \ref{Thm:Free_Stein_Dim}}] \label{Thm:A}
Let $G\stackrel{\alpha}{\curvearrowright}(M,\tau)$ be a trace-preserving action of a finite group $G$ on a tracial von Neumann algebra and let $A\subset M$ be a finitely generated unital $*$-subalgebra which is globally invariant under $\alpha$. Then 
 $$\sigma( \C[G] \subset A \rtimes_\alpha G,\tau) = \frac{1}{|G|}\sigma(A,\tau).$$
Furthermore, we have
 $$\sigma(A \rtimes_\alpha G,\tau) -1 = \frac{1}{|G|}(\sigma(A,\tau)-1).$$
\end{cthm}

\begin{cthm}B[{Theorem \ref{Thm:Anot_subs}}]\label{Thm:B}
Let $G\stackrel{\alpha}{\curvearrowright}(M,\tau)$ be a trace-preserving action of a finite group $G$ on a tracial von Neumann algebra and let $A\subset M$ be a finitely generated unital $*$-subalgebra which is globally invariant under $\alpha$. Then
 $$\dim \overline{\emph{Der}_c( \C[G] \subset A \rtimes_\alpha G,\tau )}_{((A\rtimes_\alpha G) \otimes (A\rtimes_\alpha G)^\circ)''} = \frac{1}{|G|}\dim\overline{\emph{Der}_c(A,\tau)}_{(A \otimes A^\circ)''}.$$
Furthermore, we have
 $$\dim \overline{\emph{Der}_c(A \rtimes_\alpha G, \tau)}_{((A\rtimes_\alpha G) \otimes (A\rtimes_\alpha G)^\circ)''} -1 = \frac{1}{|G|}(\dim \overline{\emph{Der}_c(A,\tau)}_{(A \otimes A^\circ)''}-1).$$
\end{cthm} 

\begin{cthm}C[{Theorem \ref{Thm:Delta_Dim}}] \label{Thm:C}
Let $G\stackrel{\alpha}{\curvearrowright}(M,\tau)$ be a trace-preserving action of a finite abelian group $G$ on a tracial von Neumann algebra and let $A\subset M$ be a finitely generated unital $*$-subalgebra which is globally invariant under $\alpha$. Then 
 $$\Delta( \C[G] \subset A \rtimes_\alpha G,\tau) = \frac{1}{|G|}\Delta(A,\tau).$$
Furthermore, we have
 $$\Delta(A \rtimes_\alpha G,\tau) -1 = \frac{1}{|G|}(\Delta(A,\tau)-1).$$
\end{cthm}

To prove the above theorems, we first decompose the derivation space of $A \rtimes_\alpha G$ that vanishes on $\C[G]$ into a finite direct sum of twisted copies of the derivation space of $A$. Since each derivation on $A \rtimes_\alpha G$ can be decomposed into a derivation on $\C[G]$ and a derivation on $A \rtimes_\alpha G$ that vanishes on $\C[G]$, this gives a formula relating the von Neumann dimensions of the derivation space of $A$ and von Neumann dimension of the derivation space of $A \rtimes_\alpha G$. Furthermore, we show that both decompositions can be restricted to the subspace $\text{Der}_c(A,\tau)$ and the subspaces corresponding to $\sigma$ and $\Delta$, and that the derivations on $\C[G]$ are equal to the closure of the restricted subspace on $\C[G]$. This yields the formulas in the above theorems. Using the known inequality, $\delta_0 \leq \Delta$ (see \cite[Corollary 4.6]{CS05}), we can obtain a sharper bound on the microstates free entropy dimension which is independent of the choice set of generators on $A \rtimes_\alpha G$.

In Section 1 after some preliminaries, we state a generalized version of \cite[Theorem 2.1]{CN22}(see Lemma \ref{Lem:Decomp}), which uses the same proof. In Section 2 we show how one can extend a derivation on $A$ to a derivation on $A \rtimes_\alpha G$ that vanishes on $\C[G]$ and restrict a derivation on $A \rtimes_\alpha G$ that vanishes on $\C[G]$ to a derivation on $A$. From this we decompose the derivation space of $A \rtimes_\alpha G$ that vanishes on $\C[G]$ to a finite direct sum of the derivation space of $A$ as $(A \otimes A^\circ)''$-modules. In Section 3 we state and prove Theorem A. In Section 4 we state and prove Theorem B. In Section 5 we state and prove Theorem C. We relate $\Delta$ with a subspace of $\text{Der}(A,\tau)$ coming from defining a locally convex structure to $\text{Der}(A,\tau)$. We recover results in \cite{Shl22} and obtain a sharper bound on microstates free entropy dimension $\delta_0$.

\section*{Acknowledgement} I would like to thank my advisor, Prof. Brent Nelson for the initial idea of this paper, and many helpful suggestions and discussions. I would like to thank Rolando de Santiago and Krishnendu Khan for pointing out Corollary 2.8. I would like to thank Srivatsav Kunnawalkam Elayavalli for referring me to the result of Higman, Neumann and Neumann.

%%%%%%%%%%%%%%%%%%%%%%%%%%%%%%%
   %%%   Preliminaries   %%%   
%%%%%%%%%%%%%%%%%%%%%%%%%%%%%%%
\section{Preliminaries}

%%%%%%%%%%%%%%%%%%%%%%%%%%%%%%%
        %%% Notation %%%
%%%%%%%%%%%%%%%%%%%%%%%%%%%%%%%
\subsection{Notation} 
Throughout, a tracial von Neumann algebra is a pair $(M,\tau)$ consisting of a finite von Neumann algebra $M$ with a choice of a faithful normal tracial state $\tau$. We denote by $L^2(M,\tau)$ the GNS Hilbert space corresponding to $\tau$ and identify $M$ with its representation on this space. We let $M^\circ = \{ x^\circ: x \in M\}$ denote the opposite von Neumann algebra, represented on $L^2(M^\circ, \tau^\circ)$ which can be identified with the conjugate Hilbert space of $L^2(M,\tau)$. We let $M\bar\otimes M^\circ$ denote the von Neumann algebra tensor product, which is equipped with the tensor product trace $\tau \otimes \tau^\circ$ and represented on $L^2(M\bar\otimes M^\circ, \tau \otimes \tau^\circ)$. 

Let $G\stackrel{\alpha}{\curvearrowright}M$ be a trace-preserving action of a finite group $G$. Recall that the crossed product $M \rtimes_\alpha G$ is the von Neumann algebra generated by $M$ and $\C[G]= \text{span}\{u_g: g\in G\}$ with the relations $\alpha_g(m)=u_g mu_g^*$ for $m \in M$. For each $b \in M \rtimes_\alpha G$, we have the unique representation 
 $$b = \sum_{g \in G}a_g u_g,$$
where $a_g \in M$, and the trace on $M$ extends to a trace on $M \rtimes_\alpha G$ via
 $$\tau(b)= \tau\left(\sum_{g \in G}a_g u_g\right) = \tau (a_e).$$
It follows that $L^2(M \rtimes_\alpha G,\tau) = \bigoplus_{g \in G} L^2(M,\tau)u_g$. We denote by $J_\tau$ and $J_{\tau \otimes \tau^\circ}$ the Tomita conjugation operators on $L^2(M \rtimes_\alpha G,\tau)$ and $L^2((M \rtimes_\alpha G)\otimes (M \rtimes_\alpha G)^\circ, \tau \otimes \tau^\circ)$ respectively. These are determined by $J_\tau(x) =x^*$ and $J_{\tau \otimes \tau^\circ}(a\otimes b)=a^*\otimes b^*$ for $x,a,b \in M \rtimes_\alpha G$. For $S \subset L^2(M \rtimes_\alpha G, \tau)$, we denote by $[S]$ the projection onto the closed span of $S$.  

Additionally, we have that $L^2(M \bar{\otimes} M^\circ,\tau \otimes \tau^\circ)$ is a $M$-$M$ bimodule with the actions define as follows
\begin{equation}\label{Eq:Right_Left_Action}
    x \cdot (a \otimes b^\circ) \cdot y = (xa) \otimes (by)^\circ= (x \otimes y^\circ) ( a \otimes b^\circ),
\end{equation} 
where $a,b,x,y \in M$. Notice that these actions commute with the right action implemented by $M \bar{\otimes}M^\circ.$ For a unital $*$-subalgebra $B \subset M$, we let $L^2_B(M\otimes M^\circ,\tau \otimes \tau^\circ)$ denote the subspace of $B$-central vectors.

%%%%%%%%%%%%%%%%%%%%%%%%%%%%%%%
   %%%   Finitely Generated %%%
   %%%      *-Algebra       %%%   
%%%%%%%%%%%%%%%%%%%%%%%%%%%%%%%
\subsection{Finitely Generated $*$-Algebra} Our primary interest will be finitely generated unital $*$-subalgebras $A \subset M$ that are globally invariant under $\alpha$. That is, there exists a finite self adjoint subset $X \subset M$ such that $A = \mathbb{C}\langle X\rangle$, where $\mathbb{C}\langle X\rangle$ is the unital $*$-algebra generated by $X$, and $\alpha_g (A)\subset A$ for all $g \in G$. Hence, if we define $A \rtimes_\alpha G : = \mathbb{C} \langle A, \{u_g: g \in G\}\rangle$, then every $b \in A \rtimes_\alpha G$ has the form $\sum_{g \in G}a_gu_g$ where $a_g \in A$. The von Neumann algebras generated by $A\rtimes_\alpha G$ and $(A\rtimes_\alpha G )\otimes (A\rtimes_\alpha G)^\circ$ will be denoted by $(A\rtimes_\alpha G)''$ and $((A\rtimes_\alpha G) \otimes (A\rtimes_\alpha G)^\circ)''$ respectively. Their $L^2$-closures will be denoted by $L^2(A\rtimes_\alpha G,\tau) \subset L^2( M\rtimes_\alpha G, \tau)$ and $L^2((A\rtimes_\alpha G) \otimes (A\rtimes_\alpha G)^\circ, \tau \otimes \tau^\circ) \subset L^2( (M\rtimes_\alpha G )\otimes (M\rtimes_\alpha G)^\circ,\tau \otimes \tau^\circ)$ respectively. By the decomposition on $L^2(A \rtimes_\alpha G)$ we have, 
 $$L^2( (A \rtimes_\alpha G) \otimes (A \rtimes_\alpha G)^\circ, \tau \otimes \tau^\circ)  = \bigoplus_{g,h \in G} L^2(A) u_g \otimes (L^2(A)u_h)^\circ= \bigoplus_{g,h \in G} L^2(A \otimes A^\circ) (u_g \otimes u_h^\circ).$$

\begin{rem}\label{Rem:Globally}
We note that we can always restrict to $A$ being globally invariant. Let $A \subset M$ be a finitely generated unital $*$-subalgebra. We claim that $\bigvee_{g \in G}\alpha_g(A)$ is a finitely generated unital $*$-subalgebra that is globally invariant, where $\bigvee_{g \in G}\alpha_g(A)$ is the $*$-algebra generated by $\alpha_g(A)$ for all $g \in G$. Since $G$ is finite, it suffices to show that for $A \subset M$ a finitely generated unital $*$-subalgebra, one has that $\alpha_g(A)$ is also a finitely generated unital $*$-subalgebra for all $g \in G$. This is true, since $\alpha_g(\C\langle X \rangle) = \C \langle \alpha_g(X)\rangle$. In particular, if $A$ is globally invariant with $A = \C \langle X\rangle$, then $\C\langle\alpha_g(X)\rangle = A$.
\end{rem}

%%%%%%%%%%%%%%%%%%%%%%%%%%%%%%%
   %%% Derivation spaces %%%
%%%%%%%%%%%%%%%%%%%%%%%%%%%%%%%
\subsection{Derivation Spaces} Let $A \subset M$ be a finitely generated unital $*$-subalgebra. We denote by $\text{Der}(A,\tau)$ the vector space of all derivations $d: A \to L^2(A \otimes A^\circ, \tau \otimes \tau^\circ)$.  Consider the following subspaces:
\begin{enumerate}
    \item[a)] $\text{Der}_{1 \otimes 1}(A,\tau):= \{ d \in \text{Der}(A,\tau):  1 \otimes 1^\circ \in \text{dom}(d^*)\},$ where $d$ is viewed as a densely defined operator \\ $L^2(A,\tau) \to L^2(A \otimes A^\circ, \tau \otimes \tau^\circ)$,
    \item[b)] $\text{Der}(B \subset A, \tau):= \{ d \in \text{Der}(A,\tau): d|_B \equiv 0\}$,  where $B$ is a unital $*$-subalgebra of $A$;
    \item[c] $\text{InnDer}(A,\tau):= \{ d \in \text{Der}(A,\tau): d =[ \,\cdot\, ,\xi]: \xi \in L^2(A\otimes A^\circ, \tau\otimes \tau^\circ)\}$, where $ [\,\,,\,]$ is the commutator.
\end{enumerate}
Note that for $d \in \text{Der}_{1 \otimes 1}(A,\tau)$, the condition $1 \otimes 1^\circ \in \text{dom}(d^*)$ implies $A \otimes A^\circ \subset \text{dom}(d^*)$, by the proof in Proposition 4.1 in\cite{Voi98}. Since $A \otimes A^\circ$ is dense in $L^2(A \otimes A^\circ,\tau \otimes \tau^\circ)$, one has that $d^*$ is densely defined. Thus, $d$ is a closable derivation. 

The right $(A \otimes A^\circ)''$-action on $L^2(A \otimes A^\circ, \tau \otimes \tau^\circ)$, coming from (\ref{Eq:Right_Left_Action}), induces a right $(A \otimes A^\circ)''$-action on $\text{Der}(A,\tau)$ defined as
 $$ (d \cdot m) (x)= d(x) m,$$
where $d \in \text{Der}(A,\tau)$ and $m \in (A \otimes A^\circ)''$. Thus $\text{Der}(A,\tau)$ is a $(A \otimes A^\circ)''$-module. We have that both $\text{Der}(B \subset A, \tau)$ and $\text{InnDer}(A,\tau)$ are $(A \otimes A^\circ)''$-submodules, while $\text{Der}_{1 \otimes 1}(A,\tau)$ is only $(A \otimes A^\circ)$-submodule. Then we have that $\overline{\text{Der}_{1 \otimes 1} (A,\tau)}$ is $(A \otimes A)''$-submodule.

For any closed $(A \otimes A^\circ)''$-invariant subspace $\mathcal{H} \leq \text{Der}( B \subset A ,\tau)$, $\phi_X(\mathcal{H})$ is a right Hilbert $(A\otimes A^\circ)''$-submodule of $L^2(A \otimes A^\circ,\tau \otimes \tau^\circ)^X$, since $\phi_X$ is a right $(A \otimes A^\circ)''$-linear. So one can compute its von Neumann dimension and by Lemma 1.2 in \cite{CN22}, this is independent of the generating set $X$. In \cite{CN22}, Charlesworth and Nelson defined the \textit{free Stein dimension} of $A$ with respect to $\tau$ as 
 $$ \sigma(A,\tau) := \dim\overline{\text{Der}_{1 \otimes 1}(A,\tau)}_{(A\otimes A^\circ)''}.$$ 
They defined the following map 
    \begin{align*}
        \phi_X: \text{Der}(A,\tau) &\to L^2(A \otimes A^\circ,\tau\otimes \tau^\circ)^X, \\
        d &\mapsto (d(x))_{x \in X},
    \end{align*}
where $A = \C\langle X \rangle$. They showed that the map was an injective, right $(A \otimes A^\circ)''$-linear and the image is closed in $L^2(A \otimes A^\circ,\tau\otimes \tau^\circ)^X$ (see \cite[Lemma 1.1]{CN22}). This allows us to define an inner product on $\text{Der}(A,\tau)$, given by
 $$ \langle d_1, d_2 \rangle_X := \sum_{x \in X} \langle d_1(x), d_2(x)\rangle.$$
Then $\text{Der}(A,\tau)$ with the above inner product is a Hilbert space. The topology coming from this inner product is simply pointwise convergence. Notice that the inner product depends on the choice of generators, that is, if one changes the generating set, then one also changes the inner product. However, for two distinct generating sets $X$ and $X'$ of $A$, we have that $\| \cdot\|_X$ is norm equivalent to $\|\cdot\|_{X'}$. 

The following lemma is a generalization of Theorem 2.1 in \cite{CN22}.

\begin{lem}\label{Lem:Decomp} Let $\mathcal{H}$ be a ${(A \otimes A^\circ)''}$-invariant subspace of $\emph{Der}(A,\tau)$ containing $\emph{InnDer}(A,\tau)$. For a finite dimensional $B \subset A$, we have 
 $$ \dim \overline{\mathcal{H}}_{(A \otimes A^\circ)''} = \dim \emph{Der}(B ,\tau)_{(B \otimes B^\circ)''} + \dim [ \overline{\mathcal{H} \cap \emph{Der}(B\subset A,\tau)}]_{(A \otimes A^\circ)''}.$$
\end{lem}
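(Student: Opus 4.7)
The plan is to mirror the proof of Theorem 2.1 in \cite{CN22}, which handles the special case $\mathcal{H} = \text{Der}_{1\otimes 1}(A,\tau)$; the argument goes through essentially verbatim for any $(A\otimes A^\circ)''$-invariant subspace $\mathcal{H}$ containing $\text{InnDer}(A,\tau)$. The main idea is to produce a short exact sequence of right Hilbert $(A\otimes A^\circ)''$-modules via restriction to $B$ and then apply additivity of von Neumann dimension.

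First I would consider the restriction-to-$B$ map
$$\rho : \mathcal{H} \to \text{Der}(B, L^2(A\otimes A^\circ, \tau\otimes\tau^\circ)), \qquad d \mapsto d|_B,$$
where the codomain is the space of derivations from $B$ into the $B$-bimodule $L^2(A\otimes A^\circ, \tau\otimes\tau^\circ)$. This map is right $(A\otimes A^\circ)''$-linear, and its kernel is precisely $\mathcal{H}\cap \text{Der}(B\subset A,\tau)$ by definition. Because $B$ is finite dimensional, every $B$-derivation into a $B$-bimodule is inner, so any $\delta$ in the codomain has the form $\delta = [\,\cdot\,,\xi]$ for some $\xi \in L^2(A\otimes A^\circ,\tau\otimes\tau^\circ)$. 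The inner derivation $[\,\cdot\,,\xi]$ on $A$ belongs to $\text{InnDer}(A,\tau) \subset \mathcal{H}$ and restricts to $\delta$, so $\rho$ is surjective.

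The key identity to establish is the dimension equality
$$\dim \overline{\text{Der}(B, L^2(A\otimes A^\circ,\tau\otimes\tau^\circ))}_{(A\otimes A^\circ)''} = \dim \text{Der}(B,\tau)_{(B\otimes B^\circ)''}.$$
Using the parametrization $\delta \leftrightarrow [\xi]$, each side equals the von Neumann dimension of the quotient of $L^2(A\otimes A^\circ)$ (respectively $L^2(B\otimes B^\circ)$) by its subspace of $B$-central vectors. Since $B$ is finite dimensional, comparing these two dimensions reduces to a trace computation for the $B$-central projection, which can be carried out using a finite Pimsner--Popa-type basis for $A\otimes A^\circ$ over $B\otimes B^\circ$.

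Finally, taking $L^2$-closures yields the short exact sequence of right Hilbert $(A\otimes A^\circ)''$-modules
$$0 \to \overline{\mathcal{H}\cap \text{Der}(B\subset A,\tau)} \to \overline{\mathcal{H}} \xrightarrow{\overline{\rho}} \overline{\text{Der}(B, L^2(A\otimes A^\circ,\tau\otimes\tau^\circ))} \to 0,$$
and additivity of von Neumann dimension gives the claimed formula. The main obstacle is the dimension comparison above: matching the von Neumann dimension over $(A\otimes A^\circ)''$ with the one over $(B\otimes B^\circ)''$ is the only step that genuinely uses finite dimensionality of $B$, and must be verified carefully because dimensions over different algebras are not related by any purely formal manipulation. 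Everything else is a direct transcription of the CN22 argument with $\text{Der}_{1\otimes 1}(A,\tau)$ replaced by the more general $\mathcal{H}$.
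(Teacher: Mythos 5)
Your strategy is essentially the paper's own argument (which is a rerun of \cite[Theorem 2.1]{CN22}) repackaged: where you form the quotient by the restriction map $\rho(d)=d|_B$ and invoke additivity of von Neumann dimension along a short exact sequence, the paper instead exhibits an explicit orthogonal complement of $\mathcal{H}\cap\text{Der}(B\subset A,\tau)$ inside $\mathcal{H}$, namely $\{[\,\cdot\,,\xi]:\xi\in L^2_B(A\otimes A^\circ,\tau\otimes\tau^\circ)^\perp\}$. The key ingredients are identical: innerness of derivations of the finite-dimensional algebra $B$ (via \cite[Theorem 2.2]{Pet09}) plus the hypothesis $\text{InnDer}(A,\tau)\subset\mathcal{H}$ for surjectivity of $\rho$, and the trace of the $B$-central projection for the dimension of the quotient. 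On that last point you are overcomplicating slightly: no Pimsner--Popa basis is needed. The projection onto $B$-central vectors is left multiplication by the separability idempotent $p=\sum_i n_i^{-1}\sum_{j,k}e^{(i)}_{j,k}\otimes(e^{(i)}_{k,j})^\circ\in B\otimes B^\circ\subset(A\otimes A^\circ)''$, and $\tau\otimes\tau^\circ(1-p)=1-\sum_i\alpha_i^2/n_i^2$ is computed identically in $(A\otimes A^\circ)''$ and in $B\otimes B^\circ$ because the trace restricts; this single computation gives both sides of your ``key identity.''

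The one step you gloss over, and which is precisely where the paper does extra work, is exactness of your sequence \emph{after} taking closures. Additivity of von Neumann dimension applied to $\overline{\rho}:\overline{\mathcal{H}}\to\overline{\rho(\mathcal{H})}$ produces $\dim\overline{\mathcal{H}}=\dim\bigl(\ker\overline{\rho}\cap\overline{\mathcal{H}}\bigr)+\dim\overline{\rho(\mathcal{H})}$, and you need $\ker\overline{\rho}\cap\overline{\mathcal{H}}=\overline{\mathcal{H}\cap\text{Der}(B\subset A,\tau)}$; the inclusion $\supseteq$ is trivial but $\subseteq$ is not, since the closure of a kernel need not equal the kernel on the closure. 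To repair this you must show that the map $\eta\mapsto[\,\cdot\,,\eta]|_B$ is bounded below on $L^2_B(A\otimes A^\circ,\tau\otimes\tau^\circ)^\perp$ (true, because the averaging construction that makes $B$-derivations inner gives a bounded linear splitting), so that a net $d_\lambda\in\mathcal{H}$ converging pointwise to some $d$ with $d|_B=0$ can be corrected inside $\mathcal{H}\cap\text{Der}(B\subset A,\tau)$. The paper sidesteps this entirely by switching to the generating set $Y=\{exe':x\in X,\ e,e'\in E\}$, with respect to which $[\,\cdot\,,(1-p)\xi]$ is literally orthogonal to $\mathcal{H}\cap\text{Der}(B\subset A,\tau)$; the resulting orthogonal direct sum decomposition of $\mathcal{H}$ passes to closures with no further argument. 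So your proof is salvageable and morally the same, but as written the middle-term exactness after closure is a genuine gap.
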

\begin{proof}
Fix a finite self adjoint subset $X\subset A$ satisfying $A = \C\langle X \rangle$. Since $B\subset A$ is finite dimensional, we have
 $$ (B ,\tau) = \left( \sum^d_{i=1}M_{n_i}(\C), \sum^d_{i=1}\alpha_i\text{tr}_{n_i} \right)$$
for some $n_1, \ldots , n_d \in \N$ and $\alpha_1, \ldots , \alpha_d >0$ satisfying $\sum^d_{i=1}\alpha_i=1$. Let 
 $$ E := \{e_{j,k}^{(i)} : 1 \leq i \leq d, 1 \leq j,k \leq n_i \}$$
be a family of multi-matrix units for $B$. Then 
$$ p : = \sum^d_{i=1} \frac{1}{n_i} \sum^{n_i}_{j,k=1} e^{(i)}_{j,k} \otimes (e^{(i)}_{k,j})^\circ$$
is the projection from $L^2(A \otimes A^\circ, \tau \otimes \tau^\circ)$ onto the $B$-central vectors $L^2_B(A \otimes A^\circ, \tau \otimes \tau^\circ).$

Now, set $Y:= \{ exe': x \in X, e,e' \in E\} \subset A$. Since $1 \in \text{span}(E)$, one has that $\C\langle Y \rangle =A$. For $d \in \mathcal{H}\cap\text{Der}(B\subset A,\tau)$,
    \begin{align*}
        \sum_{y \in Y}y^*d(y) &= \sum_{\substack{x \in X \\ e,e'\in E}} ((e')^*x^*e^*)\cdot d(exe')\\
        &= \sum_{\substack{x \in X \\ e,e'\in E}} ((e')^*x^*e^*e ) \cdot d(x)\cdot e'\\
        &= \sum_{e'\in E} ((e')^*\otimes (e')^\circ )\sum_{\substack{x \in X \\ e\in E}} (x^*e^*e)\cdot d(x) \\
        &= \sum_{i =1}^d\sum_{j,k =1}^{n_i} (e^{(i)}_{j,k}\otimes (e^{(i)}_{k,j})^\circ )\sum_{\substack{x \in X \\ e\in E}} (x^*e^*e)\cdot d(x).
    \end{align*}
Thus $\sum_{y \in Y} y^*d(y)$ is $B$-central, when $d \in \mathcal{H}\cap\text{Der}(B\subset A,\tau)$. Similarly $\sum_{y \in Y} d(y)y^*$ is $B$-central. For any $\xi\in L^2(A \otimes A^\circ, \tau \otimes \tau^\circ)$ and $d \in \mathcal{H}\cap\text{Der}(B\subset A,\tau)$ we have,
    \begin{align*}
        \langle [\, \cdot \, , \xi], d \rangle_Y&= \sum_{y \in Y} \langle [y,\xi], d(y) \rangle\\
        &=\sum_{y \in Y} \langle \xi,[y^*,  d(y)] \rangle\\
        &=\sum_{y \in Y} \langle p \xi,[y^*,  d(y)] \rangle\\
        &= \langle [\, \cdot \,, p\xi], d \rangle_Y.
    \end{align*}
Hence, $[ \, \cdot \,, p\xi]$ is the projection of $[\,\cdot\, , \xi]$ onto $\mathcal{H}\cap\text{Der}(B\subset A,\tau)$. 

Since $B$ is finite dimensional, for each $d \in \mathcal{H}$ there exists $\xi \in L^2(A \otimes A^\circ ,\tau \otimes \tau^\circ)$ with $d|_{B} = [ \, \cdot\, , \xi] = [\, \cdot\, , (1-p)\xi]|_B$ (see \cite[Theorem 2.2]{Pet09}). Then 
 $$d = (d-[ \,\cdot\,, (1-p)\xi]) + [ \, \cdot \,, (1-p)\xi]$$
is an orthogonal decomposition with respect to $\langle \cdot, \cdot \rangle_Y$. Since $\text{InnDer}(A,\tau) \subset \mathcal{H}$, the first term is in $\mathcal{H}$. It follows that 
 $$ \mathcal{H} = \mathcal{H} \cap \text{Der}(B \subset A, \tau)\oplus \{ [\, \cdot \, , \xi] : \xi \in L^2_B(A \otimes A^\circ,\tau \otimes \tau^\circ)^\perp\}.$$
Since $L^2_B(A \otimes A^\circ, \tau \otimes \tau^\circ)^\perp \leq L^2_A(A \otimes A^\circ, \tau \otimes \tau^\circ)^\perp$, taking the von Neumann dimensions of their closure in the above orthogonal decomposition and using Lemma 1.4 in \cite{CN22}, one has
 $$\dim \overline{\mathcal{H}}_{(A \otimes A^\circ)''} = \dim [ \overline{\mathcal{H} \cap \text{Der}(B\subset A,\tau)}]_{(A \otimes A^\circ)''} + \dim L^2_B(A \otimes A^\circ, \tau \otimes \tau^\circ)^\perp_{(A \otimes A^\circ)''}.$$
Lastly, we have
    \begin{equation*}
        \dim L^2_B(A \otimes A^\circ, \tau \otimes \tau^\circ)^\perp_{(A \otimes A^\circ)''}= \tau \otimes \tau^\circ (1-p)= 1 - \sum^d_{i=1}\frac{\alpha^2_i}{n^2_i} =\dim \text{Der}(B ,\tau)_{(A \otimes A^\circ)''}. \qedhere
    \end{equation*}
\end{proof}

%%%%%%%%%%%%%%%%%%%%%%%%%%%%%%%%%%%%%%%%%%%%%%%%%%%%
    %%%  Section 2: Decomposing Derivations  %%%
%%%%%%%%%%%%%%%%%%%%%%%%%%%%%%%%%%%%%%%%%%%%%%%%%%%%
\section{Decomposing Derivations}    
In this section, we will show that the derivation space of $A \rtimes_\alpha G$ that vanishes on $\C[G]$ can be decomposed to a finite direct sum of the derivation space of $A$. To do so, we need to consider how to extend a derivation on $A$ to a derivation on $A \rtimes_\alpha G$, and how to restrict a derivation on $A \rtimes_\alpha G$ to a derivation on $A$. First, set $p_{g,h} := [ L^2(A\otimes A^\circ,\tau \otimes \tau^{\circ}) (u_g \otimes u_h^\circ)]$ for $g,h \in G$, which gives us the pairwise orthogonal family of projections $\{p_{h,g}\}_{g,h \in G}$. Since for each $g,h \in G$, $L^2(A\otimes A^\circ,\tau \otimes \tau^{\circ}) (u_g \otimes u_h^\circ)$ is invariant under $A \otimes A^\circ$, it follows that $p_{g,h} \in (A\otimes A^\circ)'$ for all $g,h \in G$.

\begin{lem}\label{Lem:rel_of_p_gh} Let $p_{g,h}$ be as above. For $g, h ,k, \ell \in G$:
    \begin{enumerate}
        \item[(1)] $ p_{g,h} (u_k \otimes u_\ell^\circ) = (u_k \otimes u_\ell^\circ) p_{k^{-1}g, h\ell^{-1}}$,
        \item[(2)] $J_{\tau \otimes \tau^\circ}p_{g,h} = p_{g^{-1},h^{-1}}J_{\tau \otimes \tau^\circ}$.
    \end{enumerate}
\end{lem}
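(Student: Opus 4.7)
The plan is to exploit the block decomposition
$$L^2\bigl((A\rtimes_\alpha G)\otimes (A\rtimes_\alpha G)^\circ,\tau\otimes\tau^\circ\bigr) = \bigoplus_{g,h\in G} H_{g,h}, \qquad H_{g,h}:=L^2(A)u_g\otimes (L^2(A)u_h)^\circ,$$
recorded in the preliminaries, so that $p_{g,h}$ is exactly the orthogonal projection onto $H_{g,h}$. Both identities then amount to showing that the operators in question permute the blocks $\{H_{g,h}\}$ according to the claimed rule; once this is established, the intertwining of the projections follows immediately from the orthogonality of the decomposition.

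For (1), I would compute the left action of $u_k\otimes u_\ell^\circ$ on a basic vector $au_g\otimes (bu_h)^\circ\in H_{g,h}$ (with $a,b\in A$). Using the crossed-product relation $u_k a = \alpha_k(a)u_k$ together with $u_\ell^\circ\cdot c^\circ = (c u_\ell)^\circ$ in the opposite algebra, this yields $\alpha_k(a)u_{kg}\otimes (b u_{h\ell})^\circ$, which lies in $H_{kg,h\ell}$ thanks to the global $\alpha$-invariance of $A$. Since $u_k\otimes u_\ell^\circ$ is unitary, applying the same computation to its adjoint shows that $H_{g,h}$ is mapped \emph{bijectively} onto $H_{kg,h\ell}$, hence $(u_k\otimes u_\ell^\circ)p_{g,h} = p_{kg,h\ell}(u_k\otimes u_\ell^\circ)$. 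Reindexing $g\mapsto k^{-1}g$ and $h\mapsto h\ell^{-1}$ gives the stated identity.

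For (2), I would use $J_{\tau\otimes\tau^\circ}(x\otimes y^\circ) = x^*\otimes (y^*)^\circ$ on the same type of basic vector, obtaining
$$J_{\tau\otimes\tau^\circ}\bigl(au_g\otimes (bu_h)^\circ\bigr) = u_g^*a^*\otimes (u_h^*b^*)^\circ = \alpha_{g^{-1}}(a^*)u_{g^{-1}}\otimes \bigl(\alpha_{h^{-1}}(b^*)u_{h^{-1}}\bigr)^\circ,$$
which again by $\alpha$-invariance of $A$ lies in $H_{g^{-1},h^{-1}}$. Thus $J_{\tau\otimes\tau^\circ}$ sends $H_{g,h}$ into $H_{g^{-1},h^{-1}}$; since $J_{\tau\otimes\tau^\circ}$ is an antiunitary involution this inclusion is forced to be an equality of images, and the desired intertwining $J_{\tau\otimes\tau^\circ}p_{g,h} = p_{g^{-1},h^{-1}}J_{\tau\otimes\tau^\circ}$ follows.

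Neither step poses a genuine obstacle; the only real subtlety is bookkeeping in the opposite algebra (reversing the order when passing $u_\ell^\circ$ through $(\,\cdot\,)^\circ$) and consistently invoking the $\alpha$-invariance of $A$ to keep the rewritten vectors inside the claimed block $L^2(A)u_{\bullet}\otimes (L^2(A)u_{\bullet})^\circ$ rather than merely inside the ambient tensor product over $M\rtimes_\alpha G$.
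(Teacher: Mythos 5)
Your proposal is correct and is essentially the paper's own argument: both rest on the computations $(u_k\otimes u_\ell^\circ)(xu_s\otimes(yu_r)^\circ)=\alpha_k(x)u_{ks}\otimes(yu_{r\ell})^\circ$ and $J_{\tau\otimes\tau^\circ}(xu_s\otimes(yu_r)^\circ)\in L^2(A)u_{s^{-1}}\otimes(L^2(A)u_{r^{-1}})^\circ$, with global $\alpha$-invariance keeping the coefficients in $A$. The paper simply verifies the two sides agree on basic vectors via Kronecker deltas, whereas you phrase the same fact as the operators permuting the blocks $H_{g,h}$ and deduce the intertwining from orthogonality; this is a cosmetic, not substantive, difference.
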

\begin{proof} 
    \begin{enumerate}
        \item[]
        \item[(1)] Let $s,r \in G$ and $x,y \in A$, then 
            \begin{align*}
                p_{g,h} (u_k \otimes u_\ell^\circ)(xu_s \otimes (y u_r)^\circ)&=p_{g,h}(\alpha_k(x) u_{ks} \otimes (yu_{ r \ell})^\circ)\\
                &=\delta_{g =ks}\delta_{h=r\ell} \alpha_k(x) u_{ks} \otimes (yu_{ r \ell})^\circ \\
                &=(u_k \otimes u_\ell^\circ)\delta_{g =ks}\delta_{h=r\ell}(x u_s \otimes (y u_r )^\circ)\\
                &=(u_k \otimes u_\ell^\circ)p_{k^{-1}g, h\ell^{-1}}(x u_s \otimes (y u_r )^\circ).
            \end{align*}
        \item[(2)] Let $s,r \in G$ and $x,y \in A$, then 
            \begin{align*}
                p_{g,h} J_{\tau\otimes \tau^\circ}(xu_s \otimes (y u_r)^\circ)&=p_{g,h}(\alpha_{s^{-1}}(x^*) u_{s}^* \otimes (\alpha_r(y)^*u_{r}^*)^\circ)\\
                &=\delta_{g =s^{-1}}\delta_{h=r^{-1}} (\alpha_{s^{-1}}(x^*) u_{s}^* \otimes (\alpha_r(y)^*u_{r}^*)^\circ)\\
                &=J_{\tau \otimes \tau^\circ}\delta_{g =s^{-1}}\delta_{h=r^{-1}}(x u_s \otimes (y u_r )^\circ)\\
                &=J_{\tau \otimes \tau^\circ}p_{g^{-1}, h^{-1}}(x u_s \otimes (y u_r )^\circ).\qedhere
            \end{align*}
    \end{enumerate} 
\end{proof}

The following lemma characterizes derivations on $A\rtimes_\alpha G$ that vanish on $\C[G]$ as derivations that satisfy a type of covariant condition.    

\begin{lem}\label{Lem:fin_dim_and_cov} Let $(M,\tau)$ be a tracial von Neumann algebra with a finitely generated unital $*$-subalgebra $A \subset M$. Let $\alpha$ be a trace-preserving action of a finite group $G$ on $(M, \tau)$ such that $A$ is globally invariant under $\alpha$. Suppose $D \in \emph{Der}(A\rtimes_\alpha G,\tau)$. Then $ D \in\emph{Der}(\C[G] \subset A \rtimes_\alpha G, \tau)$ if and only if $D$ satisfies $D(u_g b u_g^*) = u_g \cdot D(b) \cdot u_g^*$ for all $ g\in G$, $b \in A \rtimes_\alpha G$.
\end{lem}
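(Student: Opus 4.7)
The plan is to prove the two implications separately. The forward direction (if $D|_{\C[G]} \equiv 0$ then $D(u_g b u_g^*) = u_g \cdot D(b) \cdot u_g^*$) should follow from a direct application of the Leibniz rule: expanding $D(u_g b u_g^*)$ twice produces three terms, two of which carry a factor of $D(u_g)$ or $D(u_g^*)$ and hence vanish by hypothesis, leaving exactly $u_g \cdot D(b) \cdot u_g^*$.

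For the converse, suppose $D$ satisfies the covariance identity. Since $\C[G] = \text{span}\{u_g : g \in G\}$, it suffices to prove $D(u_g) = 0$ for every $g \in G$. First I would take $b = u_g$ in the covariance condition, which gives $D(u_g) = u_g \cdot D(u_g) \cdot u_g^*$; multiplying on the right by $u_g$ yields the bimodule commutation $u_g \cdot D(u_g) = D(u_g) \cdot u_g$. Since the left and right actions of $A \rtimes_\alpha G$ on $L^2((A\rtimes_\alpha G) \otimes (A\rtimes_\alpha G)^\circ,\tau\otimes \tau^\circ)$ commute, iteration yields $u_g^k \cdot D(u_g) = D(u_g) \cdot u_g^k$ for every $k \geq 0$. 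Now invoke the finiteness of $G$: let $n$ be the order of $g$, so $u_g^n = 1$ and hence $D(u_g^n) = D(1) = 0$. Expanding $D(u_g^n)$ by the Leibniz rule gives
\[
0 = D(u_g^n) = \sum_{k=0}^{n-1} u_g^k \cdot D(u_g) \cdot u_g^{n-1-k},
\]
and the commutation relation collapses every summand to $D(u_g) \cdot u_g^{n-1}$, producing $n \cdot D(u_g) \cdot u_g^{n-1} = 0$. Working over $\C$, this gives $D(u_g) \cdot u_g^{n-1} = 0$, and right-multiplication by $u_g$ finally yields $D(u_g) = 0$.

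There is no substantial obstacle. The one essential ingredient is the finiteness of $G$: it provides both the relation $u_g^n = 1$ and the integer coefficient $n$ that can be divided out, and the statement would fail without it. The remainder of the argument is simply careful bookkeeping with the commuting left and right bimodule actions on $L^2((A \rtimes_\alpha G) \otimes (A \rtimes_\alpha G)^\circ, \tau \otimes \tau^\circ)$.
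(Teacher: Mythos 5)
Your proof is correct, but the converse direction takes a genuinely different route from the paper. The paper invokes the fact that any derivation of the finite-dimensional algebra $\C[G]$ into a Hilbert bimodule is inner (citing Peterson), writes $D|_{\C[G]} = [\,\cdot\,,\xi]$, and then uses the covariance hypothesis together with an average over $G$ to show that $\xi$ may be taken $\C[G]$-central, whence $D|_{\C[G]}=0$. You instead avoid the innerness result entirely: specializing the covariance identity to $b=u_g$ gives $u_g\cdot D(u_g)=D(u_g)\cdot u_g$, and then the Leibniz expansion of $0=D(u_g^n)$ for $n$ the order of $g$ collapses, via the commuting left and right actions, to $n\,D(u_g)\cdot u_g^{n-1}=0$, so $D(u_g)=0$. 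Each step checks out (in particular $\xi\cdot u_g^{*}\cdot u_g=\xi$ and the commutation of the two actions are exactly the bimodule axioms the paper sets up in its Section 1.1). What your argument buys is self-containedness and a slightly weaker hypothesis --- it only uses that each $g\in G$ has finite order, not the full averaging over $G$ --- while the paper's argument is more in the spirit of the averaging techniques it reuses later (e.g.\ in Lemma \ref{Lem:InnDer_cov}), and its innerness-plus-centrality mechanism is the same one underlying Lemma \ref{Lem:Decomp}. Both are valid; yours is arguably the more elementary proof of this particular equivalence.
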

\begin{proof}
Suppose that $D \in \text{Der}(\C[G] \subset A \rtimes_\alpha G,\tau)$. Then by the Leibniz rule, we have the conclusion. Conversely, suppose $D$ satisfies the covariant condition above. Since $\C[G]$ is a finite dimensional, we have $D|_{\C[G]}$ is bounded and hence inner (see \cite[Theorem 2.2]{Pet09}), say implemented by $\xi \in L^2((A \rtimes_\alpha G)\otimes (A \rtimes_\alpha G)^\circ, \tau \otimes \tau^\circ)$. Then for $x \in \C[G]$ and $g \in G$,  
    \begin{align*}
        [x, \xi - u_g^*\xi u_g] &= [x, \xi] - [x, u_g^*\xi u_g]\\
        &=[x, \xi] - xu_g^* \xi u_g- u_g^*\xi u_gx\\
        &=[x,\xi] - u_g^*(u_gxu_g^* \xi - \xi u_gxu_g^*)u_g \\
        &= [x, \xi] - u_g^*[u_g x u_g^*, \xi] u_g\\
        &= 0.
    \end{align*}
The last equality comes from $D|_{\C[G]}$ being covariant. It follows that $\xi - u_g^*\xi u_g \in L^2_{\C[G]}((A \rtimes_\alpha G )\otimes (A \rtimes_\alpha G)^\circ, \tau \otimes \tau^\circ)$ for all $g \in G.$ Since
 $$\xi - \frac{1}{|G|} \sum_{g \in G}u_g^*\xi u_g =\frac{1}{|G|} \sum_{g\in G}(\xi -u_g^*\xi u_g) \in L^2_{\C[G]}((A \rtimes_\alpha G )\otimes (A \rtimes_\alpha G)^\circ, \tau \otimes \tau^\circ)$$
and $\sum u_g\xi u_g^* \in L^2_{\C[G]}((A \rtimes_\alpha G )\otimes (A \rtimes_\alpha G)^\circ, \tau \otimes \tau^\circ)$, one has $\xi \in L^2_{\C[G]}((A \rtimes_\alpha G)\otimes (A \rtimes_\alpha G)^\circ, \tau \otimes \tau^\circ).$ Hence, $D|_{\C[G]}=0 $.
\end{proof}

For $d \in \text{Der}(A,\tau)$, we view $d$ as valued in $L^2(A \otimes A^\circ,\tau \otimes \tau^\circ) (u_e \otimes u_e^\circ) \leq L^2((A \rtimes_\alpha G) \otimes (A \rtimes_\alpha G)^\circ,\tau \otimes \tau^\circ)$. One may hope to extend $d$ simply by setting $d(u_g)=0$ for all $g \in G$. The issue is that this extension may fail to be a derivation  on $A \rtimes_\alpha G$. Indeed, if it were, then by the Leibniz rule we would get
 $$ d( \alpha_g ( x) ) = d(u_gxu_g^*) = u_gd(x)u_g^*,$$
for $x \in A$, but this is never true since $d(a_g(x)), d(x) \in L^2(A \otimes A^\circ ,\tau \otimes \tau^\circ)(u_e \otimes u_e^\circ).$ We correct this by averaging over $u_g^* d(\alpha_g(x))u_g$.

\begin{lem}\label{Lem:Der_A_to_B}
For $d \in \emph{Der}(A,\tau)$ and $h \in G$, we define $d^h : A \rtimes_\alpha G \to L^2((A \rtimes_\alpha G) \otimes (A \rtimes_\alpha G)^\circ,\tau\otimes \tau^\circ)$ by
 $$ d^h \left(\sum_{k\in G} a_k u_k\right):= J_{\tau \otimes \tau^\circ}(u_e \otimes (u^*_h)^\circ)J_{\tau \otimes \tau^\circ} \sum_{k \in G}\sum_{g \in G} u_g^* \cdot d(\alpha_g(a_k))\cdot (u_gu_k),$$
where $a_k \in A$ for all $k \in G$. Then $d^h \in \emph{Der}(\C[G] \subset A \rtimes_\alpha G, \tau)$. Furthermore, $\{d^h\}_{h \in G}$ are orthogonal with respect to the inner product coming from any generating set $Y$ of $A \rtimes_\alpha G$ with the form $Y = X \cup \{u_g: g\in G\}$, where $A = \C\langle X\rangle$.
\end{lem}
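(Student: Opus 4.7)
The plan is to factor $d^h = V_h \circ \tilde{d}$, where
\[
\tilde{d}\Bigl(\sum_{k \in G} a_k u_k\Bigr) := \sum_{k \in G}\sum_{g \in G} u_g^* \cdot d(\alpha_g(a_k)) \cdot (u_g u_k),
\]
and $V_h := J_{\tau \otimes \tau^\circ}(u_e \otimes (u_h^*)^\circ) J_{\tau \otimes \tau^\circ}$. By Tomita theory, $V_h$ lies in the commutant of $(M \rtimes_\alpha G)\bar\otimes (M \rtimes_\alpha G)^\circ$, and hence commutes with both the left and right bimodule actions of $A \rtimes_\alpha G$ on $L^2((A\rtimes_\alpha G) \otimes (A \rtimes_\alpha G)^\circ, \tau\otimes\tau^\circ)$. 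Consequently, if $\tilde{d}$ is a derivation that vanishes on $\C[G]$, then so is $d^h$, giving $d^h \in \text{Der}(\C[G] \subset A \rtimes_\alpha G, \tau)$.

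To check the Leibniz rule for $\tilde{d}$, take $b_1 = \sum_k a_k u_k$ and $b_2 = \sum_\ell c_\ell u_\ell$, so that $b_1 b_2 = \sum_{k,\ell} a_k \alpha_k(c_\ell) u_{k\ell}$. Expanding and applying the Leibniz rule for $d$ to $d(\alpha_g(a_k)\alpha_{gk}(c_\ell))$ yields
\[
\tilde d(b_1 b_2) = \sum_{k,\ell,g} u_g^* \cdot \bigl[d(\alpha_g(a_k))\cdot \alpha_{gk}(c_\ell) + \alpha_g(a_k)\cdot d(\alpha_{gk}(c_\ell))\bigr] \cdot (u_g u_k u_\ell).
\]
The first half rearranges via the identity $\alpha_{gk}(c_\ell)\, u_g u_k u_\ell = u_{gk}c_\ell u_\ell = u_g u_k\, c_\ell u_\ell$ into $\tilde{d}(b_1) \cdot b_2$. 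For the second half, the identity $u_g^*\alpha_g(a_k) = a_k u_g^*$ together with the reindexing $g \mapsto g k^{-1}$ assembles the terms into $b_1 \cdot \tilde{d}(b_2)$. That $\tilde d$ vanishes on $\C[G]$ is immediate: for $b = u_{k_0}$ only $a_{k_0} = 1$ is nonzero, and $d(\alpha_g(1)) = d(1) = 0$ forces $\tilde{d}(u_{k_0}) = 0$.

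For the orthogonality claim, since $d^h(u_g) = 0$ for every $g, h \in G$, one has $\langle d^h, d^{h'}\rangle_Y = \sum_{x\in X} \langle d^h(x), d^{h'}(x)\rangle$. For each $x \in X$ and $g \in G$, the summand $u_g^* \cdot d(\alpha_g(x)) \cdot u_g$ lies in the range of $p_{g^{-1}, g}$; a short direct computation giving the formula $V_h(a \otimes b^\circ) = a \otimes (u_h b)^\circ$ (equivalently, a second application of Lemma~\ref{Lem:rel_of_p_gh}) shows that $V_h$ maps the range of $p_{s,r}$ into that of $p_{s, hr}$. Hence the $g$-th summand of $d^h(x)$ lies in the range of $p_{g^{-1}, hg}$, and for $h \neq h'$ the projections $p_{g^{-1}, hg}$ and $p_{(g')^{-1}, h'g'}$ are pairwise orthogonal (they differ in at least one coordinate whenever $(g,h) \neq (g',h')$), so all cross pairings vanish. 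The main obstacle is the bookkeeping in the Leibniz verification; once the identity $\alpha_{gk}(c_\ell) u_g u_k u_\ell = u_g u_k c_\ell u_\ell$ is isolated, everything assembles cleanly.
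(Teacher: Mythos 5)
Your proof is correct and follows essentially the same route as the paper: the Leibniz rule is verified by pulling the commutant operator $J_{\tau\otimes\tau^\circ}(u_e\otimes(u_h^*)^\circ)J_{\tau\otimes\tau^\circ}$ past the bimodule actions and reindexing $g\mapsto gk^{-1}$ in the second sum, and orthogonality follows because the $g$-th summand of $d^h(x)$ lies in the range of $p_{g^{-1},hg}$ and these projections are pairwise orthogonal for $h\neq h'$. Your explicit factorization $d^h=V_h\circ\tilde d$ is a slightly cleaner packaging of the same argument, not a different method.
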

\begin{proof}
First we will show that $d^h$ is a derivation on $A \rtimes_\alpha G$ for $d \in \text{Der}(A,\tau)$ and $h \in G$. Using linearity of $d^h$, we can take $x,y \in A \rtimes_\alpha G$ such that $x =  au_r$ and $y = b u_s$, where $a,b \in A$ and $r,s \in G$. So, 
 $$d^h(xy)=  d^h(a \alpha_r(b) u_{rs})=J_{\tau \otimes \tau^\circ}(u_e \otimes (u^*_h)^\circ)J_{\tau \otimes \tau^\circ}\sum_{g \in G}  (u_g^* \otimes (u_{grs})^\circ)d(\alpha_g(a) \alpha_{gr}(b) ).$$
Then,
    \begin{align*}
        d^h(xy)&=J_{\tau \otimes \tau^\circ}(u_e \otimes (u^*_h)^\circ)J_{\tau \otimes \tau^\circ} \sum_{g \in G}  (u_g^* \otimes (u_{grs})^\circ)[\alpha_g(a) \cdot d(\alpha_{gr}(b) ) +d(\alpha_g(a) ) \cdot \alpha_{gr}(b) ] \\
        &=J_{\tau \otimes \tau^\circ}(u_e \otimes (u^*_h)^\circ)J_{\tau \otimes \tau^\circ} \sum_{g \in G}  [ (au_g^* \otimes (u_{grs})^\circ) d(\alpha_{gr}(b) ) +(u_g^* \otimes (u_{gr} b u_s)^\circ )d(\alpha_g(a) ) ] \\
        &=J_{\tau \otimes \tau^\circ}(u_e \otimes (u^*_h)^\circ)J_{\tau \otimes \tau^\circ} \sum_{g \in G}  [ (au_{gr^{-1}}^* \otimes (u_{gs})^\circ) d(\alpha_{g}(b))] +\sum_{g \in G}[(u_g^* \otimes (u_{gr} b u_s)^\circ )d(\alpha_g(a) ) ] \\
        &=J_{\tau \otimes \tau^\circ}(u_e \otimes (u^*_h)^\circ)J_{\tau \otimes \tau^\circ} \sum_{g \in G}  [ (xu_{g}^* \otimes (u_{gs})^\circ) d(\alpha_{g}(b) ) +(u_g^* \otimes (u_gu_r y)^\circ )d(\alpha_g(a) ) ],
    \end{align*}
and from $J_{\tau \otimes \tau^\circ}(u_e \otimes (u_h^*)^\circ)J_{\tau \otimes \tau^\circ} \in ((A \rtimes_\alpha G)\otimes (A\rtimes_\alpha G)^\circ)'$, we get
    \begin{align*}
        d^h(xy)&= (x \otimes (u_s)^\circ) d^h (b) +  (u_e \otimes (u_r y)^\circ )d^h(a)  \\
        &=x \cdot d^h(b) \cdot u_s + d^h(a) \cdot (u_r y)\\
        &=x \cdot d^h (y) +  d^h(x)\cdot y.
    \end{align*}
The last equality comes from $\delta^h(au_s) = \delta^h(a)\cdot u_s$. Thus $d^h \in \text{Der}( \C[G] \subset A\rtimes_\alpha G,\tau)$, since $d(1)=0$. 

Lastly, we want to show that $\{d^h: h \in G\}$ are orthogonal with respect to the inner product coming from any generating set $Y$ of the form $Y = X \cup \{ u_g :g \in G\}$ satisfying $A= \C \langle X \rangle$. For $h,h' \in G$ with $h \neq h'$, we use that $d^h$ vanishes on $\C[G]$ and $\{p_{g,h}\}_{g,h \in G}$ is a pairwise orthogonal family, one has
    \begin{align*}
        \langle d^h, d^{h'}\rangle_Y&= \sum_{y \in Y} \langle d^h(y), d^{h'}(y)\rangle\\
        &= \sum_{x \in X} \langle d^h(x), d^{h'}(x)\rangle\\
        &= \sum_{x \in X} \sum_{g,k \in G}\langle p_{g^{-1}, hg} d^h(x), p_{k^{-1},h'k}d^{h'}(x)\rangle\\
        &= \sum_{x \in X} \sum_{g \in G}\langle p_{g^{-1}, hg} d^h(x), p_{g^{-1},h'g}d^{h'}(x)\rangle\\
        &=0.\qedhere
    \end{align*} 
\end{proof}

Next, for $D \in \text{Der}(A \rtimes_\alpha G,\tau)$, we are interested in obtaining a derivation on $A$ using $D$. 
Define 
 $$ D_{g,h} := J_{\tau \otimes \tau^\circ}(u_g \otimes u_h^\circ) J_{\tau \otimes \tau^\circ} p_{g,h}D|_A, $$
and if $g =e$, we write $D_h:= D_{e,h}$. By identifying $L^2(A \otimes A^\circ,\tau \otimes \tau^\circ)$ with $L^2(A \otimes A^\circ, \tau \otimes \tau^\circ)( u_e \otimes u_e^\circ)$, we view the map above as valued in $L^2(A \otimes A^\circ, \tau \otimes \tau^\circ)$. The following lemma shows that $D_{g,h}$ is a derivation on $A$.

\begin{lem}\label{Lem:Der_B_to_A} For $D \in \emph{Der}(A \rtimes_\alpha G, \tau) $ and $g,h \in G$, let $D_{g,h}$ be defined as above. Then $D_{g,h} \in \emph{Der}(A,\tau)$. 
\end{lem}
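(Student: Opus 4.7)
The plan is to verify two things: that $D_{g,h}$ is a well-defined map $A \to L^2(A \otimes A^\circ, \tau \otimes \tau^\circ)$, and that $D_{g,h}$ satisfies the Leibniz rule. Both steps rest on the observation that the two ``outer'' operators appearing in the definition of $D_{g,h}$---the projection $p_{g,h}$ and the conjugated operator $J_{\tau \otimes \tau^\circ}(u_g \otimes u_h^\circ) J_{\tau \otimes \tau^\circ}$---lie in $(A \otimes A^\circ)'$.

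For well-definedness, recall that $p_{g,h}$ projects onto $L^2(A \otimes A^\circ)(u_g \otimes u_h^\circ)$, so $p_{g,h}D(a) = \xi \cdot (u_g \otimes u_h^\circ)$ for some $\xi \in L^2(A \otimes A^\circ, \tau \otimes \tau^\circ)$. By Tomita theory, $J_{\tau \otimes \tau^\circ}(u_g \otimes u_h^\circ) J_{\tau \otimes \tau^\circ}$ acts as right multiplication by $(u_g \otimes u_h^\circ)^* = u_g^* \otimes (u_h^*)^\circ$, so applying it produces $\xi \cdot (u_e \otimes u_e^\circ) \in L^2(A \otimes A^\circ)(u_e \otimes u_e^\circ)$, which we identify with $L^2(A \otimes A^\circ, \tau \otimes \tau^\circ)$.

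For the Leibniz rule, note that $p_{g,h} \in (A \otimes A^\circ)'$ was remarked immediately before Lemma~\ref{Lem:rel_of_p_gh}, while $J_{\tau \otimes \tau^\circ}(u_g \otimes u_h^\circ) J_{\tau \otimes \tau^\circ} \in ((A \rtimes_\alpha G) \otimes (A \rtimes_\alpha G)^\circ)' \subseteq (A \otimes A^\circ)'$ by Tomita--Takesaki theory. Hence for $a, b \in A$, applying the Leibniz rule for $D$ and commuting both operators past the left action of $a$ and the right action of $b$,
\begin{align*}
D_{g,h}(ab) &= J_{\tau \otimes \tau^\circ}(u_g \otimes u_h^\circ) J_{\tau \otimes \tau^\circ}\, p_{g,h}\bigl(a \cdot D(b) + D(a) \cdot b\bigr) \\
&= a \cdot D_{g,h}(b) + D_{g,h}(a) \cdot b.
\end{align*}

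There is no significant obstacle; the main technical care lies in justifying that the identification $L^2(A \otimes A^\circ)(u_e \otimes u_e^\circ) \cong L^2(A \otimes A^\circ, \tau \otimes \tau^\circ)$ intertwines the $A$-$A$ bimodule structures, which is immediate from the orthogonal decomposition $L^2((A \rtimes_\alpha G) \otimes (A \rtimes_\alpha G)^\circ, \tau \otimes \tau^\circ) = \bigoplus_{g, h \in G} L^2(A \otimes A^\circ)(u_g \otimes u_h^\circ)$ noted in Section~1.2.
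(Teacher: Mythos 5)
Your proposal is correct and follows essentially the same route as the paper: the Leibniz rule is obtained by expanding $D(ab)$ and commuting $p_{g,h}$ and $J_{\tau\otimes\tau^\circ}(u_g\otimes u_h^\circ)J_{\tau\otimes\tau^\circ}$, both lying in $(A\otimes A^\circ)'$, past the $A$-$A$ bimodule actions (which are implemented by left multiplication by $A\otimes A^\circ$). Your additional explicit check that the values land in $L^2(A\otimes A^\circ,\tau\otimes\tau^\circ)(u_e\otimes u_e^\circ)$ is a point the paper disposes of by the identification stated just before the lemma, so the two arguments agree in substance.
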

\begin{proof}
For $D \in\text{Der}( A \rtimes_\alpha G, \tau)$ and $a ,b \in A$, we have 
 $$D_{g,h}(ab) =J_{\tau \otimes \tau^\circ} (u_g \otimes u_h^\circ) J_{\tau \otimes \tau^\circ} p_{g,h} D(ab)=J_{\tau \otimes \tau^\circ} (u_g \otimes u_h^\circ) J_{\tau \otimes \tau^\circ} p_{g,h} [ (u_e \otimes (bu_e)^\circ  )D(a)  + (au_e \otimes u_e^\circ)  D(b)],$$
and by $p_{g,h}, J_{\tau \otimes \tau^\circ}(u_g \otimes u_h^\circ)J_{\tau \otimes \tau^\circ}  \in (A\otimes A^\circ)'$, it follows that
    \begin{align*}
        D_{g,h}(ab)&=(u_e \otimes (bu_e)^\circ )J_{\tau \otimes \tau^\circ} (u_g \otimes u_h^\circ )J_{\tau \otimes \tau^\circ} p_{g,h} D(a)  + (au_e \otimes u_e^\circ)  J_{\tau \otimes \tau^\circ} (u_g \otimes u_h^\circ) J_{\tau \otimes \tau^\circ} p_{g,h}D(b)]\\
        &=D_{g,h}(a)\cdot b + a\cdot D_{g,h}(b).
    \end{align*}
Hence, $D_{g,h} \in \text{Der}(A,\tau)$.
\end{proof}

Given $d \in \text{Der}(A,\tau)$ and $m \in (A \otimes A^\circ)''$, one has $(d\cdot m)^h =d^h \cdot (1 \otimes \alpha_{h})(m)$. This occurs since $J_{\tau \otimes \tau^\circ}( u_e \otimes (u_h)^\circ) J_{\tau \otimes \tau^\circ}$ interacts with the right $(A \otimes A^\circ )''$-action. To accommodate this, we define $(\text{Der}(A,\tau))_{1 \otimes \alpha_h}$ to be the same module as $\text{Der}(A,\tau)$ but with a right action by $(A \otimes A^\circ)''$ defined as
 $$d  \cdot_h m = d \cdot (1 \otimes \alpha_h)(m),$$
where $d \in \text{Der}(A,\tau)$, $m \in (A \otimes A^\circ)''$ and the right hand side is the usual $(A \otimes A^\circ)''$-action on $\text{Der}(A,\tau)$. 

\begin{thm}\label{Thm:Bij_Map}
Let $(M,\tau)$ be a tracial von Neumann algebra with a finitely generated unital $*$-subalgebra $A \subset M$. Let $\alpha$ be a trace-preserving action of a finite group $G$ on $(M, \tau)$ such that $A$ is globally invariant under $\alpha$. Then 
    \begin{enumerate}
        \item[(1)] The map $(\emph{Der}(A,\tau))_{1\otimes \alpha_{h^{-1}}} \to \emph{Der}( \C[G] \subset A \rtimes_\alpha G, \tau)$ defined $ d \mapsto d^h$ is an injective, right $(A \otimes A^\circ)''$-linear bounded map, for all $h \in G$.
        \item[(2)] The map $\emph{Der}(\C[G] \subset A \rtimes_\alpha G, \tau)\to (\emph{Der}(A,\tau))_{1 \otimes \alpha_{h}}$ defined $ D \mapsto D_h$ is a surjective, right $(A \otimes A^\circ)''$-linear bounded map, for all $h \in G$.
        \item[(3)] The map $\bigoplus_{g \in G}(\emph{Der}(A,\tau) )_{1\otimes \alpha_{g^{-1}}}\to \emph{Der}(\C[G] \subset A \rtimes_\alpha G, \tau)$ defined $ (d_g)_{g\in G} \to \sum_{g \in G}(d_g)^g$ is an invertible, right $(A \otimes A^\circ)''$-linear bounded map, with the inverse being $D \mapsto (D_g)_{g \in G}$. Furthermore, we obtain 
         $$\dim \emph{Der}(\C[G] \subset A \rtimes_\alpha G,\tau)_{(A \otimes A^\circ)''} = |G|\dim \emph{Der}(A,\tau)_{(A \otimes A^\circ)''}.$$
    \end{enumerate}
\end{thm}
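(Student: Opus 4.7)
The plan is to verify parts (1) and (2) in parallel using the identity $(d \cdot m)^h = d^h \cdot (1 \otimes \alpha_h)(m)$ highlighted just before the theorem, and then deduce (3) from the two inversion identities $(d^h)_h = d$ and $\sum_{g \in G}(D_g)^g = D$. The Fourier-type decomposition $D|_A = \sum_{k,\ell \in G} p_{k,\ell} D|_A$ afforded by Lemma \ref{Lem:rel_of_p_gh}, together with the covariance condition characterized in Lemma \ref{Lem:fin_dim_and_cov}, will be the main structural inputs throughout.

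For part (1), right $(A\otimes A^\circ)''$-linearity is immediate from the boxed formula, since
$$(d \cdot_{h^{-1}} m)^h = (d \cdot (1 \otimes \alpha_{h^{-1}})(m))^h = d^h \cdot (1 \otimes \alpha_h)(1 \otimes \alpha_{h^{-1}})(m) = d^h \cdot m.$$
For boundedness I would fix a self-adjoint generating set $X$ of $A$ and take $Y = X \cup \{u_g : g \in G\}$ as a generating set of $A \rtimes_\alpha G$; then since $d^h$ vanishes on $\C[G]$, Lemma \ref{Lem:rel_of_p_gh} and the isometry of conjugation by the $u_g$ give $\|d^h\|_Y^2 = \sum_{g \in G} \|d\|_{\alpha_g(X)}^2$, which is norm-equivalent to $\|d\|_X^2$ because each $\alpha_g(X)$ is again a generating set of $A$ by global invariance (cf.\ Remark \ref{Rem:Globally}). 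Injectivity is deferred to (3). For part (2), boundedness is easier: $D_h$ is obtained from $D|_A$ by applying a projection and a unitary right multiplication, so $\|D_h\|_X \le \|D\|_Y$. Right linearity follows from the conjugation identity $(u_e \otimes u_h^{*\circ})^{-1} m (u_e \otimes u_h^{*\circ}) = (1 \otimes \alpha_{h^{-1}})(m)$ valid for $m \in (A \otimes A^\circ)''$, together with the fact that each $p_{e, h}$ commutes with the right $(A \otimes A^\circ)''$-action on $L^2$; surjectivity is also deferred to (3).

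For part (3), I would first verify $(d^h)_h = d$: only the $g = e$ summand of $d^h$ lies in $p_{e,h} L^2$, and the successive right multiplications by $(u_e \otimes u_h^\circ)$ and $(u_e \otimes u_h^{*\circ})$ cancel. Combined with the pairwise orthogonality of $\{d^g\}_{g \in G}$ from Lemma \ref{Lem:Der_A_to_B}, this yields $((d_g)^g)_h = \delta_{g, h} d_g$ and hence injectivity in (1). Next, to verify $\sum_{g \in G}(D_g)^g = D$, I would write $D^{(k,\ell)} := p_{k, \ell} D|_A$; then the covariance $D(\alpha_g(a)) = u_g D(a) u_g^*$ combined with Lemma \ref{Lem:rel_of_p_gh} yields $D^{(k, \ell)}(\alpha_g(a)) = u_g D^{(g^{-1}k,\, \ell g)}(a) u_g^*$, and unfolding the definition of $(D_g)^g$ gives $(D_g)^g(a) = \sum_{k \in G} D^{(k^{-1}, gk)}(a)$; summing over $g$ and using that $g \mapsto gk$ is a bijection of $G$ recovers $\sum_{(k, \ell) \in G \times G} D^{(k, \ell)}(a) = D(a)$ on $A$ (and both sides vanish on $\C[G]$), which also establishes surjectivity in (2). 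These two identities show that the maps in (3) are mutually inverse. The dimension formula then follows from additivity of von Neumann dimension over the direct sum, together with the standard fact that twisting a right Hilbert $(A \otimes A^\circ)''$-module by the trace-preserving automorphism $1 \otimes \alpha_{g^{-1}}$ leaves its dimension unchanged, giving the claimed $|G| \cdot \dim \text{Der}(A,\tau)_{(A \otimes A^\circ)''}$.

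The main obstacle will be the careful bookkeeping of Fourier supports $p_{k, \ell}$ under the various left and right multiplications by unitaries and the $J$-conjugations; in particular, establishing $\sum_g (D_g)^g = D$ requires combining the covariance from Lemma \ref{Lem:fin_dim_and_cov} with the combinatorial observation that, for each fixed $k$, the map $g \mapsto gk$ is a bijection of $G$, so the pairs $\{(k^{-1}, gk) : g \in G\}$ cover $\{(k^{-1}, \ell) : \ell \in G\}$ exactly once.
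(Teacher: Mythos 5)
Your proposal is correct and follows essentially the same route as the paper: the same twisted-module identity $(d\cdot m)^h = d^h\cdot(1\otimes\alpha_h)(m)$ for right linearity, the same norm computations $\|d^h\|_Y^2=\sum_{g\in G}\|d\|_{\alpha_g(X)}^2$ and $\|D_h\|_X\le\|D\|_Y$ for boundedness, and the same two inversion identities $(d^h)_h=d$ and $\sum_{g\in G}(D_g)^g=D$, the latter reducing via covariance and Lemma \ref{Lem:rel_of_p_gh} to the observation that $(g,k)\mapsto(k^{-1},gk)$ enumerates all Fourier components $p_{k,\ell}$ exactly once. Your bookkeeping with $D^{(k,\ell)}:=p_{k,\ell}D|_A$ is only a notational repackaging of the paper's computation with $p_{g^{-1},hg}$, and the concluding dimension count via additivity and trace-preservation of $1\otimes\alpha_{g^{-1}}$ is identical.
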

\begin{proof}
    \begin{enumerate}
        \item[]
        \item[(1)] For $d \in \text{Der}(A,\tau)_{1 \otimes \alpha_h}$ with $h \in G$, we have by Lemma~\ref{Lem:Der_A_to_B} that $ d^h \in \text{Der}( \C[G] \subset A \rtimes_\alpha G, \tau)$ and so the map has the correct range. Then, for $m \in (A\otimes A^\circ)''$ and $x \in A \rtimes_\alpha G$, one has 
            \begin{align*}
                [d\cdot_{h^{-1}} m]^h(x)&= J_{\tau \otimes \tau^\circ}(u_e \otimes (u^*_h)^\circ)J_{\tau \otimes \tau^\circ} \left[\sum_{g \in G}(u_g^*\otimes u_g^\circ) [d\cdot_{h^{-1}} m] (\alpha_g(x))\right]\\
                &= J_{\tau \otimes \tau^\circ}(u_e \otimes (u^*_h)^\circ)J_{\tau \otimes \tau^\circ} \left[\sum_{g \in G}(u_g^*\otimes u_g^\circ) J_{\tau \otimes \tau^\circ}[(1\otimes \alpha_{h^{-1}})(m^*)]J_{\tau \otimes \tau^\circ} d (\alpha_g(x))\right],
            \end{align*}
        and by $J_{\tau \otimes \tau^\circ}[(1\otimes \alpha_{h^{-1}})(m^*)]J_{\tau \otimes \tau^\circ} \in ((A \rtimes_\alpha G )\otimes (A \rtimes_\alpha G)^\circ)'$, we can further compute
            \begin{align*}
                [d\cdot_{h^{-1}} m]^h(x)&= J_{\tau \otimes \tau^\circ}m^*J_{\tau \otimes \tau^\circ} \left[J_{\tau \otimes \tau^\circ}(u_e \otimes (u^*_h)^\circ)J_{\tau \otimes \tau^\circ}\sum_{g \in G} (u_g^*\otimes u_g^\circ) d(\alpha_g(x) )\right]\\
            &=J_{\tau \otimes \tau^\circ}m^*J_{\tau \otimes \tau^\circ}d^h(x) \\
            &= (d^h \cdot m)(x).
            \end{align*}
        Hence, the map $d \mapsto d^h$ is right $(A \otimes A^\circ)''$-linear.

        Next, let $Y= X \cup \C[G]$, where $\C\langle X \rangle =A$ and so since $d^h|_{\C[G]} \equiv 0$, we have 
         $$\left \|d^h\right\|_Y^2=\sum_{y \in Y}  \langle d^h(y), d^h(y) \rangle= \sum_{x \in X} \sum_{g,r \in G} \langle  (u_g^*\otimes u_g^\circ )d(\alpha_g (x)), (u_r^*\otimes u_r^\circ) d(\alpha_r(x))\rangle,$$
        and since $d(a) \in L^2(A \otimes A^\circ,\tau \otimes \tau^\circ)(u_e \otimes u_e^\circ)$ for all $a \in A$, the computation above becomes
            \begin{align*}
                \left \|d^h\right\|_Y^2&= \sum_{x \in X} \sum_{g,r \in G}\delta_{g=r} \langle  d(\alpha_g(x)), d(\alpha_r(x))\rangle\\
                &=\sum_{x \in X} \sum_{g \in G} \langle d(\alpha_g(x)), d(\alpha_g(x))\rangle\\
                &= \sum_{g \in G} \| d\|^2_{\alpha_g(X)}\\
                &\leq C \|d\|^2_X.
            \end{align*}
        The inequality follows from $\| \cdot \|_X$ is norm equivalent to $\| \cdot \|_{\alpha_g(X)}$. Thus the map in $d \mapsto d^h$ is bounded.
    
        Lastly, since $d(\alpha_g(x)) \in L^2(A \otimes A^\circ,\tau \otimes \tau^\circ)(u_e \otimes (u_e)^\circ)$ for all $g \in G$ and $x \in A$, one has 
            \begin{align}\label{Eq:Injectivity}
                (d^h)_h(x) &= J_{\tau \otimes \tau^\circ}(u_e \otimes u_h^\circ)J_{\tau \otimes \tau^\circ}p_{e,h}\left[ J_{\tau \otimes \tau^\circ}(u_e \otimes (u^*_h)^\circ)J_{\tau \otimes \tau^\circ} \sum_{g \in G} (u_g^* \cdot d(\alpha_g(x)) \cdot u_g)\right]\nonumber\\
                &=J_{\tau \otimes \tau^\circ}(u_e \otimes u_h^\circ)J_{\tau \otimes \tau^\circ}\left[J_{\tau \otimes \tau^\circ}(u_e \otimes (u^*_h)^\circ)J_{\tau \otimes \tau^\circ}d(x)\right]\\
                &= d(x).\nonumber
            \end{align}
        Thus if $d^h=0$, it follows that $d=0$. Hence, the map $d \mapsto d^h$ is injective.
        \item[(2)] For $D \in \text{Der}(\C[G] \subset A \rtimes_\alpha G,\tau)$, we have by Lemma \ref{Lem:Der_B_to_A} that $D_h \in (\text{Der}(A,\tau))_{1\otimes \alpha_h}$, where $h \in G$. For $m  \in (A \otimes A^\circ)''$ and $x \in A$, one has
            \begin{align*}
                (D \cdot_h m )_h(x) &= J_{\tau \otimes \tau^\circ}( u_e \otimes u_h^\circ )J_{\tau \otimes \tau^\circ}[p_{e,h} (D \cdot_h m)(x)]\\
                &= J_{\tau \otimes \tau^\circ}( u_e \otimes u_h^\circ )J_{\tau \otimes \tau^\circ}p_{e,h} J_{\tau \otimes \tau^\circ} [(1 \otimes \alpha_h )(m^*)] J_{\tau \otimes \tau^\circ} D(x),
            \end{align*}
        and using Lemma \ref{Lem:rel_of_p_gh}(2), following the above computation we have
            \begin{align*}
                (D \cdot_h m )_h(x)&= J_{\tau \otimes \tau^\circ}( u_e \otimes u_h^\circ )J_{\tau \otimes \tau^\circ} J_{\tau \otimes \tau^\circ} [(1 \otimes \alpha_h )(m^*)] J_{\tau \otimes \tau^\circ} p_{e,h} D(x)\\
                &=(J_{\tau \otimes \tau^\circ}m^* J_{\tau \otimes \tau^\circ} )[ J_{\tau \otimes \tau^\circ}( u_e \otimes u_h^\circ )J_{\tau \otimes \tau^\circ}p_{e,h}D(x)]\\
                &=(J_{\tau \otimes \tau^\circ}m^* J_{\tau \otimes \tau^\circ} )[D_h(x)]\\
                &= (D_h \cdot m)(x).
            \end{align*}
        Hence, the map in $D \mapsto D_h$ is right $(A\otimes A^\circ)''$-linear.

        Next, let $Y= X \cup \C[G]$ satisfying $\C\langle X \rangle =A$ and so we have
            \begin{align*}
                \left \|D_h\right\|^2_X&=\sum_{x \in X}  \langle D_h(x), D_h(x) \rangle\\
                &= \sum_{x \in X}  \langle  p_{e,h}D(x), p_{e,h}D(x)\rangle\\
                &\leq \|p_{e,h}\|\sum_{x \in X}  \langle  D(x), D(x)\rangle\\
                &=\|D\|_Y^2.
            \end{align*}
        Note that the last equality comes from $D(u_g)=0$ for all $g \in G$. Hence the map in $D \mapsto D_h$ is a bounded map. 
    
        Lastly, lets show that the map $D \mapsto D_h$ is surjective. By Lemma \ref{Lem:Der_A_to_B}, we have $d^h \in \text{Der}(\C[G] \subset A \rtimes_\alpha G,\tau)$, whenever $d \in (\text{Der}(A,\tau))_{1 \otimes \alpha_h}$. By equation (\ref{Eq:Injectivity}) above, it follows that $(d^h)_h=d$.
        \item[(3)]We want to show that $D = \sum_{h\in G}(D_h)^h$ whenever $D \in \text{Der}(\C[G] \subset A \rtimes_\alpha G,\tau)$. For $a \in A$, one has
         $$\sum_{h \in G} (D_h)^h(a)= \sum_{h \in G} J_{\tau \otimes \tau^\circ} (u_e \otimes (u_h^*)^\circ)J_{\tau \otimes \tau^\circ} \sum_{g \in G} (u_g^*\otimes u_g^\circ)[J_{\tau \otimes \tau^\circ} (u_e \otimes u_h^\circ)J_{\tau \otimes \tau^\circ} p_{e,h} D(\alpha_g(a))],$$
        and by $ J_{\tau \otimes \tau^\circ} (u_e \otimes (u_h^*)^\circ)J_{\tau \otimes \tau^\circ} \in ((A\rtimes_\alpha G) \otimes (A\rtimes_\alpha G)^\circ)'$ and $D\in \text{Der}(\C[G] \subset A \rtimes_\alpha G,\tau),$ the above computation becomes
         $$ \sum_{h \in G} (D_h)^h(a)=\sum_{h \in G}  \sum_{g \in G} (u_g^*\otimes u_g^\circ)  p_{e,h} D(\alpha_g(a))=\sum_{h \in G}  \sum_{g \in G} (u_g^*\otimes u_g^\circ)  p_{e,h}  (u_g \otimes (u_g^*)^\circ) D(a).$$
        By Lemma \ref{Lem:rel_of_p_gh}, we futher compute to get
            \begin{align*}
                \sum_{h \in G} (D_h)^h(a)&=\sum_{h \in G}  \sum_{g \in G} (u_g^*\otimes u_g^\circ) (u_g \otimes (u_g^*)^\circ) p_{g^{-1},hg}  D(a)\\
                &=\sum_{h \in G}  \sum_{g \in G}  p_{g^{-1},hg}  D(a)\\
                &= D(a).    
            \end{align*}
        For $b = \sum_{k \in G} a_ku_k$, where $a_k \in A$, we use that $D$ and $\sum_{g \in G} (D_g)^g$ are both vanishing on $\C[G]$ to get
         $$D(b) = \sum_{k \in G} D(a_k)u_k=\sum_{k \in G} \sum_{g \in G} (D_g)^g(a_k)u_k = \sum_{g \in G} (D_g)^g(b).$$ 
    
        Next, we show that given $(d_g)_{g \in G} \in \bigoplus_{g \in G}(\text{Der}(A,\tau) )_{1\otimes \alpha_g}$, one has $\left(\sum_{g \in G} (d_g)^g\right)_h = d_h$. For $a \in A$, one gets
            \begin{align*}
                \left(\sum_{g \in G} (d_g)^g\right)_h(a)& = J_{\tau \otimes \tau^\circ} (u_e \otimes u_h^\circ)J_{\tau \otimes \tau^\circ}p_{e,h}\left[ \sum_{g \in G} J_{\tau \otimes \tau^\circ} (u_e \otimes (u_g^*)^\circ)J_{\tau \otimes \tau^\circ} \sum_{k \in G} (u^*_k \otimes u_k^\circ)d_g(\alpha_k(a))\right]\\
                &= (J_{\tau \otimes \tau^\circ} (u_e \otimes u_h^\circ)J_{\tau \otimes \tau^\circ})p_{e,h}\left[ \sum_{g \in G} \sum_{k \in G} (u^*_k \otimes (u_gu_k)^\circ)(1 \otimes \alpha_g)d_g(\alpha_k(a))\right],
            \end{align*}
        and using $d_g(a) \in L^2(A \otimes A^\circ)(u_e \otimes u_e^\circ)$ for all $g \in G$, the computation above becomes
         $$ \left(\sum_{g \in G} (d_g)^g\right)_h(a)= (J_{\tau \otimes \tau^\circ} (u_e \otimes u_h^\circ)J_{\tau \otimes \tau^\circ}) (u_e \otimes u_h^\circ) (1 \otimes \alpha_h)d_h(a)= d_h(a).$$
        Thus, we have the following right $(A \otimes A^\circ)''$-module isomorphism
         $$ \text{Der}(\C[G] \subset A \rtimes_\alpha G,\tau) \cong \bigoplus_{g \in G} (\text{Der}(A,\tau))_{1 \otimes \alpha_g},$$
        where the direct sum is with respect to the $\langle \cdot, \cdot \rangle_Y$, where $Y = X \cup \C[G]$ satisfying $ A= \C \langle X\rangle$.
        Taking the dimension, one have 
            \begin{align*}
                \dim( \text{Der}(\C[G] \subset A \rtimes_\alpha G,\tau))_{(A \otimes A^\circ)''} &=\dim \left(   \bigoplus_{g \in G} (\text{Der}(A,\tau))_{1 \otimes \alpha_g}\right)_{(A \otimes A^\circ)''}\\
                &=\sum_{g \in G} \dim((\text{Der}(A,\tau))_{1 \otimes \alpha_g})_{(A\otimes A^\circ)''}\\
                &=|G| \dim\text{Der}(A,\tau)_{(A\otimes A^\circ)''}.
            \end{align*} 
        The last equality comes from $1 \otimes \alpha_g$ being a trace-preserving automorphism on $(A \otimes A^\circ)''$. \qedhere
    \end{enumerate} 
\end{proof}

Using the formula obtained in Theorem \ref{Thm:Bij_Map}, we are able relate the dimension of the derivation space of $A\rtimes_\alpha G$ as a $((A\rtimes_\alpha G) \otimes (A\rtimes_\alpha G)^\circ)''$-module with the dimension of the derivation space of $A$ as a $(A \otimes A^\circ)''$-module. To do so, we decompose the derivation space of $A \rtimes_\alpha G$ using Lemma \ref{Lem:Decomp} with the finite dimensional subalgebra $\C[G]$. The following lemma computes the dimension of the derivation space of $\C[G]$. 

\begin{lem}\label{Lem:Der_C[G]}
For $G$ a finite group, we have that 
 $$ \dim \emph{Der}(\C[G], \tau)_{L(G) \otimes L(G)^\circ} = 1- \frac{1}{|G|},$$
where $L(G) = \C[G]$ is the group von Neumann algebra of $G$.
\end{lem}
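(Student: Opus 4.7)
The plan is to identify $\text{Der}(\C[G],\tau)$, as a right $L(G)\bar\otimes L(G)^\circ$-module, with the orthogonal complement of the $\C[G]$-central vectors in $L^2(\C[G]\otimes\C[G]^\circ,\tau\otimes\tau^\circ)$, and then to compute its dimension from a direct trace calculation on a central projection. The key input is that $\C[G]$ is finite dimensional, so every derivation on it is inner.

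First, since $\C[G]$ is finite dimensional, \cite[Theorem 2.2]{Pet09} (as already invoked in the proof of Lemma \ref{Lem:Decomp}) guarantees that every $d\in\text{Der}(\C[G],\tau)$ has the form $d=[\,\cdot\,,\xi]$ for some $\xi\in L^2(\C[G]\otimes\C[G]^\circ,\tau\otimes\tau^\circ)$. The assignment $\xi\mapsto[\,\cdot\,,\xi]$ is right $L(G)\bar\otimes L(G)^\circ$-linear and surjective, with kernel precisely the $\C[G]$-central subspace $L^2_{\C[G]}(\C[G]\otimes\C[G]^\circ,\tau\otimes\tau^\circ)$. Hence $\text{Der}(\C[G],\tau)$ is isomorphic, as a right $L(G)\bar\otimes L(G)^\circ$-module, to $L^2_{\C[G]}(\C[G]\otimes\C[G]^\circ,\tau\otimes\tau^\circ)^\perp$.

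Next, I use the Wedderburn decomposition $\C[G]\cong\bigoplus_{i=1}^{d}M_{n_i}(\C)$ indexed by the irreducible representations, which satisfies $\sum_{i=1}^{d}n_i^2=|G|$. The canonical group trace restricts on the $i$-th block as $\alpha_i\,\text{tr}_{n_i}$ with $\alpha_i=n_i^2/|G|$, since the minimal central projection onto the $i$-th block is $\frac{n_i}{|G|}\sum_{g\in G}\chi_i(g^{-1})u_g$, whose $\tau$-value is $n_i^2/|G|$.

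Finally, following the last line of the proof of Lemma \ref{Lem:Decomp} applied with $B=A=\C[G]$, the projection $p$ onto $L^2_{\C[G]}(\C[G]\otimes\C[G]^\circ)$ satisfies $\tau\otimes\tau^\circ(p)=\sum_{i=1}^{d}\alpha_i^2/n_i^2$. Substituting $\alpha_i=n_i^2/|G|$ gives
\begin{equation*}
\tau\otimes\tau^\circ(p)=\sum_{i=1}^{d}\frac{n_i^2}{|G|^2}=\frac{1}{|G|},
\end{equation*}
whence $\dim\text{Der}(\C[G],\tau)_{L(G)\otimes L(G)^\circ}=\tau\otimes\tau^\circ(1-p)=1-\frac{1}{|G|}$. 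No step presents a genuine obstacle: the innerness of derivations is handled exactly as in Lemma \ref{Lem:Decomp}, and the trace computation $\alpha_i=n_i^2/|G|$ is the standard Plancherel weight from finite group representation theory.
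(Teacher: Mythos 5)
Your proof is correct, and the first step (every derivation on the finite-dimensional algebra $\C[G]$ is inner, so $\text{Der}(\C[G],\tau)$ is right-module isomorphic to $L^2_{\C[G]}(\C[G]\otimes\C[G]^\circ,\tau\otimes\tau^\circ)^\perp$) coincides with the paper's, which invokes \cite[Lemma 1.4]{CN22} for the same reduction. Where you diverge is in the computation of $\dim L^2_{\C[G]}(\C[G]\otimes\C[G]^\circ,\tau\otimes\tau^\circ)$: the paper works entirely in the group basis $\{\delta_g\}$, identifies the central vectors as the span of $\sum_{k}\delta_{kh}\otimes\delta_{k^{-1}}^\circ$, exhibits an explicit orthonormal family $f_h$, and evaluates $\tau\otimes\tau^\circ$ of the resulting projection directly to get $1/|G|$. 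You instead pass to the Wedderburn decomposition $\C[G]\cong\bigoplus_i M_{n_i}(\C)$, compute the Plancherel weights $\alpha_i=n_i^2/|G|$ from the minimal central projections, and feed these into the formula $\tau\otimes\tau^\circ(p)=\sum_i\alpha_i^2/n_i^2$ already established in the proof of Lemma \ref{Lem:Decomp}, so that $\sum_i n_i^2=|G|$ finishes the calculation. Both computations are valid; yours has the advantage of reusing the machinery of Lemma \ref{Lem:Decomp} and making transparent that the answer is forced by the identity $\sum_i n_i^2=|G|$, while the paper's has the advantage of being self-contained and basis-explicit (the orthonormal family $f_h$ it constructs is not needed elsewhere, but the argument avoids any appeal to the representation theory of $G$). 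One small point worth making explicit in your write-up: the right $L(G)\bar\otimes L(G)^\circ$-linearity of $\xi\mapsto[\,\cdot\,,\xi]$ uses that the commutator is formed with the left/right $\C[G]$-actions of display (\ref{Eq:Right_Left_Action}), which commute with right multiplication by $(\C[G]\otimes\C[G]^\circ)''$; you assert the linearity without saying why, though it is routine.
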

\begin{proof}
Since $\C[G]$ is a finite dimensional, we have that any derivation $d$ is bounded and hence inner, so $\text{Der}(\C[G], \tau)=\text{InnDer}(\C[G], \tau)$. From \cite[Lemma 1.4]{CN22}, we have
 $$\dim \text{Der}(\C[G], \tau)_{L(G) \otimes L(G)^\circ}= 1-\dim L^2_{\C[G]}(\C[G] \otimes \C[G]^\circ,\tau \otimes \tau^\circ).$$
 Let the left regular representation be denoted by $\lambda$ and $\{\delta_g:g \in G\}$ be the usual basis for $L^2(\C[G],\tau)$. We claim that  
 $$L^2_{\C[G]}(\C[G] \otimes \C[G]^\circ,\tau \otimes \tau^\circ)= \text{span} \left\{\sum_{k \in G}\delta_{kh} \otimes \delta_{k^{-1}}^\circ : h \in G\right\}.$$
Indeed, for $g, h \in G$
    \begin{align*}
        \lambda(g) \cdot \left(\sum_{k \in G} \delta_{kh} \otimes \delta_{k^{-1}}^\circ\right) &= \sum_{k \in G} \delta_{gkh} \otimes \delta_{k^{-1}}^\circ\\
        &=\sum_{k \in G} \delta_{kh} \otimes \delta_{k^{-1}g}^\circ\\
        &= \left(\sum_{g \in G}\delta_{hk} \otimes \delta_{k^{-1}}^\circ\right) \cdot \lambda(g).
    \end{align*}
For $x = \sum_{g,h \in G} \alpha_{g,h} \delta_g \otimes \delta_h^\circ \in L^2_{\C[G]}(\C[G] \otimes \C[G]^\circ,\tau \otimes \tau^\circ)$, one has
 $$\alpha_{g,h} = \langle x, \delta_g \otimes \delta_h^\circ\rangle= \langle \lambda(k^{-1}) \cdot ( \lambda(k)\cdot x), \delta_g \otimes \delta_h^\circ\rangle= \langle \lambda(k^{-1}) \cdot  x\cdot \lambda(k), \delta_g \otimes \delta_h^\circ\rangle,$$
and using that we have a trace, the computation above becomes
 $$\alpha_{g,h} = \langle  x, \lambda(k) \cdot\delta_g \otimes \delta_h^\circ \cdot \lambda(k^{-1}) \rangle= \langle   x, \delta_{kg} \otimes \delta_{hk^{-1}}^\circ \rangle = \alpha_{kg, hk^{-1}}.$$
Hence, we have
    \begin{align*}
        L^2_{\C[G]}(\C[G] \otimes \C[G]^\circ,\tau \otimes \tau^\circ) &= \text{span} \left\{ \sum_{k \in G} \delta_{kg} \otimes \delta_{hk^{-1}}^\circ: g,h \in G\right\}\\
        &=\text{span} \left\{ \sum_{k \in G} \delta_{kg} \otimes \delta_{k^{-1}}^\circ: g \in G\right\}.
    \end{align*}
Now set $\displaystyle f_h : = \frac{1}{|G|^{1/2}} \sum_{k \in G} \delta_{kh} \otimes \delta_{k^{-1}}^\circ$, where $h \in G$. This is an orthonormal family, since
    \begin{align*}
        \langle f_h, f_{h'}\rangle &= \frac{1}{|G|} \sum_{k \in G} \sum_{g \in G} \langle \delta_{kh'} \otimes \delta_{k^{-1}}^\circ, \delta_{gh} \otimes \delta_{g^{-1}}^\circ\rangle\\
        &=\frac{1}{|G|} \sum_{k \in G} \langle \delta_{kh}, \delta_{kh'}\rangle\\
        &= \delta_{h=h'}.
    \end{align*}
Thus we have $[L^2_{\C[G]}(\C[G] \otimes \C[G]^\circ,\tau \otimes \tau^\circ)]= \sum_{h \in G} \langle \cdot , f_h \rangle f_h \in L(G) \otimes L(G)^\circ$. Hence,
    \begin{align*}
        \dim L^2_{\C[G]}(\C[G] \otimes \C[G]^\circ,\tau \otimes \tau^\circ)&= \tau\otimes \tau^\circ\left(\sum_{h \in G} \langle \cdot , f_h \rangle f_h \right)\\
        &= \sum_{ h \in G} \langle (\langle \delta_e \otimes \delta_e^\circ, f_h\rangle) f_h, \delta_e \otimes \delta_e^\circ \rangle\\
        &= \langle \delta_e \otimes \delta_e^\circ, f_e\rangle \langle  f_e, \delta_e \otimes \delta_e^\circ \rangle\\
        &= \frac{1}{|G|}.\qedhere
    \end{align*}
\end{proof}

\begin{cor}\label{Cor:Bij_Map_Formula}
Let $(M,\tau)$ be a tracial von Neumann algebra with a finitely generated unital $*$-subalgebra $A \subset M$. Let $\alpha$ be a trace-preserving action of a finite group $G$ on $(M, \tau)$ such that $A$ is globally invariant under $\alpha$. Then 
 $$\dim \emph{Der}( \C[G] \subset A \rtimes_\alpha G,\tau)_{((A\rtimes_\alpha G) \otimes (A\rtimes_\alpha G)^\circ)''} = \frac{1}{|G|}\dim \emph{Der}(A,\tau)_{(A \otimes A^\circ)''}.$$
Furthermore, one has
 $$\dim \emph{Der}( A \rtimes_\alpha G,\tau)_{((A\rtimes_\alpha G) \otimes (A\rtimes_\alpha G)^\circ)''} -1 = \frac{1}{|G|}(\dim \emph{Der}(A,\tau)_{(A \otimes A^\circ)''}-1).$$
\end{cor}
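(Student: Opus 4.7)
The plan is to derive both identities by combining the right-module isomorphism from Theorem \ref{Thm:Bij_Map}(3) with a change-of-algebra step, and then applying the general decomposition in Lemma \ref{Lem:Decomp} with the finite-dimensional subalgebra $\C[G]$.

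For the first identity, the key point is that $\text{Der}(\C[G] \subset A \rtimes_\alpha G,\tau)$ carries two compatible right module structures: one over $(A \otimes A^\circ)''$, which is the structure used in Theorem \ref{Thm:Bij_Map}(3), and one over $((A\rtimes_\alpha G)\otimes (A\rtimes_\alpha G)^\circ)''$, which is the structure appearing in the desired formula. Since $(A\rtimes_\alpha G)'' = A''\rtimes_\alpha G$ and a finite-group crossed product has Jones index $|G|$ over its base, the inclusion $(A \otimes A^\circ)'' \subset ((A\rtimes_\alpha G)\otimes (A\rtimes_\alpha G)^\circ)''$ has index $|G|^2$. This is transparent from the decomposition
$$L^2\bigl((A\rtimes_\alpha G)\otimes (A\rtimes_\alpha G)^\circ,\tau\otimes\tau^\circ\bigr) = \bigoplus_{g,h \in G} L^2(A \otimes A^\circ,\tau\otimes\tau^\circ)(u_g \otimes u_h^\circ)$$
exhibited in Section 1.2. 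It follows that for every right $((A\rtimes_\alpha G)\otimes (A\rtimes_\alpha G)^\circ)''$-module $H$ one has $\dim H_{(A \otimes A^\circ)''} = |G|^2 \dim H_{((A\rtimes_\alpha G)\otimes (A\rtimes_\alpha G)^\circ)''}$. Applying this to $H = \text{Der}(\C[G] \subset A\rtimes_\alpha G,\tau)$ and inserting the formula $\dim \text{Der}(\C[G] \subset A \rtimes_\alpha G,\tau)_{(A\otimes A^\circ)''} = |G| \dim \text{Der}(A,\tau)_{(A\otimes A^\circ)''}$ from Theorem \ref{Thm:Bij_Map}(3) yields the first identity after dividing by $|G|^2$.

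For the second identity, I apply Lemma \ref{Lem:Decomp} to $A \rtimes_\alpha G$ with $\mathcal{H} = \text{Der}(A \rtimes_\alpha G,\tau)$ and finite-dimensional subalgebra $B = \C[G]$. The hypothesis $\text{InnDer}(A\rtimes_\alpha G,\tau) \subset \mathcal{H}$ holds trivially, and the intersection $\mathcal{H} \cap \text{Der}(\C[G] \subset A\rtimes_\alpha G,\tau)$ is exactly $\text{Der}(\C[G] \subset A\rtimes_\alpha G,\tau)$, which is already closed in the derivation-space norm. The lemma then yields
$$\dim \text{Der}(A \rtimes_\alpha G,\tau)_{((A\rtimes_\alpha G)\otimes(A\rtimes_\alpha G)^\circ)''} = \dim \text{Der}(\C[G],\tau)_{(\C[G]\otimes\C[G]^\circ)''} + \dim \text{Der}(\C[G] \subset A \rtimes_\alpha G,\tau)_{((A\rtimes_\alpha G)\otimes(A\rtimes_\alpha G)^\circ)''}.$$
By Lemma \ref{Lem:Der_C[G]} the first term equals $1 - \tfrac{1}{|G|}$, and by the first identity (just proved) the second term equals $\tfrac{1}{|G|}\dim \text{Der}(A,\tau)_{(A\otimes A^\circ)''}$. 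Subtracting $1$ from both sides rearranges into the stated Schreier-type formula.

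The whole argument is largely bookkeeping once Theorem \ref{Thm:Bij_Map}(3) is in hand; the only step requiring some care is the change-of-algebra comparison in the first part, i.e.\ verifying that $[((A\rtimes_\alpha G)\otimes (A\rtimes_\alpha G)^\circ)'' : (A\otimes A^\circ)''] = |G|^2$ and that this index converts the two dimension functions by exactly that factor. This is standard once one knows that index multiplies under tensor products and that finite-group crossed products have index equal to the group order, and can equally well be checked concretely from the $(u_g \otimes u_h^\circ)$-decomposition displayed above.
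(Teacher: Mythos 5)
Your proposal is correct and follows essentially the same route as the paper: the first identity is obtained by converting the $(A\otimes A^\circ)''$-dimension from Theorem \ref{Thm:Bij_Map}(3) via the index $[((A\rtimes_\alpha G)\otimes(A\rtimes_\alpha G)^\circ)'':(A\otimes A^\circ)'']=|G|^2$, and the second by applying Lemma \ref{Lem:Decomp} with $B=\C[G]$ together with Lemma \ref{Lem:Der_C[G]}. The only difference is cosmetic: you spell out the index verification from the $(u_g\otimes u_h^\circ)$-decomposition, which the paper takes as given.
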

\begin{proof} Since $[(A\rtimes_\alpha G) \otimes (A\rtimes_\alpha G)^\circ) '' : (A\otimes A^\circ)'']=|G|^2$ and using the formula in Theorem \ref{Thm:Bij_Map}, 
    \begin{align*}
        \dim\text{Der}(\C[G] \subset A \rtimes_\alpha G,\tau)_{((A \rtimes_\alpha G) \otimes (A \rtimes_\alpha G)^\circ)''} &= \frac{1}{|G|^2}\dim\text{Der}(\C[G] \subset A \rtimes_\alpha G,\tau)_{(A \otimes A ^\circ)''} \\
        &= \frac{1}{|G|^2} (|G| \dim\text{Der}(A,\tau)_{(A \otimes A ^\circ)''} )\\
        &= \frac{1}{|G|}\dim \text{Der}(A,\tau)_{(A \otimes A^\circ)''}.
    \end{align*} 
Thus, using Lemma \ref{Lem:Decomp}, Lemma \ref{Lem:Der_C[G]} and the above, one has
    \begin{align*}
        \dim\text{Der}(A \rtimes_\alpha G, \tau&)_{((A\rtimes_\alpha G) \otimes (A\rtimes_\alpha G)^\circ)''}\\
        &= \dim\text{Der}(\C[G], \tau)_{\C[G]\otimes \C[G]} + \dim\text{Der}(\C[G] \subset A \rtimes_\alpha G,\tau)_{((A \rtimes_\alpha G) \otimes (A \rtimes_\alpha G)^\circ)''}\\
        &= \left(1-\frac{1}{|G|}\right)+ \frac{1}{|G|}\dim \text{Der}(A,\tau)_{(A \otimes A^\circ)''}\\
        &= 1+\frac{1}{|G|} (\dim \text{Der}(A,\tau)_{(A \otimes A^\circ)''}-1).\qedhere
    \end{align*}
\end{proof}

\begin{cor}\label{Cor:Subgroup}
Let $(M,\tau)$ be a tracial von Neumann algebra with a finitely generated unital $*$-subalgebra $A \subset M$. Let $\alpha$ be a trace-preserving action of a finite group $G$ on $(M, \tau)$ such that $A$ is globally invariant under $\alpha$. If $H \subset G$ is a finite subgroup of $G$, then 
 $$\dim \emph{Der}( A \rtimes_\alpha G,\tau)_{((A\rtimes_\alpha G) \otimes (A\rtimes_\alpha G)^\circ)''} -1= \frac{1}{[G: H]}(\dim \emph{Der}( A\rtimes_\alpha H,\tau)_{((A\rtimes_\alpha H) \otimes (A\rtimes_\alpha H)^\circ)''}-1 ).$$
\end{cor}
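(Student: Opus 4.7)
The plan is to apply Corollary \ref{Cor:Bij_Map_Formula} twice, once for the full group $G$ and once for the subgroup $H$, and then combine the two identities. No further structural work is needed, so the proof should be short; the only observation to record is that the restricted action $\alpha|_H$ still makes $A$ globally invariant, so Corollary \ref{Cor:Bij_Map_Formula} applies to the pair $(A, H)$.

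First, since $A$ is globally invariant under $\alpha_g$ for every $g \in G$, it is in particular globally invariant under $\alpha_h$ for every $h \in H$. Therefore Corollary \ref{Cor:Bij_Map_Formula} applied to the $H$-action on $A$ yields
$$ \dim \text{Der}(A\rtimes_\alpha H,\tau)_{((A\rtimes_\alpha H)\otimes (A\rtimes_\alpha H)^\circ)''} - 1 = \frac{1}{|H|}\bigl(\dim \text{Der}(A,\tau)_{(A\otimes A^\circ)''} - 1\bigr),$$
and applied to the $G$-action it yields
$$ \dim \text{Der}(A\rtimes_\alpha G,\tau)_{((A\rtimes_\alpha G)\otimes (A\rtimes_\alpha G)^\circ)''} - 1 = \frac{1}{|G|}\bigl(\dim \text{Der}(A,\tau)_{(A\otimes A^\circ)''} - 1\bigr).$$

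Next, I would solve the first equation for $\dim \text{Der}(A,\tau)_{(A\otimes A^\circ)''} - 1$ and substitute into the second, obtaining
$$ \dim \text{Der}(A\rtimes_\alpha G,\tau)_{((A\rtimes_\alpha G)\otimes (A\rtimes_\alpha G)^\circ)''} - 1 = \frac{|H|}{|G|}\bigl(\dim \text{Der}(A\rtimes_\alpha H,\tau)_{((A\rtimes_\alpha H)\otimes (A\rtimes_\alpha H)^\circ)''} - 1\bigr).$$
Finally, using $|G|/|H| = [G:H]$ gives the stated formula.

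Since the proof is purely a substitution, there is no real obstacle: I do not need $H$ to be normal in $G$, nor do I need to interpret $A \rtimes_\alpha G$ as an iterated crossed product of $A \rtimes_\alpha H$ by some quotient; the $-1$ normalization absorbs the mismatch automatically, which is exactly the Schreier-type feature of the formula.
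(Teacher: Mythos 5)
Your proposal is correct and is essentially identical to the paper's own proof: Corollary \ref{Cor:Bij_Map_Formula} is applied to both $G$ and $H$, the common quantity $\dim \text{Der}(A,\tau)_{(A\otimes A^\circ)''}-1$ is eliminated, and $[G:H]=|G||H|^{-1}$ finishes the computation. Your explicit remark that $A$ remains globally invariant under the restricted $H$-action is a sensible (if routine) point that the paper leaves implicit.
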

\begin{proof}
From Corollary \ref{Cor:Bij_Map_Formula}, we have both 
 $$\dim \text{Der}( A \rtimes_\alpha G,\tau)_{((A\rtimes_\alpha G) \otimes (A\rtimes_\alpha G)^\circ)''} -1 = \frac{1}{|G|}(\dim \text{Der}(A,\tau)_{(A \otimes A^\circ)''}-1),$$
and
 $$\dim \text{Der}( A \rtimes_\alpha H,\tau)_{((A\rtimes_\alpha H) \otimes (A\rtimes_\alpha H)^\circ)''} -1 = \frac{1}{|H|}(\dim \text{Der}(A,\tau)_{(A \otimes A^\circ)''}-1).$$ 
Then 
    \begin{align*}
        \dim \text{Der}( A \rtimes_\alpha G,\tau)_{((A\rtimes_\alpha G) \otimes (A\rtimes_\alpha G)^\circ)''} -1 &= \frac{1}{|G|}(\dim \text{Der}(A,\tau)_{(A \otimes A^\circ)''}-1) \\
        &= \frac{|H|}{|G|} \left( \frac{1}{|H|} (\dim \text{Der}(A,\tau)_{(A \otimes A^\circ)''}-1) \right)\\    
        &= \frac{1}{[G:H]} (\dim \text{Der}( A \rtimes_\alpha H,\tau)_{((A\rtimes_\alpha H) \otimes (A\rtimes_\alpha H)^\circ)''} -1).
    \end{align*}
The last equality comes from using that $[G:H]= |G||H|^{-1}$.
\end{proof}

\begin{rem}
From \cite[Remark 1.6]{CN22}, we know that 
    \begin{align*}
        \beta^{(2)}_0(A,\tau) &= 1 - \dim\text{InnDer} (A,\tau)_{(A \otimes A^\circ)''},\\
        \beta^{(2)}_1(A,\tau) &= \dim\text{Der} (A,\tau)_{(A \otimes A^\circ)''} -\dim\text{InnDer} (A,\tau)_{(A \otimes A^\circ)''}.
    \end{align*}
Thus by Corollary \ref{Cor:Bij_Map_Formula}, we have 
 $$ \beta_1^{(2)}(A \rtimes_\alpha G,\tau) - \beta_0^{(2)}(A \rtimes_\alpha G,\tau) =\frac{1}{|G|} (\beta_1^{(2)}(A ,\tau) - \beta_0^{(2)}(A,\tau)), $$
where $(A,\tau)$ a tracial $*$-algebra and $G$ a finite group that acts on $A$.
\end{rem}

%%%%%%%%%%%%%%%%%%%%%%%%%%%%%%%%%%%%%%%%%%%%%%
    %%%  Section 3: Free Stein Dimension   %%%
%%%%%%%%%%%%%%%%%%%%%%%%%%%%%%%%%%%%%%%%%%%%%%
\section{Schreier's Formula for Free Stein Dimension}
For the reader's convenience, we state Theorem $A$.  
\begin{thm}[{Theorem \ref{Thm:A} }]\label{Thm:Free_Stein_Dim} 
Let $G\stackrel{\alpha}{\curvearrowright}(M,\tau)$ be a trace-preserving action of a finite group $G$ on a tracial von Neumann algebra and let $A\subset M$ be a finitely generated unital $*$-subalgebra which is globally invariant under $\alpha$. Then 
 $$\sigma( \C[G] \subset A \rtimes_\alpha G,\tau) = \frac{1}{|G|}\sigma(A,\tau).$$
Furthermore, we have
 $$\sigma(A \rtimes_\alpha G,\tau) -1 = \frac{1}{|G|}(\sigma(A,\tau)-1).$$
\end{thm}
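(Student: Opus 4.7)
The approach is to adapt the proof of Corollary \ref{Cor:Bij_Map_Formula} by restricting the decomposition of Theorem \ref{Thm:Bij_Map}(3) to closable derivations, and then invoking Lemma \ref{Lem:Decomp} with $\mathcal{H} = \overline{\text{Der}_{1\otimes 1}(A\rtimes_\alpha G, \tau)}$ and $B = \C[G]$. The first task is to verify that the extension map $d\mapsto d^h$ of Lemma \ref{Lem:Der_A_to_B} and the restriction map $D\mapsto D_h$ of Lemma \ref{Lem:Der_B_to_A} preserve the subspace $\text{Der}_{1\otimes 1}$. For the extension direction, one computes $\langle d^h(b), 1_{A\rtimes_\alpha G}\otimes 1_{A\rtimes_\alpha G}^\circ\rangle$ for $b = \sum_k a_ku_k \in A\rtimes_\alpha G$ using the explicit formula for $d^h$ and the orthogonal decomposition $L^2((A\rtimes_\alpha G)\otimes(A\rtimes_\alpha G)^\circ) = \bigoplus_{g,h} L^2(A\otimes A^\circ)(u_g\otimes u_h^\circ)$; only matching indices survive, so the pairing reduces to a finite sum of terms of the form $\langle d(\alpha_g(a_k)), 1_A\otimes 1_A^\circ\rangle$. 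This shows that $1_{A\rtimes_\alpha G}\otimes 1_{A\rtimes_\alpha G}^\circ \in \text{dom}((d^h)^*)$ if and only if $1_A\otimes 1_A^\circ\in \text{dom}(d^*)$. The restriction case follows the same pattern. Combined with the bounded right $(A\otimes A^\circ)''$-linearity established in Theorem \ref{Thm:Bij_Map}, the module isomorphism there restricts to
$$\text{Der}_{1\otimes 1}(\C[G]\subset A\rtimes_\alpha G,\tau) \cong \bigoplus_{g\in G} (\text{Der}_{1\otimes 1}(A,\tau))_{1\otimes\alpha_g}.$$

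Since the maps are bounded, this isomorphism descends to the pointwise-convergence closures. Using that $1\otimes \alpha_g$ is a trace-preserving automorphism of $(A\otimes A^\circ)''$, each summand has $(A\otimes A^\circ)''$-dimension equal to $\sigma(A,\tau)$, and dividing the total by the index $[((A\rtimes_\alpha G)\otimes (A\rtimes_\alpha G)^\circ)'':(A\otimes A^\circ)''] = |G|^2$ gives the first formula $\sigma(\C[G]\subset A\rtimes_\alpha G,\tau) = |G|^{-1}\sigma(A,\tau)$, exactly as in Corollary \ref{Cor:Bij_Map_Formula}.

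For the second formula, the module $\mathcal{H} = \overline{\text{Der}_{1\otimes 1}(A\rtimes_\alpha G, \tau)}$ is $((A\rtimes_\alpha G)\otimes(A\rtimes_\alpha G)^\circ)''$-invariant by the discussion in Section 1.3, and it contains $\text{InnDer}(A\rtimes_\alpha G,\tau)$ because inner derivations implemented by elements of $(A\rtimes_\alpha G)\otimes(A\rtimes_\alpha G)^\circ$ are bounded (hence lie in $\text{Der}_{1\otimes 1}$), while $L^2$-implemented inner derivations are pointwise limits of these bounded ones. I would then identify $\overline{\mathcal{H}\cap \text{Der}(\C[G]\subset A\rtimes_\alpha G,\tau)} = \overline{\text{Der}_{1\otimes 1}(\C[G]\subset A\rtimes_\alpha G,\tau)}$: the inclusion $\supset$ is immediate, and the reverse follows from applying the orthogonal decomposition in the proof of Lemma \ref{Lem:Decomp} to any approximating sequence in $\text{Der}_{1\otimes 1}(A\rtimes_\alpha G,\tau)$. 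Plugging in Lemma \ref{Lem:Der_C[G]} for the $\text{Der}(\C[G],\tau)$ term and the first formula for the other, Lemma \ref{Lem:Decomp} yields
$$\sigma(A\rtimes_\alpha G,\tau) = \left(1-\tfrac{1}{|G|}\right) + \tfrac{1}{|G|}\sigma(A,\tau) = 1 + \tfrac{1}{|G|}(\sigma(A,\tau) - 1).$$

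The main obstacle is the bookkeeping in the first step: carefully tracking the action of $J_{\tau\otimes\tau^\circ}(u_e\otimes u_h^{*\circ})J_{\tau\otimes\tau^\circ}$ through the adjoint computation in order to verify that $\text{Der}_{1\otimes 1}$ is preserved by both the extension and restriction operations. A secondary technical point is the identification of closures and intersections in the last step, but this is governed by the same inner-derivation decomposition that underlies Lemma \ref{Lem:Decomp}.
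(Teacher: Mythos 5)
Your proposal is correct and follows essentially the same route as the paper's proof: verifying that the extension map $d\mapsto d^h$ and restriction map $D\mapsto D_h$ preserve $\text{Der}_{1\otimes 1}$ via the same adjoint computation against $u_e\otimes u_e^\circ$ (yielding $(d^h)^*(1\otimes 1^\circ)=d^*(1\otimes 1^\circ)u_h^*$ and $(D_h)^*(1\otimes 1^\circ)=[L^2(A,\tau)]D^*(u_e\otimes u_h^\circ)$), restricting the module isomorphism of Theorem \ref{Thm:Bij_Map}(3), and then combining Lemma \ref{Lem:Decomp} with Lemma \ref{Lem:Der_C[G]} exactly as in Corollary \ref{Cor:Bij_Map_Formula}. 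The only cosmetic difference is that you spell out the identification $\overline{\mathcal{H}\cap\text{Der}(\C[G]\subset A\rtimes_\alpha G,\tau)}=\overline{\text{Der}_{1\otimes 1}(\C[G]\subset A\rtimes_\alpha G,\tau)}$ and the inclusion of inner derivations more explicitly than the paper does.
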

\begin{proof}
First, we show that $d^h \in \text{Der}_{1\otimes 1}(\C[G] \subset A \rtimes_\alpha G, \tau)$, whenever $d \in \text{Der}_{1 \otimes 1}(A,\tau)$. For $d \in \text{Der}_{1 \otimes 1}(A,\tau)$, we have  $d^h \in \text{Der}(\C[G] \subset A\rtimes_\alpha G)$ by Lemma \ref{Lem:Der_A_to_B}. Since $d \in \text{Der}_{1 \otimes 1}(A,\tau)$, we have $1 \otimes 1^\circ \in \text{dom}(d^*)$. For $b = \sum_{k \in G} a_ku_k \in A \rtimes_\alpha G$, we have 
    \begin{align*}
        \langle u_e \otimes u_e^\circ,d^h(b) \rangle &= \sum_{k \in G}\langle u_e \otimes u_e^\circ,d^h(a_ku_k) \rangle\\
        &= \sum_{k,g\in G} \langle u_e \otimes u_e^\circ , J_{\tau \otimes \tau^\circ}(u_e \otimes (u_h^*)^\circ )J_{\tau \otimes \tau^\circ}(u_g^* \otimes (u_gu_k)^\circ)d(\alpha_g(a_k))\rangle\\
        &= \sum_{k,g \in G} \langle u_g \otimes (u_{h^{-1}k^{-1}g^{-1}})^\circ, d(\alpha_g(a_k))\rangle\\
        &= \sum_{k,g \in G}\delta_{g =e}\delta_{h^{-1}k^{-1}g^{-1}=e}  \langle u_{g} \otimes (u_{h^{-1}k^{-1}g^{-1}})^\circ, d(\alpha_g(a_k))\rangle\\
        &= \langle u_e \otimes u_e^\circ, d(a_{h^{-1}})\rangle
    \end{align*}
and by the identification $1 \otimes 1^\circ =u_e\otimes u_e^\circ \in L^2(A \otimes A^\circ, \tau \otimes \tau^\circ)(u_e \otimes u_e^\circ)$, the above computation becomes
    \begin{align*}
        \langle u_e \otimes u_e^\circ,d^h(b) \rangle&= \langle d^*(1\otimes 1^\circ), a_{h^{-1}}\rangle\\
        &= \langle d^*(1\otimes 1^\circ)u_{h^{-1}}, a_{h^{-1}} u_{h^{-1}}\rangle\\
        &= \langle d^*(1\otimes 1^\circ)u_{h^{-1}}, b\rangle.
    \end{align*}
Hence, $(u_e \otimes u_e^\circ ) \in \text{dom}((d^h)^*)$ with $(d^h)^*(u_e \otimes u_e^\circ) = d^*(1 \otimes 1^\circ)u_h^*$, that is $d^h \in \text{Der}_{1 \otimes 1}(\C[G] \subset A\rtimes_\alpha G)$. Thus the map $d \mapsto d^h$ in Theorem \ref{Thm:Bij_Map} can be restricted to the following subspaces $\text{Der}_{1 \otimes 1}(\C[G] \subset A \rtimes_\alpha G, \tau)$ and $\text{Der}_{1 \otimes 1}(A,\tau)$.

Next, we show that $D_h \in \text{Der}_{1\otimes 1}(A, \tau)$, whenever $D \in \text{Der}_{1 \otimes 1}(\C[G] \subset A\rtimes_\alpha G,\tau)$. By Lemma \ref{Lem:Der_B_to_A}, we have $D_h \in \text{Der}(A,\tau)$, whenever $D \in \text{Der}_{1 \otimes 1}(\C[G] \subset A\rtimes_\alpha G)$. Since  $D \in \text{Der}_{1 \otimes 1}(\C[G] \subset A\rtimes_\alpha G)$, then $A \rtimes_\alpha G \otimes (A \rtimes_\alpha G)^\circ \subset \text{dom}(D^*)$. For  $a \in A$ and using the identification $1 \otimes 1^\circ =u_e\otimes u_e^\circ \in L^2(A \otimes A^\circ, \tau \otimes \tau^\circ)(u_e \otimes u_e^\circ), $ it follows that
    \begin{align*}
        \langle  1\otimes 1^\circ, D_h(a) \rangle &= \langle u_e \otimes u_e^\circ, D_h(a) \rangle\\
        &=\langle u_e \otimes u_e^\circ, J_{\tau \otimes \tau^\circ}(u_e \otimes u_h^\circ) J_{\tau \otimes \tau^\circ} p_{e,h}D(a) \rangle\\
        &= \langle u_e \otimes u_h^\circ,  p_{e,h}D(a) \rangle\\
        &= \langle D^*(u_e \otimes u_h^\circ),  a \rangle\\
        &=\langle [L^2(A,\tau)]  D^*(u_e \otimes u_h^\circ), a\rangle.
    \end{align*}
Hence we have $1 \otimes 1^\circ \in \text{dom}(D^*)_h$ with $(D_h)^*(1 \otimes 1^\circ) = [L^2(A,\tau)]D^*(u_e\otimes u_h^\circ)$, that is $D_h \in \text{Der}_{1 \otimes 1}(A,\tau)$. Thus the map $D \mapsto D_h$ in Theorem \ref{Thm:Bij_Map} can be restricted to the following subspaces $\text{Der}_{1 \otimes 1}(\C[G] \subset A \rtimes_\alpha G, \tau)$ and $\text{Der}_{1 \otimes 1}(A,\tau)$.

By applying the third map in Theorem \ref{Thm:Bij_Map}, one has the right $(A \otimes A^\circ)''$-module isomorphism
    $$ \text{Der}_{1 \otimes 1}(\C[G] \subset A \rtimes_\alpha G,\tau) \cong \bigoplus_{g \in G} (\text{Der}_{1 \otimes 1}(A,\tau))_{1 \otimes \alpha_g},$$
where the direct sum is with respect to the $\langle \cdot, \cdot \rangle_Y$, where $Y = X \cup \C[G]$ satisfying $ A= \C \langle X\rangle$.

Lastly, from Lemma \ref{Lem:Decomp} and $\overline{\text{Der}_{1 \otimes 1}(\C[G],\tau)}= \overline{\text{InnDer}(\C[G],\tau)}$, we can apply Corollary \ref{Cor:Bij_Map_Formula} to the following subspaces $\overline{\text{Der}_{1 \otimes 1} (A\rtimes_\alpha G,\tau))} \subset \text{Der}(A\rtimes_\alpha G,\tau)$ and $\overline{\text{Der}_{1 \otimes 1}(A,\tau))} \subset \text{Der}(A,\tau)$. Thus we get
    \begin{align*}
        \sigma( \C[G] \subset A \rtimes_\alpha G, \tau) &= \dim\overline{\text{Der}_{1 \otimes 1}(\C[G] \subset A \rtimes_\alpha G,\tau)}_{((A\rtimes_\alpha G) \otimes (A\rtimes_\alpha G)^\circ)''} \\
        &= \frac{1}{|G|}  \dim \overline{\text{Der}_{1 \otimes 1}(A,\tau)}_{(A \otimes A^\circ)''}\\
        &= \frac{1}{|G|} \sigma(A,\tau),
    \end{align*} 
and
    \begin{align*}
        \sigma(A \rtimes_\alpha G, \tau)-1 &= \dim\overline{\text{Der}_{1 \otimes 1}(A \rtimes_\alpha G,\tau)}_{((A\rtimes_\alpha G) \otimes (A\rtimes_\alpha G)^\circ)''} -1\\
        &= \frac{1}{|G|} ( \dim \overline{\text{Der}_{1 \otimes 1}(A,\tau)}_{(A \otimes A^\circ)''} -1)\\
        &= \frac{1}{|G|} (\sigma(A,\tau)-1). \qedhere
    \end{align*} 
\end{proof}

The proof of the following corollary is similar to the proof of Corollary \ref{Cor:Subgroup}.
\begin{cor}
Let $G\stackrel{\alpha}{\curvearrowright}(M,\tau)$ be a trace-preserving action of a finite group $G$ on a tracial von Neumann algebra and let $A\subset M$ be a finitely generated unital $*$-subalgebra which is globally invariant under $\alpha$. If $H \subset G$ is a finite subgroup of $G$, then 
 $$\sigma( A \rtimes_\alpha G,\tau)-1 = \frac{1}{[G: H]}( \sigma ( A\rtimes_\alpha H,\tau)-1).$$
\end{cor}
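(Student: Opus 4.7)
The plan is to mimic the proof of Corollary \ref{Cor:Subgroup} verbatim at the level of free Stein dimension, using Theorem \ref{Thm:Free_Stein_Dim} in place of Corollary \ref{Cor:Bij_Map_Formula}. The hypotheses on $G$ and $A$ transfer to $H$ without any work: since $A$ is globally invariant under $\alpha$, it is in particular globally invariant under the restricted action $\alpha|_H$, and $A \rtimes_{\alpha|_H} H$ makes sense as a finitely generated unital $*$-subalgebra of $M \rtimes_\alpha G$. So Theorem \ref{Thm:Free_Stein_Dim} is directly applicable to both the pair $(G,A)$ and the pair $(H,A)$.

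The key step is to write down both specializations of Theorem \ref{Thm:Free_Stein_Dim}. On the one hand,
\[
\sigma(A \rtimes_\alpha G,\tau) - 1 = \frac{1}{|G|}\bigl(\sigma(A,\tau)-1\bigr),
\]
and on the other hand,
\[
\sigma(A \rtimes_\alpha H,\tau) - 1 = \frac{1}{|H|}\bigl(\sigma(A,\tau)-1\bigr).
\]
Eliminating the common quantity $\sigma(A,\tau)-1$ from the first identity using the second, and invoking $[G:H] = |G|/|H|$, gives
\[
\sigma(A \rtimes_\alpha G,\tau) - 1 = \frac{|H|}{|G|} \cdot \frac{1}{|H|}\bigl(\sigma(A,\tau)-1\bigr) = \frac{1}{[G:H]}\bigl(\sigma(A \rtimes_\alpha H,\tau) - 1\bigr),
\]
which is the desired formula.

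There is no real obstacle here, since the work has already been done in Theorem \ref{Thm:Free_Stein_Dim}; the only thing to double-check is that $A$ remains finitely generated and globally invariant when we restrict $\alpha$ to $H$, which is immediate.
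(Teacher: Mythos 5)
Your proof is correct and follows exactly the route the paper intends: the paper states that this corollary is proved just like Corollary \ref{Cor:Subgroup}, namely by applying the ``furthermore'' formula of Theorem \ref{Thm:Free_Stein_Dim} to both $(G,A)$ and $(H,A)$ and eliminating $\sigma(A,\tau)-1$ via $[G:H]=|G|/|H|$. Your observation that global invariance under $\alpha$ passes to the restricted action of $H$ is the only hypothesis check needed, and you handle it correctly.
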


\begin{rem}
If we take the hypotheses of Theorem \ref{Thm:Free_Stein_Dim} and additionally the group action is trivial, then we have $A \rtimes_\alpha G \cong A \otimes \C[G]$ and we recover Theorem 2.5 in \cite{CN22}.
\end{rem}

Even though Theorem \ref{Thm:Free_Stein_Dim} is limited to finite groups, we can apply it to infinite groups with the assumption that $G$ has an abundance of subgroups. Such examples of groups exist since by \cite[Theorem IV]{HNN49} the authors show that any countable group can be embedded into a group that is generated by two elements. Consequently, one can consider a group $G$ with two generators such that $\bigoplus_{n \in \N} \Z/n\Z$ is embedded into $G$, and such a group satisfies the condition of the corollary below.

\begin{cor}\label{Cor:Many_Subgroups}
Let $(M, \tau)$ be a tracial von Neumann algebra with a finitely generated unital $*$-algebra $A \subset M$. Let $G$ be a finitely generated group such that for all $n \in \N$, there exists a subgroup $G_n$ with $ n \leq |G_n| < \infty.$ Let $\alpha$ be a trace-preserving action of group $G$ on $(M,\tau)$ and $A$ be globally invariant under $\alpha$. Then
$$ \sigma(A \rtimes_\alpha G,\tau)\leq \beta_1^{(2)}(G) -\beta_0^{(2)}(G) +1 .$$
\end{cor}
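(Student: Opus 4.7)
The plan is to apply Lemma~\ref{Lem:Decomp} with $B=\C[G_n]$ for a finite subgroup $G_n\subset G$ of size at least $n$, simultaneously to $A\rtimes_\alpha G$ and to $\C[G]$, and then send $n\to\infty$. First I would verify that $\text{InnDer}(A\rtimes_\alpha G,\tau)\subset\overline{\text{Der}_{1\otimes1}(A\rtimes_\alpha G,\tau)}$ so that Lemma~\ref{Lem:Decomp} applies: for $\xi=\sum_i a_i\otimes b_i^\circ$ in a dense subset of $L^2((A\rtimes G)\otimes(A\rtimes G)^\circ)$, direct computation gives $\langle 1\otimes 1^\circ,[x,\xi]\rangle=\tau(x\eta)$ for $\eta=\sum_i(a_i\tau(b_i)-b_i\tau(a_i))^*$, so $1\otimes 1^\circ$ lies in the domain of $[\,\cdot\,,\xi]^*$. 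Then, using Lemma~\ref{Lem:Der_C[G]} for the first term,
\[
\sigma(A\rtimes_\alpha G,\tau)=1-\frac{1}{|G_n|}+D_n,
\]
where $D_n:=\dim[\overline{\text{Der}_{1\otimes1}(\C[G_n]\subset A\rtimes_\alpha G,\tau)}]_{((A\rtimes G)\otimes(A\rtimes G)^\circ)''}$.

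The same reasoning applied with $\C[G]$ in place of $A\rtimes_\alpha G$ gives $\sigma(\C[G],\tau)=1-\tfrac{1}{|G_n|}+D_n^{\C[G]}$ for the corresponding quantity $D_n^{\C[G]}$. Since $\sigma\leq\dim\text{Der}$, the Remark at the end of Section~2 yields $\dim\text{Der}(\C[G],\tau)_{L(G)\otimes L(G)^\circ}=\beta^{(2)}_1(\C[G],\tau)-\beta^{(2)}_0(\C[G],\tau)+1$, which equals $\beta^{(2)}_1(G)-\beta^{(2)}_0(G)+1$ by the standard identification of the tracial algebra Betti numbers of $\C[G]$ with the $\ell^2$-Betti numbers of $G$. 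Hence
\[
D_n^{\C[G]}\leq\beta^{(2)}_1(G)-\beta^{(2)}_0(G)+\frac{1}{|G_n|}.
\]

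The heart of the argument is the inequality
\[
D_n\leq D_n^{\C[G]}+\frac{\sigma(A,\tau)}{|G_n|},
\]
which I would establish by adapting the decomposition in Theorem~\ref{Thm:Bij_Map}: since $G_n$ remains finite, the averaging sums in Lemmas~\ref{Lem:Der_A_to_B} and~\ref{Lem:Der_B_to_A} stay well-defined when restricted to $G_n\subset G$, so one can split each derivation in $\overline{\text{Der}_{1\otimes1}(\C[G_n]\subset A\rtimes_\alpha G,\tau)}$ into a $G_n$-equivariant derivation of $A$ (contributing at most $\sigma(A,\tau)/|G_n|$ via Theorem~\ref{Thm:Free_Stein_Dim} applied to $A\rtimes_\alpha G_n$) plus a derivation of $\C[G]$ vanishing on $\C[G_n]$ (contributing $D_n^{\C[G]}$). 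Ensuring this splitting interacts correctly with the closure and with the $((A\rtimes G)\otimes(A\rtimes G)^\circ)''$-module structure is the principal technical obstacle.

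Combining the three estimates:
\[
\sigma(A\rtimes_\alpha G,\tau)\leq\beta^{(2)}_1(G)-\beta^{(2)}_0(G)+1+\frac{\sigma(A,\tau)}{|G_n|}.
\]
Because $A$ is finitely generated, $\sigma(A,\tau)$ is finite (bounded by the number of generators of $A$), and the hypothesis $|G_n|\geq n$ lets us send $n\to\infty$, yielding the claimed bound.
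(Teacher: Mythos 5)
Your overall strategy matches the paper's: cut at the finite-dimensional subalgebra $\C[G_n]$ via Lemma~\ref{Lem:Decomp}, use the Schreier formula (Theorem~\ref{Thm:Free_Stein_Dim}) for $A\rtimes_\alpha G_n$, bound the group-algebra contribution by $\beta_1^{(2)}(G)-\beta_0^{(2)}(G)+1$, and let $n\to\infty$. The bookkeeping with $D_n$ and $D_n^{\C[G]}$ and the final limit are fine, and the observation that $\text{InnDer}\subset\overline{\text{Der}_{1\otimes1}}$ (needed to invoke Lemma~\ref{Lem:Decomp}) is correct.

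The gap is exactly the inequality you flag as the ``principal technical obstacle,'' namely $D_n\leq D_n^{\C[G]}+\sigma(A,\tau)/|G_n|$. This does not follow from the averaging constructions of Lemmas~\ref{Lem:Der_A_to_B} and~\ref{Lem:Der_B_to_A}: those decompose $\text{Der}(\C[G_n]\subset A\rtimes_\alpha G_n,\tau)$, i.e.\ derivations on the \emph{smaller} crossed product, and say nothing about how a derivation on all of $A\rtimes_\alpha G=(A\rtimes_\alpha G_n)\vee\C[G]$ vanishing on $\C[G_n]$ is controlled by its restrictions to $A\rtimes_\alpha G_n$ and to $\C[G]$. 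The restriction map $D\mapsto\bigl(D|_{A\rtimes_\alpha G_n},D|_{\C[G]}\bigr)$ is injective, but the restrictions still take values in $L^2\bigl((A\rtimes_\alpha G)\otimes(A\rtimes_\alpha G)^\circ\bigr)$ rather than in the $L^2$-spaces of the respective subalgebras, the two components are not orthogonal, and the relevant von Neumann dimensions are taken over different algebras; converting injectivity into the dimension inequality is precisely the content of the subadditivity result \cite[Corollary 2.13]{CN21}, which the paper invokes in the form $\sigma(\C[G_n]\subset A\rtimes_\alpha G,\tau)\leq\sigma(\C[G_n]\subset A\rtimes_\alpha G_n,\tau)+\sigma(\C[G_n]\subset\C[G],\tau)$. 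Without citing that result or reproving it, your argument is incomplete at its central step. A minor secondary point: the paper obtains $\sigma(\C[G],\tau)\leq\beta_1^{(2)}(G)-\beta_0^{(2)}(G)+1$ directly from \cite[Proposition 5.1]{CN21}, whereas you pass through $\sigma\leq\dim\text{Der}$ and the identification of $\beta_k^{(2)}(\C[G],\tau)$ with $\beta_k^{(2)}(G)$; that route is viable but the identification itself requires a citation (it is due to Connes and Shlyakhtenko).
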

\begin{proof}
Observe that $A \rtimes_\alpha G = ( A \rtimes_\alpha G_n) \vee \C[G]$ and by \cite[Corollary 2.13]{CN21}, we have
$$ \sigma(\C[G_n] \subset A \rtimes_\alpha G,\tau) \leq \sigma(\C[G_n] \subset A \rtimes_\alpha G_n,\tau) + \sigma( \C[G_n] \subset \C[G],\tau).$$
Then using Lemma \ref{Lem:Decomp} and the above inequality, the computation becomes
\begin{align*}
    \sigma(A \rtimes_\alpha G,\tau) &= \sigma(\C[G_n] \subset A \rtimes_\alpha G,\tau) + \sigma(\C[G_n],\tau)\\
    &\leq \sigma(\C[G_n] \subset A \rtimes_\alpha G_n,\tau) + \sigma( \C[G_n] \subset \C[G],\tau) + \sigma(\C[G_n],\tau).
\end{align*}
Now using Lemma \ref{Lem:Decomp} again, we can further establish
$$ \sigma(A \rtimes_\alpha G,\tau)= \sigma( A \rtimes_\alpha G_n,\tau) + \sigma(\C[G],\tau) - \sigma(\C[G_n],\tau). $$
By Theorem \ref{Thm:Free_Stein_Dim}, Proposition 5.1 in \cite{CN21} and Lemma \ref{Lem:Der_C[G]}, we have
\begin{align*}
\sigma(A \rtimes_\alpha G,\tau)  &\leq (1+\frac{1}{|G_n|}(\sigma( A,\tau) -1)) + \beta_1^{(2)}(G) - \beta_0^{(2)}(G) +1 -\left(1- \frac{1}{|G_n|}\right)\\
 &= \frac{1}{|G_n|} \sigma(A,\tau) +\beta_1^{(2)}(G) - \beta_0^{(2)}(G) +1.
\end{align*}
 Thus taking the limit as $n$ goes to infinity, we have
\begin{equation*}
    \sigma(A \rtimes_\alpha G,\tau)\leq \beta_1^{(2)}(G) - \beta_0^{(2)}(G) +1. \qedhere
\end{equation*}
\end{proof}

\begin{ex}\label{Ex:Fin_Gen_gp_Sigma}
Let $G$ be a countable abelian group such that for all $n \in \N$, there exists a subgroup $G_n$ with $ n \leq |G_n| < \infty.$ Then $L(G)$ is a separable abelian von Neumann algebra and there exists a finite self-adjoint set $Y \subset L(G)$ such that $L(G) = \C \langle Y\rangle''$. Let $G \stackrel{\alpha}{\curvearrowright}(M,\tau)$ be a trace-preserving action on a tracial von Neumann algebra. Suppose $A \subset M$ is a finitely generated unital $*$-subalgebra such that $  M =A''$ and $A$ is globally invariant under $\alpha$. For $G_n \subset G$ a finite subgroup, we have that $A\vee \C\langle Y\rangle  \subset (A \rtimes_\alpha G_n) \vee \C\langle Y \rangle$ and so $((A \rtimes_\alpha G_n) \vee \C\langle Y \rangle)'' = M \rtimes_\alpha G$. By Lemma \ref{Lem:Decomp} and \cite[Corollary 2.13]{CN21}, one gets 
\begin{align*}
    \sigma( (A\rtimes_\alpha G_n) \vee \C\langle Y\rangle, \tau ) &= \sigma( \C[G_n] \subset (A\rtimes_\alpha G_n) \vee \C\langle Y\rangle,\tau) + \sigma( \C[G_n], \tau)\\
    &\leq \sigma(\C[G_n] \subset A\rtimes_\alpha G_n,\tau) + \sigma(\C[G_n] \subset (\C[G_n] \vee \C\langle Y\rangle),\tau) + \sigma( \C[G_n], \tau).
\end{align*}
Using Lemma \ref{Lem:Decomp} again, one has
$$\sigma( (A\rtimes_\alpha G_n) \vee \C\langle Y\rangle, \tau ) = \sigma(A\rtimes_\alpha G_n,\tau) + \sigma( (\C[G_n] \vee\C\langle Y\rangle),\tau) -\sigma( \C[G_n], \tau),$$
and so by Theorem \ref{Thm:Free_Stein_Dim}, \cite[Corollary 3.3]{CN22} and Lemma \ref{Lem:Der_C[G]}, one has 
\begin{align*}
    \sigma( (A\rtimes_\alpha G_n) \vee \C\langle Y\rangle, \tau ) & \leq  (1 + \frac{1}{|G_n|}( \sigma(A,\tau) -1)) + 1 - \left(1 - \frac{1}{|G_n|}\right)\\
    &= 1 +\frac{1}{|G_n|} \sigma(A,\tau).
\end{align*} 
Hence, for all $\varepsilon>0$, $M \rtimes_\alpha G$ admits a dense $*$-subalgebra $B$ with $\sigma(B,\tau) \leq 1 + \varepsilon$. $\hfill\blacksquare$
\end{ex}

%%%%%%%%%%%%%%%%%%%%%%%%%%%%%%%%%%%%%%%%%%%%%%
    %%%  Section 4: Another Subspace  %%%
%%%%%%%%%%%%%%%%%%%%%%%%%%%%%%%%%%%%%%%%%%%%%%
\section{Schreier's Formula for $\dim\text{Der}_c(A.\tau)$}
We fix $A \subset M$ a finitely generated unital $*$-subalgebra with $A=\C\langle X\rangle$, where $X$ is a finite self-adjoint subset of $A$. In \cite{Shl09}, Shlyakhtenko considered the following subspace of the derivation space
$$\text{Der}_c(A ,\tau) := \{ d \in \text{Der}(A,\tau) : d^*(1 \otimes 1) \in A , d(x) \in A\otimes A^\circ \text{ for all }x\in X\}.$$
Notice that by definition, we have $\text{Der}_c(A,\tau) \subset \text{Der}_{1 \otimes 1}(A,\tau)$ and 
$$ \dim \overline{\text{Der}_c(A,\tau)} \leq \sigma(A,\tau).$$
We mention that if $\xi \in A \otimes A^\circ$, then the derivation $d (\cdot) :=[ \cdot, \xi]$ is in $\text{Der}_c(A,\tau),$
and so 
$$ \text{InnDer}(A,\tau) \subset \overline{\text{Der}_c(A,\tau)}.$$

The following result shows that we can apply Theorem \ref{Thm:Bij_Map} and Corollary \ref{Cor:Bij_Map_Formula} to this subspace, but first we remind the reader on notation of the free difference quotients and their connection to derivations on $A$. Let $T_X = \{ t_x: x \in X\}$ be a set of indeterminate variables equipped with the involution $t^*_x = t_{x^*}$. We denote by $\C\langle T_X \rangle$ the $*$-algebra formally spanned by elements of the form $t_{x_1} t_{x_2} \cdots t_{x_d}$, where $x_i \in X$ for $i =\{1, \ldots , d\}$. Let $\text{ev}_X:\C \langle T_X \rangle \to A$ be the $*$-homomorphism extended linearly by $t_{x_1} t_{x_2} \cdots t_{x_d} \mapsto x_1x_2 \cdots x_d$. Given a $p \in \C \langle T_X \rangle$, we write $p(X)$ for $\text{ev}_X(p)$. This map is surjective, and it is injective if and only if $X$ is algebraically free. The \textit{free difference quotients} are the derivations $\partial_x: \C \langle T_X \rangle \to \C \langle T_X \rangle \otimes \C \langle T_X \rangle^\circ$ defined by linearity and the conditions
\begin{align*}
    \partial_x(t_y) &=\delta_{x=y} 1 \otimes 1,\\
    \partial_x(pq) &=p\cdot\partial_x(q)+ \partial_x(p)\cdot q.
\end{align*}
Given $p,q\in \C \langle T_X\rangle$, we denote $(p \otimes q)(X) =p(X) \otimes q(X)$ and by the above, we have $(\partial_x(p))(X) \in A \otimes A^\circ$ for $p \in \mathbb{C}\langle T_X\rangle$. Given $d \in \text{Der}(A,\tau)$ and $p \in \C\langle T_x \rangle$, one has
\begin{equation}\label{Eq:Der_on_A}
    d(p)= \sum_{x \in X} \partial_x(p)(X) d(x).
\end{equation} 
Notice that to define a derivation on $A$, one can, using the right hand side of equation (\ref{Eq:Der_on_A}), define a derivation on $\C \langle T_X\rangle$ to $L^2(A \otimes A^\circ, \tau \otimes \tau^\circ)$ and check that the derivation factors through $A$.

\begin{thm}[{Theorem \ref{Thm:B} }]\label{Thm:Anot_subs}
Let $G\stackrel{\alpha}{\curvearrowright}(M,\tau)$ be a trace-preserving action of a finite group $G$ on a tracial von Neumann algebra and let $A\subset M$ be a finitely generated unital $*$-subalgebra which is globally invariant under $\alpha$. Then
 $$\dim \overline{\emph{Der}_c( \C[G] \subset A \rtimes_\alpha G,\tau )}_{((A\rtimes_\alpha G) \otimes (A\rtimes_\alpha G)^\circ)''} = \frac{1}{|G|}\dim\overline{\emph{Der}_c(A,\tau)}_{(A \otimes A^\circ)''}.$$
Furthermore, we have
 $$\dim \overline{\emph{Der}_c(A \rtimes_\alpha G, \tau)}_{((A\rtimes_\alpha G) \otimes (A\rtimes_\alpha G)^\circ)''} -1 = \frac{1}{|G|}(\dim \overline{\emph{Der}_c(A,\tau)}_{(A \otimes A^\circ)''}-1).$$
\end{thm}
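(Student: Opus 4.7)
The plan is to mirror the proof of Theorem \ref{Thm:Free_Stein_Dim}: verify that the module isomorphism from Theorem \ref{Thm:Bij_Map} restricts to the subspaces $\text{Der}_c$, take closures, and feed the result into Lemma \ref{Lem:Decomp} with $B = \C[G]$. The two displayed formulas then drop out exactly as in Corollary \ref{Cor:Bij_Map_Formula} and Theorem \ref{Thm:Free_Stein_Dim}.

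First I would check that $d \mapsto d^h$ sends $\text{Der}_c(A,\tau)$ into $\text{Der}_c(\C[G] \subset A \rtimes_\alpha G,\tau)$. With the generating set $Y = X \cup \{u_g : g \in G\}$ for $A \rtimes_\alpha G$, one has $d^h(u_g) = 0$; for $x \in X$, the formula defining $d^h(x)$ is a finite sum of terms of the form $(u_g^* \otimes u_g^\circ)\,d(\alpha_g(x))$, each lying in the algebraic tensor $(A \rtimes_\alpha G) \otimes (A \rtimes_\alpha G)^\circ$ because $\alpha_g(x) \in A$ and $d$ sends $A$ into $A \otimes A^\circ$ by the free difference quotient expansion. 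The adjoint condition is the computation already carried out inside the proof of Theorem \ref{Thm:Free_Stein_Dim}: $(d^h)^*(1 \otimes 1^\circ) = d^*(1 \otimes 1^\circ)\, u_h^* \in A u_h^* \subset A \rtimes_\alpha G$.

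Next I would verify that $D \mapsto D_h$ sends $\text{Der}_c(\C[G] \subset A \rtimes_\alpha G, \tau)$ into $\text{Der}_c(A, \tau)$. Since $D(x) \in (A \rtimes_\alpha G) \otimes (A \rtimes_\alpha G)^\circ$ algebraically, the projection $p_{e,h}$ extracts its $A \otimes (A u_h)^\circ$ component, and $J_{\tau \otimes \tau^\circ}(u_e \otimes u_h^\circ) J_{\tau \otimes \tau^\circ}$ acts on the right to strip the trailing $u_h$, producing $D_h(x) \in A \otimes A^\circ$. For the adjoint, the proof of Theorem \ref{Thm:Free_Stein_Dim} gives $(D_h)^*(1 \otimes 1^\circ) = [L^2(A,\tau)]\, D^*(u_e \otimes u_h^\circ)$; the Leibniz rule together with $D(u_h^*) = 0$ yields
\begin{equation*}
\langle D^*(u_e \otimes u_h^\circ), a\rangle = \langle 1 \otimes 1^\circ, D(a u_h^*)\rangle = \langle D^*(1 \otimes 1^\circ)\, u_h, a\rangle
\end{equation*}
for every $a \in A \rtimes_\alpha G$, so $D^*(u_e \otimes u_h^\circ) = D^*(1 \otimes 1^\circ)\, u_h \in A \rtimes_\alpha G$, and projecting onto $L^2(A,\tau)$ extracts the coefficient of $u_e$, which is an element of $A$.

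With these restrictions in hand, Theorem \ref{Thm:Bij_Map}(3) specializes to the right $(A \otimes A^\circ)''$-module isomorphism
\begin{equation*}
\text{Der}_c(\C[G] \subset A \rtimes_\alpha G, \tau) \cong \bigoplus_{g \in G} (\text{Der}_c(A, \tau))_{1 \otimes \alpha_g}.
\end{equation*}
Taking closures, passing to von Neumann dimensions, and using the index $[((A \rtimes_\alpha G) \otimes (A \rtimes_\alpha G)^\circ)'' : (A \otimes A^\circ)''] = |G|^2$ together with the trace-preservation of $1 \otimes \alpha_g$, yields the first displayed formula exactly as in Corollary \ref{Cor:Bij_Map_Formula}. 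For the second formula I would apply Lemma \ref{Lem:Decomp} to $\mathcal{H} = \overline{\text{Der}_c(A \rtimes_\alpha G, \tau)}$ with $B = \C[G]$, which is legitimate because $\text{InnDer} \subset \overline{\text{Der}_c}$ is recorded at the start of this section. The step I expect to require the most care is the identification
\begin{equation*}
\overline{\text{Der}_c(A \rtimes_\alpha G, \tau)} \cap \text{Der}(\C[G] \subset A \rtimes_\alpha G, \tau) = \overline{\text{Der}_c(\C[G] \subset A \rtimes_\alpha G, \tau)};
\end{equation*}
the $\supset$ inclusion is automatic, and the $\subset$ inclusion follows by applying the orthogonal decomposition in the proof of Lemma \ref{Lem:Decomp} to an approximating sequence in $\text{Der}_c(A \rtimes_\alpha G, \tau)$, choosing each implementer $\xi_n$ of $D_n|_{\C[G]}$ inside the algebraic tensor $(A \rtimes_\alpha G) \otimes (A \rtimes_\alpha G)^\circ$ so that the inner correction $[\,\cdot\,, (1-p)\xi_n]$ itself remains in $\text{Der}_c$. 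Combining with $\dim \text{Der}(\C[G], \tau) = 1 - 1/|G|$ from Lemma \ref{Lem:Der_C[G]} and the first formula then produces the second.
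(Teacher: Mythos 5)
Your proposal is correct and follows essentially the same route as the paper: restrict the maps $d \mapsto d^h$ and $D \mapsto D_h$ from Theorem \ref{Thm:Bij_Map} to the $\text{Der}_c$ subspaces, obtain the direct-sum module isomorphism, and combine Lemma \ref{Lem:Decomp} (with $B=\C[G]$ and $\overline{\text{Der}_c(\C[G],\tau)}=\overline{\text{InnDer}(\C[G],\tau)}$) with the argument of Corollary \ref{Cor:Bij_Map_Formula}. The only departures are cosmetic: you compute $D^*(u_e\otimes u_h^\circ)=D^*(1\otimes 1^\circ)u_h$ directly from $D(u_h^*)=0$ where the paper cites Voiculescu's Proposition 4.1, and you spell out the closure identification $\overline{\text{Der}_c}\cap\text{Der}(\C[G]\subset\cdot)=\overline{\text{Der}_c(\C[G]\subset\cdot)}$ (via algebraic implementers of $D_n|_{\C[G]}$) that the paper leaves implicit.
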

\begin{proof}
First, we show that $d^h \in \text{Der}_c( \C[G] \subset A \rtimes_\alpha G, \tau)$, whenever $d \in \text{Der}_c(A,\tau)$ and $h \in G$. For $d \in \text{Der}_c(A,\tau)$, we have, by Theorem \ref{Thm:Free_Stein_Dim}, that $d^h \in \text{Der}_{1 \otimes 1}(\C[G] \subset A\rtimes_\alpha G)$ with $(d^h)(u_e \otimes u_e^\circ) = d^*(1 \otimes 1^\circ)u_h^* \in (A\rtimes_\alpha G)\otimes(A\rtimes_\alpha G)^\circ $. Let $p_g \in \C \langle T\rangle$ such that $\alpha_g(x) =p_g(X)$, where $g \in G$. Then for $x \in X$, we have 
\begin{align*}
    d^h(x) &= J_{\tau \otimes \tau^\circ}(u_e \otimes (u^*_h)^\circ)J_{\tau \otimes \tau^\circ} \sum_{g \in G} (u_g^* \otimes u_g^\circ)d\big(\alpha_g(a_k)\big)\\
    &=J_{\tau \otimes \tau^\circ}(u_e \otimes (u^*_h)^\circ)J_{\tau \otimes \tau^\circ} \sum_{g \in G}(u_g^* \otimes u_g^\circ) \sum_{x\in X}  \big(\partial_x(p_g)(X)\big)d(x)
\end{align*}
Since $d(x) \in A \otimes A^\circ$ for all $x \in X$, we have that $d^h(x) \in (A\rtimes_\alpha G)\otimes(A\rtimes_\alpha G)^\circ $ and $d^h(u_g)=0$ for all $g \in G$. Hence $d^h \in \text{Der}_c(A \rtimes_\alpha G,\tau ).$

Next, we show that $D_h \in \text{Der}_c(A,\tau)$, whenever $D \in \text{Der}_c(\C[G] \subset A\rtimes_\alpha G,\tau)$. Let $D \in \text{Der}_c(\C[G] \subset A\rtimes_\alpha G, \tau)$ and by Theorem \ref{Thm:Free_Stein_Dim}, we have $D_h \in \text{Der}_{1\otimes1}(A,\tau)$ with $D^*_h(1 \otimes 1^\circ)=[L^2(A,\tau)] D^*(u_e\otimes u_h^*)$, where $h \in G$. Using \cite[Proposition 4.1]{Voi98}, we have
$$D_h^*(1\otimes 1^\circ) = [L^2(A,\tau)] \big(D^*(u_e\otimes u_e^\circ) u_h - (1 \otimes \tau^\circ)D(u_h^*)^*\big).$$
Since $(A\rtimes_\alpha G)\otimes (A \rtimes_\alpha G)^\circ$ decomposes to a finite direct sum of $A \otimes A^\circ$, $D_h^*(1 \otimes 1^{\circ}) \in A \otimes A^\circ$. Since $p_{g,h} \in (A \otimes A)'$ for all $g,h \in G$, we have $D_h(x) \in A \otimes A^\circ$ for all $x \in X$. Hence  $D_h \in \text{Der}_c(A,\tau)$.

By applying the third map in Theorem \ref{Thm:Bij_Map}, one has the right $(A \otimes A^\circ)''$-module isomorphism
    $$ \text{Der}_c(\C[G] \subset A\rtimes_\alpha G, \tau )= \bigoplus_{g \in G} (\text{Der}_c(A,\tau))_{1 \otimes \alpha_g},$$
where the direct sum is with respect to the $\langle \cdot, \cdot \rangle_Y$, where $Y = X \cup \{u_g : g \in G\}$.

Lastly, by Lemma \ref{Lem:Decomp} and $\overline{\text{Der}_c(\C[G],\tau)}= \overline{\text{InnDer}(\C[G],\tau)}$, we can apply Corollary \ref{Cor:Bij_Map_Formula} to the following subspaces $\text{Der}_c(\C[G] \subset A\rtimes_\alpha G,\tau) \subset \text{Der}(A\rtimes_\alpha G,\tau)$ and $\text{Der}_c(A,\tau) \subset \text{Der}(A,\tau)$ to get
$$\dim\overline{\text{Der}_c(\C[G] \subset A\rtimes_\alpha G,\tau)}_{((A\rtimes_\alpha G) \otimes (A\rtimes_\alpha G)^\circ)''}  = \frac{1}{|G|}  \dim \overline{\text{Der}_c(A,\tau)}_{(A \otimes A^\circ)''}$$
and
$$\dim\overline{\text{Der}_c( A\rtimes_\alpha G,\tau)}_{((A\rtimes_\alpha G) \otimes (A\rtimes_\alpha G)^\circ)''} -1= \frac{1}{|G|} ( \dim \overline{\text{Der}_c(A,\tau)}_{(A \otimes A^\circ)''}-1).$$
\end{proof}

The proof of the following corollary is similar to the proof of Corollary \ref{Cor:Subgroup}.
\begin{cor}
Let $G\stackrel{\alpha}{\curvearrowright}(M,\tau)$ be a trace-preserving action of a finite group $G$ on a tracial von Neumann algebra and let $A\subset M$ be a finitely generated unital $*$-subalgebra which is globally invariant under $\alpha$. If $H \subset G$ is a finite subgroup of $G$, then 
 $$\dim \overline{\emph{Der}_c( A \rtimes_\alpha G,\tau)}_{((A\rtimes_\alpha G) \otimes (A\rtimes_\alpha G)^\circ)''}-1 = \frac{1}{[G: H]}( \dim\overline{\emph{Der}_c( A\rtimes_\alpha H,\tau)}_{(A \otimes A^\circ)''}-1).$$
\end{cor}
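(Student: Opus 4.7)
The plan is to mirror the two-line argument given for Corollary \ref{Cor:Subgroup}, simply replacing the appeal to Corollary \ref{Cor:Bij_Map_Formula} by an appeal to Theorem \ref{Thm:Anot_subs}. Since $H \subset G$ is a subgroup, the restricted action $\alpha|_H$ is still a trace-preserving action of the finite group $H$ on $(M,\tau)$, and $A$ remains globally invariant under $\alpha|_H$ because it was globally invariant under $\alpha$. Hence the hypotheses of Theorem \ref{Thm:Anot_subs} are satisfied for both the pair $(G,A)$ and the pair $(H,A)$.

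First, I would apply Theorem \ref{Thm:Anot_subs} to $(G,A)$ to obtain
 $$\dim \overline{\text{Der}_c(A \rtimes_\alpha G,\tau)}_{((A\rtimes_\alpha G) \otimes (A\rtimes_\alpha G)^\circ)''} -1 = \frac{1}{|G|}\bigl(\dim \overline{\text{Der}_c(A,\tau)}_{(A \otimes A^\circ)''}-1\bigr),$$
and the same theorem applied to $(H,A)$ gives
 $$\dim \overline{\text{Der}_c(A \rtimes_\alpha H,\tau)}_{((A\rtimes_\alpha H) \otimes (A\rtimes_\alpha H)^\circ)''} -1 = \frac{1}{|H|}\bigl(\dim \overline{\text{Der}_c(A,\tau)}_{(A \otimes A^\circ)''}-1\bigr).$$
Combining these two identities and using $[G:H] = |G||H|^{-1}$ yields the claimed formula by cancelling the common factor $\dim\overline{\text{Der}_c(A,\tau)}_{(A \otimes A^\circ)''}-1$.

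There is no serious obstacle here; the result is a formal consequence of Theorem \ref{Thm:Anot_subs} applied twice. The only point that deserves a brief justification is that the global $\alpha$-invariance of $A$ automatically passes to $\alpha|_H$-invariance, which is immediate from $H \subset G$, so that Theorem \ref{Thm:Anot_subs} is genuinely applicable to the pair $(H, A)$.
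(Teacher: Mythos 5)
Your proposal is correct and is exactly the argument the paper intends: the paper states that this corollary follows by the same two-line argument as Corollary \ref{Cor:Subgroup}, namely applying Theorem \ref{Thm:Anot_subs} once to $(G,A)$ and once to $(H,A)$ and cancelling the common factor via $[G:H]=|G||H|^{-1}$. Your remark that global $\alpha$-invariance passes to the restricted action $\alpha|_H$ is the only hypothesis check needed, and it is handled correctly.
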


The following is an estimate for the microstates free entropy dimension $\delta_0$ when we consider the crossed product of a von Neumann algebra with a finite group. This uses a known inequality, $ \dim \text{Der}_c(A,\tau)\leq \delta_0 $ (see \cite[Theorem 2, Corollary 17]{Shl09}). Note that the assumption $A '' \hookrightarrow R^\omega$  is to guarantee $\delta_0 >-\infty$, where $R$ is the hyperfinite II$_1$ factor.
\begin{cor}
Let $G\stackrel{\alpha}{\curvearrowright}(M,\tau)$ be a trace-preserving action of a finite abelian group $G$ on a tracial von Neumann algebra and let $A\subset M$ be a finitely generated unital $*$-subalgebra which is globally invariant under $\alpha$. Assume that $A''$ can be embedded in the ultrapower of the hyperfinite $\emph{II}_1$ factor. Then for any generating set $Y$ of $A \rtimes_\alpha G$, we have
    $$\frac{1}{|G|} (\dim \overline{\emph{Der}_c(A, \tau)}_{(A \otimes A^\circ)''}-1) +1  \leq \delta_0(Y) $$
\end{cor}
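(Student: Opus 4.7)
The plan is to chain together two ingredients: the inequality $\dim \overline{\text{Der}_c(B,\tau)}_{(B \otimes B^\circ)''} \leq \delta_0(Y)$ due to Shlyakhtenko (\cite[Theorem 2, Corollary 17]{Shl09}), valid for any finitely generated unital $*$-algebra $B$ whose von Neumann closure embeds into $R^\omega$ and for any finite generating set $Y$ of $B$; and Theorem \ref{Thm:Anot_subs} (Theorem B), which was already established in the previous section.

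First, I would apply Shlyakhtenko's inequality with $B = A \rtimes_\alpha G$. This requires verifying that $(A \rtimes_\alpha G)''$ embeds into $R^\omega$. Since $A'' \hookrightarrow R^\omega$ by hypothesis and $G$ is finite, the crossed product $(A \rtimes_\alpha G)'' = A'' \rtimes_\alpha G$ also embeds into $R^\omega$ — crossed products by finite (in particular amenable) groups preserve $R^\omega$-embeddability. This is the mildest of checks. Consequently, for any generating set $Y$ of $A \rtimes_\alpha G$,
$$\delta_0(Y) \geq \dim \overline{\text{Der}_c(A \rtimes_\alpha G,\tau)}_{((A\rtimes_\alpha G)\otimes (A\rtimes_\alpha G)^\circ)''}.$$

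Second, I would substitute into the right-hand side using the furthermore clause of Theorem \ref{Thm:Anot_subs}, namely
$$\dim \overline{\text{Der}_c(A \rtimes_\alpha G, \tau)}_{((A\rtimes_\alpha G) \otimes (A\rtimes_\alpha G)^\circ)''} - 1 = \frac{1}{|G|}\bigl(\dim \overline{\text{Der}_c(A,\tau)}_{(A \otimes A^\circ)''} - 1\bigr).$$
Rearranging and combining with the previous inequality immediately yields the claimed lower bound on $\delta_0(Y)$.

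There is essentially no technical obstacle: the corollary is a direct consequence of the Schreier-type identity of Theorem \ref{Thm:Anot_subs} together with an inequality already in the literature. The conceptual point worth emphasizing — matching the remark in the introduction about recovering results of \cite{Shl22} — is that the resulting lower bound depends only on $A$ and $|G|$, and therefore applies uniformly to every generating set $Y$ of $A \rtimes_\alpha G$; this is what makes the estimate sharper than what one obtains by applying Shlyakhtenko's inequality to a single generating set of the crossed product.
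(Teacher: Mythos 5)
Your proposal is correct and follows exactly the paper's own argument: apply Shlyakhtenko's inequality $\delta_0(Y) \geq \dim \overline{\text{Der}_c(A \rtimes_\alpha G,\tau)}_{((A\rtimes_\alpha G)\otimes(A\rtimes_\alpha G)^\circ)''}$ from \cite[Theorem 2]{Shl09} and then substitute the furthermore clause of Theorem \ref{Thm:Anot_subs}. Your extra remark on $R^\omega$-embeddability of the crossed product is a sensible (and harmless) addition that the paper leaves implicit.
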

\begin{proof}
Using \cite[Theorem 2]{Shl09} and Theorem \ref{Thm:Anot_subs}, we have
\begin{equation*}
\delta_0(Y) \geq \dim \overline{\text{Der}_c(A \rtimes_\alpha G, \tau)}_{((A\rtimes_\alpha G) \otimes (A\rtimes_\alpha G)^\circ)''}= \frac{1}{|G|}(\dim \overline{\text{Der}_c(A, \tau)}_{(A \otimes A^\circ)''}-1)+1.\qedhere
\end{equation*}
\end{proof}

%%%%%%%%%%%%%%%%%%%%%%%%%%%%%%%%%%%%%%%%%%%%%%%%%
    %%%  Section 5: Derivations of \Delta %%%    
%%%%%%%%%%%%%%%%%%%%%%%%%%%%%%%%%%%%%%%%%%%%%%%%%
\section{Schreier's Formula for $\Delta$}
In this section, we remind the reader of $\Delta$ and we show that $\Delta$ can be computed by taking the von Neumann dimension of a certain subspace of derivations. Then we show Theorem \ref{Thm:Delta_Dim} by choosing a set of generators to apply Theorem \ref{Thm:Bij_Map} and Corollary \ref{Cor:Bij_Map_Formula} on $\Delta$. We further assume that the finite group $G$ acting on $M$ is \emph{abelian}, since this extra assumption guarantees a generating set of $A$ that is scaled under the action of $\alpha$. Lastly, we apply our results to the free entropy dimensions.

We fix $A \subset M$ a finitely generated unital $*$-subalgebra with $A=\C\langle X\rangle$, where $X$ is a finite self-adjoint subset of $A$. In \cite[Section 3]{CS05}, Connes and Shlyakhtenko defined the quantity $\Delta$ for $X$ with respect to $\tau$ as
 $$ \Delta(A,\tau) = \dim\left(\overline{\partial^t_X\Big(B\big(L^2(A,\tau)\big)\Big)}^{\text{WOT}} \cap \Big(\text{HS}\big(L^2(A,\tau)\big)\Bigg)^{X}\right)_{(A \otimes A^\circ)''},$$
where $\text{HS}\big(L^2(A,\tau)\big)$ is the set of Hilbert--Schmidt operators on $L^2(A,\tau)$ and $\partial^t_X(y) = \big( [y, x]\big)_{x \in X}$ for $y \in B\big(L^2(A,\tau)\big)$. Although it is not obvious from the definition, $\Delta(A,\tau)$ only depends on $A$ (see \cite[Theorem 3.3]{CS05}. Since $\partial^t_{X}$ is continuous with respect to the weak operator topology, we can redefine the quantity as
 $$ \Delta(A,\tau) = \dim\left(\overline{\partial^t_X\Big(\text{HS}\big(L^2(A,\tau)\big)\Big)}^{\text{WOT}} \cap \Big(\text{HS}\big(L^2(A,\tau)\big)\Big)^X\right)_{(A \otimes A^\circ)''}.$$
Notice that $\overline{\partial^t_{X}\Big(\text{HS}\big(L^2(A,\tau)\big)\Big)}^{\text{WOT}} \cap \Big(\text{HS}\big(L^2(A,\tau)\big)\Big)^{X}$ is a closed subspace of $\Big(\text{HS}\big(L^2(A,\tau)\big)\Big)^{X}$, since convergence with respect to the Hilbert--Schmidt norm implies the weak operator topology convergence.  

Recall the identification $L^2(A \otimes A^\circ,\tau \otimes\tau^\circ)$ with $\text{HS}\big(L^2(A,\tau)\big)$ via $ \xi \otimes \eta^\circ \mapsto \xi \otimes \bar\eta$, where $\xi,\eta \in L^2(A,\tau)$ and note that $L^2(A,\tau) \odot L^2(A^\circ, \tau^\circ)$ is identified with $\text{FR}\big(L^2(A,\tau)\big)$. Thus, $S = (S_x)_{x \in X} \in  \overline{\partial^t_{X}\Big(\text{HS}\big(L^2(A,\tau)\big)\Big)}^{\text{WOT}} \cap \Big(\text{HS}\big(L^2(A,\tau)\big)\Big)^{X}$, if there exists a net $\{\xi_\lambda\} \in L^2(A\otimes A^\circ,\tau\otimes\tau^\circ)$ such that 
$$ \lim_{\lambda \to \infty} \langle S - \partial^t_X(\xi_\lambda), \xi \otimes \eta^\circ \rangle= \lim_{\lambda \to \infty} \sum_{x \in X} \langle S_x- [\xi_\lambda,x] , \xi_x \otimes \eta_x^\circ\rangle=0,$$
whenever $\xi, \eta \in L^2(A,\tau)^X$.

Also, observe that $ [\phi_X(\text{Der}(A,\tau))] \in M_{|X|} ((A\otimes A^\circ)'',\tau \otimes \tau^\circ)$, where $\phi_X$ is the map defined in subsection 1.3, because $\phi_X(\text{Der}(A,\tau)) \subset L^2(A \otimes A^\circ, \tau \otimes \tau^\circ)^X$ is invariant under the diagonal action of $(A \otimes A^\circ)'$.

\begin{defi}
For $A = \C\langle X \rangle$, we denote by $\text{Der}_{\text{FR},X}(A,\tau)$ the derivations $d \in \text{Der}(A,\tau)$ such that $ (d(x))_{x \in X} \in [\phi_X(\text{Der}(A,\tau))]( L^2(A,\tau)\odot L^2(A^\circ, \tau^\circ))^{X}$. For each $d \in \text{Der}_{\text{FR},X}(A,\tau)$, we define a seminorm $\rho_{X,d} : \text{Der}(A,\tau) \to \mathbb{R}$ by 
$$\rho_{X,d}( d') := \left| \langle d', d \rangle_X\right|.$$
Set $\mathcal{P}_{\text{FR},X} : = \{ \rho_{X,d}: d \in \text{Der}_{\text{FR},X}(A,\tau)\}$. We define $\text{Der}_{[\, \cdot \,, X]}(A,\tau)$ to be the closure of $\text{InnDer}(A,\tau)$ under the topology generated by this family of seminorms.
\end{defi}

Since $L^2(A,\tau)\odot L^2(A^\circ,\tau^\circ)$ is dense in $L^2(A\otimes A^\circ,\tau\otimes\tau^\circ)$, the intersection of the kernels of these seminorms is trivial. Thus $(\text{Der}(A,\tau), \mathcal{P}_{\text{FR}, X})$ is a locally convex space. By the Cauchy--Schwartz inequality, it follows that $\text{Der}_{[\,\cdot\, , X]}(A,\tau)$ is a closed $(A \otimes A^\circ)''$-submodule in $\text{Der}(A,\tau)$. Although $\text{Der}_{\text{FR}, X}(A,\tau)$ is not closed, it is a $(A \otimes A^\circ)$-submodule. We note that each element in $\text{Der}_{[\, \cdot \,, X]}(A,\tau)$ is ``almost weakly approximated" by inner derivations, in the sense that the weak limits are only against elements in $\text{Der}_{\text{FR},X}(A,\tau)$.  

The following lemma shows that the corresponding subspace of derivations for $\Delta$ is the one we defined above.
\begin{lem}\label{Lem:CS_Correp_Der}
Let $(M,\tau)$ be a tracial von Neumann algebra with finitely generated unital $*$-subalgebra $A \subset M$ and let $X\subset A$ be any finite self-adjoint subset satisfying $A = \C\langle X\rangle$. The following linear map
    \begin{align*}
        \emph{Der}_{[ \,\cdot\, , X]}(A,\tau) &\to \overline{\partial^t_{X}\Big(\emph{HS}\big(L^2(A,\tau)\big)\Big)}^{\emph{WOT}} \cap \Big(\emph{HS}\big(L^2(A,\tau)\big)\Big)^{X}  \\
        d &\mapsto (d(x))_{x \in X} 
    \end{align*}
is bijective, and right $(A \otimes A^\circ)''$-linear. Consequently, $\emph{Der}_{[\, \cdot \, , X]}(A,\tau)$ is a closed right $(A\otimes A^\circ)''$-submodule and we have
$$ \Delta(A,\tau) = \dim \emph{Der}_{[ \,\cdot\, , X]}(A,\tau).$$
\end{lem}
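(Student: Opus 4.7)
The plan is to translate everything through the injective, right $(A \otimes A^\circ)''$-linear map $\phi_X$ composed with the Hilbert--Schmidt identification, so that the stated correspondence becomes the equality of two closures of a single set under matching topologies.

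First I would verify that under the identification $L^2(A\otimes A^\circ,\tau\otimes\tau^\circ)\cong \HS(L^2(A,\tau))$, the subspace $\phi_X(\text{InnDer}(A,\tau))$ corresponds, up to a harmless sign, to $\partial^t_X(\HS(L^2(A,\tau)))$: for $\xi\in L^2(A\otimes A^\circ,\tau\otimes\tau^\circ)$, the inner derivation $[\,\cdot\,,\xi]$ has $\phi_X$-image $([x,\xi])_{x\in X}$, which upon HS identification becomes $(-[T_\xi,x])_{x\in X}$, since left multiplication by $x\in A$ corresponds to left composition on HS and right multiplication corresponds to right composition. Second, I would identify the locally convex topology $\mathcal{P}_{\text{FR},X}$ on $\text{Der}(A,\tau)$ with the weak topology on $\phi_X(\text{Der}(A,\tau))$ pulled back from pairings with $(L^2(A,\tau)\odot L^2(A^\circ,\tau^\circ))^X$. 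Indeed, if $d'\in\text{Der}_{\text{FR},X}(A,\tau)$ then $\phi_X(d')=[\phi_X(\text{Der}(A,\tau))]\alpha$ for some $\alpha\in(L^2(A,\tau)\odot L^2(A^\circ,\tau^\circ))^X$, and for $d\in\text{Der}(A,\tau)$,
\[
\langle d,d'\rangle_X=\langle\phi_X(d),[\phi_X(\text{Der}(A,\tau))]\alpha\rangle=\langle\phi_X(d),\alpha\rangle,
\]
because $\phi_X(d)$ already lies in the range of the projection; conversely every such $\alpha$ gives rise to some $d'\in\text{Der}_{\text{FR},X}(A,\tau)$ by definition, so the seminorms $\rho_{X,d'}$ parametrize precisely the FR-pairings on $\phi_X(\text{Der}(A,\tau))$.

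Next I would show that $\phi_X(\text{Der}(A,\tau))$ is closed under the weak-against-FR topology on $L^2(A\otimes A^\circ,\tau\otimes\tau^\circ)^X$: a tuple $(\eta_x)_{x\in X}$ lies in $\phi_X(\text{Der}(A,\tau))$ iff $\sum_x\partial_x(p)(X)\cdot\eta_x=0$ for every $p\in\ker(\text{ev}_X)$, and since each $\partial_x(p)(X)\in A\otimes A^\circ$, testing this equation against any $\zeta\in L^2(A,\tau)\odot L^2(A^\circ,\tau^\circ)$ produces $\sum_x\langle\eta_x,\partial_x(p)(X)^*\cdot\zeta\rangle=0$ with $\partial_x(p)(X)^*\cdot\zeta\in A\otimes A^\circ\subset L^2(A,\tau)\odot L^2(A^\circ,\tau^\circ)$, so the defining conditions are FR-continuous linear equations. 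Finally, under the HS identification, the WOT on $B(L^2(A,\tau))^X$ restricted to $\HS(L^2(A,\tau))^X$ coincides with the weak-against-FR topology, since WOT convergence tests against elementary tensors $w\otimes v^\circ$ and hence, by linearity, against all of $L^2(A,\tau)\odot L^2(A^\circ,\tau^\circ)$.

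Combining these steps, $\phi_X(\text{Der}_{[\,\cdot\,,X]}(A,\tau))$ equals the weak-against-FR closure of $\phi_X(\text{InnDer}(A,\tau))$ taken inside $\phi_X(\text{Der}(A,\tau))$, which by the closedness step equals the ambient weak-against-FR closure in $L^2(A\otimes A^\circ,\tau\otimes\tau^\circ)^X$, and by the first and fourth observations this is exactly $\overline{\partial^t_X(\HS(L^2(A,\tau)))}^{\text{WOT}}\cap\HS(L^2(A,\tau))^X$. The claimed bijectivity and right $(A\otimes A^\circ)''$-linearity follow from those of $\phi_X$ together with the HS identification, which intertwines the $M\bar\otimes M^\circ$-action on $L^2$ with the sandwich action on HS; closedness of $\text{Der}_{[\,\cdot\,,X]}(A,\tau)$ as a right $(A\otimes A^\circ)''$-submodule and the formula $\Delta(A,\tau)=\dim\text{Der}_{[\,\cdot\,,X]}(A,\tau)$ are then immediate. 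The main obstacle I expect is the closedness step: one must strengthen the Charlesworth--Nelson norm-closedness of $\phi_X(\text{Der}(A,\tau))$ to closedness under the weaker FR-pairing topology, which hinges crucially on the algebraic fact that $\partial_x(p)(X)\in A\otimes A^\circ$ and not merely in its $L^2$-completion.
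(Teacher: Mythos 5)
Your proposal is correct and is, at its core, the paper's own argument reorganized into a statement about matching topologies: your identification of the seminorm family $\mathcal{P}_{\text{FR},X}$ with the weak-against-FR pairing via the projection $[\phi_X(\text{Der}(A,\tau))]$ is exactly the paper's computation showing the map lands in the right range, and your closedness step for $\phi_X(\text{Der}(A,\tau))$ under that topology is precisely what the paper verifies element-wise in its surjectivity argument when it checks that $\widehat{d}_S$ annihilates $\ker(\text{ev}_X)$ --- both hinge on the same algebraic fact that $\partial_x(p)(X)$ lies in $A\otimes A^\circ$ rather than merely in its $L^2$-completion. One minor imprecision that does not affect the argument: for $\zeta\in L^2(A,\tau)\odot L^2(A^\circ,\tau^\circ)$ the vector $\partial_x(p)(X)^*\cdot\zeta$ lies in $L^2(A,\tau)\odot L^2(A^\circ,\tau^\circ)$ but not in $A\otimes A^\circ$ in general.
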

\begin{proof}
First, we show that the map is valued in the correct range. That is, given $d \in \text{Der}_{[ \,\cdot\, , X]}(A,\tau)$, one has $(d(x))_{x\in X}\in \overline{\partial^t_{X}(\text{HS}(L^2(A,\tau)))}^{\text{WOT}} \cap (\text{HS}(L^2(A,\tau)))^{X} $. Since $d \in \text{Der}_{[ \,\cdot\, , X]}(A,\tau)$ there exists a net $\{d_\lambda\}_{\lambda \in \Lambda} \in \text{InnDer}(A,\tau)$ that almost weakly approximates $d$ with $\xi_\lambda\in L^2(A\otimes A^\circ, \tau \otimes \tau^\circ)$ such that $d_\lambda(\cdot ) : = [ \cdot \, , - \xi_\lambda]$ for each $\lambda$.  For $\xi= (\xi_x)_{x \in X}, \eta=(\eta_x)_{x \in X} \in L^2(A, \tau )^X$, we have
\begin{align*}
    \langle (d(x))_{x \in X} - \partial_X^t(x_\lambda), \xi \otimes \eta\rangle &= \sum_{x \in X} \langle d(x) - [\xi_\lambda, x],  \xi_x \otimes \eta_x\rangle\\
    &=\sum_{x \in X} \langle d(x) - d_\lambda(x), \xi_x \otimes \eta_x \rangle\\
    &= \langle (d(x) - d_\lambda(x))_{x \in X}, \xi \otimes \eta \rangle
\end{align*}
By setting $d' \in \text{Der}_{\text{FR},X}(A,\tau)$ such that  $(d'(x))_{x \in X}: = [\phi_X(\text{Der}(A,\tau))] (\xi \otimes \eta)$, the above computation becomes
$$ \langle (d(x))_{x \in X} - \partial_X^t(x_\lambda), \xi \otimes \eta\rangle= \langle (d(x) - d_\lambda(x))_{x \in X}, [\phi_X(\text{Der}(A,\tau))](\xi \otimes \eta) \rangle= \langle d - d_\lambda, d' \rangle_X.$$
Thus we have that $(d(x))_{x \in X} \in \overline{\partial^t_{X}(\text{HS}(L^2(A,\tau)))}^{\text{WOT}} \cap (\text{HS}(L^2(A,\tau)))^{X}.$

Next, from the Leibniz rule, one has that each derivation is determined by its values on $X$. Hence, the map above is injective. The map is right $(A \otimes A^\circ)''$, since the right action on a derivation is pointwise.

Before we prove sujectivity, we show that any element of $\overline{\partial^t_{X}(\text{HS}(L^2(A,\tau)))}^{\text{WOT}} \cap (\text{HS}(L^2(A,\tau)))^{X}$ gives rise to a derivation in $\text{Der}(A,\tau)$. That is, given $S=(S_x)_{x \in X} \in \overline{\partial^t_{X}(\text{HS}(L^2(A,\tau)))}^{\text{WOT}} \cap (\text{HS}(L^2(A,\tau)))^{X}$, there exists $d_S \in \text{Der}(A,\tau)$ such that $(d_S(x))_{x \in X}=S$. For $p \in \C\langle T_X\rangle$, define
$$ \widehat{d}_S (p) = \sum_{x \in X} \partial_x (p)(X) S_x.$$
We claim that $\widehat{d}_S( \ker(\text{ev}_X))=0$. Let $p \in \C \langle T_X \rangle$ be such that $p(X)=0$. Since $(S_x)_{x \in X} \in \overline{\partial^t_{X}(\text{HS}(L^2(A,\tau)))}^{\text{WOT}} \cap (\text{HS}(L^2(A,\tau)))^{X}$, we have that $(S_x)_{x \in X} = (\text{WOT-}\lim_{\lambda\to \infty}[ \xi_\lambda, x])_{x \in X}$ for some net $(\xi_\lambda)_{\lambda \in \Lambda} \subset L^2(A\otimes A^\circ,\tau\otimes\tau^\circ))$. This means that 
 $$\widehat{d}_S(q(X))= \text{WOT-}\lim_{\lambda \to \infty}[\xi_\lambda, q(X)]$$ 
for all $q \in \C\langle T_X\rangle$. Hence, we have $\widehat{d}_S(p(X))=0$, since  $[\xi_\lambda, p(X)] =0$ for all $\lambda$. Since $\widehat{d}_S$ factors through $A$, it follows that there exist $d_S \in \text{Der}(A,\tau)$ such that $\widehat{d}_S (p) = d_S(p(X))$ for all $p \in \C\langle T_X \rangle$. 

Finally, we show that the map is surjective. Let $d_S$ be the derivation on $A$ as defined above, where $S=(S_x)_{x \in X}$. We claim that $d_S \in \text{Der}_{[ \,\cdot\,, X]}(A,\tau)$. Again, since $S \in \overline{\partial^t_{X}(\text{HS}(L^2(A,\tau)))}^{\text{WOT}} \cap (\text{HS}(L^2(A,\tau)))^{X}$, there exists $(\xi_\lambda)_{\lambda \in \Lambda} \subset L^2(A\otimes A^\circ,\tau\otimes\tau^\circ)$ such that $(\partial^t_X(\xi_\lambda))_{x \in X} \to (S_x)_{x \in X}$ in $L^2$-norm. Set $d_\lambda (\cdot )= [ \, \cdot \, ,-\xi_\lambda] \in \text{InnDer}(A,\tau)$. Then for $d' \in \text{Der}_{\text{FR},X}(A,\tau)$,
    \begin{align*}
        \langle d_S - d_\lambda, d' \rangle_X &= \sum_{x \in X}  \langle (d_S(x) - [x, -\xi_\lambda], d'(x) \rangle\\
        &=\sum_{x \in X} \langle (S_x-[\xi_\lambda,x]), d'(x) \rangle\\
        &=\langle S - \partial^t_{X}(\xi_\lambda), (d'(x))_{x \in X} \rangle.
    \end{align*} 
Since $(d'(x))_{x \in X} \in [\phi_X(\text{Der}(A,\tau))](L^2(A ,\tau) \odot L^2(A^\circ, \tau^\circ))^X$, we have $d_S \in \text{Der}_{[ \,\cdot\,, X]}(A, \tau).$ Thus, we have the following right $(A \otimes A^\circ)''$-module isomophism 
$$\text{Der}_{[ \,\cdot\, , X]}(A,\tau) \cong\overline{\partial^t_{X}\Big(\text{HS}\big(L^2(A,\tau)\big)\Big)}^{\text{WOT}} \cap \Big(\text{HS}\big(L^2(A,\tau)\big)\Big)^{X}.$$
It follows that $\text{Der}_{[ \,\cdot\, , X]}(A,\tau)$ is closed and taking their dimension gives 
\begin{equation*}
    \Delta (A ,\tau) =\dim \text{Der}_{[ \,\cdot\, , X]}(A,\tau)_{(A \otimes A^\circ)''}.\qedhere
\end{equation*} 
\end{proof}

Next, fix $\alpha$ a trace-preserving action of a finite abelian group $G$ on $M$ and let $A$ be globally invariant under $\alpha$. With the assumption that $G$ is a finite abelian group, we show that $A$ contains a generating set that is scaled under $\alpha$. Let $\hat{G}$ be the dual group of $G$ and consider the following finite self-adjoint subset of $A$,
\begin{equation*}
     X_{\hat{G}}:= \left\{ \frac{1}{|G|}\sum_{g \in G} \overline{\chi(g)} \alpha_g(x): x \in X, \chi \in \hat{G}\right\}.
\end{equation*}
Since $X_{\hat{G}} \subset A$  and $x = \sum_{\chi \in \hat{G}} \sum_{g \in G} |G|^{-1}\overline{\chi(g)} \alpha_g(x)$, we have $ \C\langle X_{\hat{G}}\rangle = A$. We note that $X_{\hat{G}}$ is scaled under $\alpha$, since for $\chi \in \hat{G}$, $h \in G$ and $x \in X$
$$\alpha_h \left(\frac{1}{|G|}\sum_{g \in G} \overline{\chi(g)} \alpha_g(x)\right) = \frac{1}{|G|}\sum_{g \in G} \overline{\chi(g)} \alpha_{hg}(x)= \overline{\chi(h^{-1})} \frac{1}{|G|}\sum_{g \in G}\overline{\chi(g)} \alpha_g(x)=\chi(h) \frac{1}{|G|}\sum_{g \in G}\overline{\chi(g)} \alpha_g(x).$$ It follows that  $A \rtimes_\alpha G$ is generated by $Y  = X_{\hat{G}} \cup \{u_g: g\in G\}$, and these generators are also scaled under the action of $\alpha$. Notice that since $Y$ is scaled by $\alpha$, one has $\langle \cdot, \cdot \rangle_Y = \langle \cdot , \cdot \rangle_{\alpha_g(Y)}$ for all $g \in G$. Thus, we can always choose $X$ a finite self-adjoint generating set for $A$ such that $X$ is scaled under $\alpha$ and similarly for $A\rtimes_\alpha G$.

Our objective is to prove that Theorem \ref{Thm:Bij_Map} and Corollary \ref{Cor:Bij_Map_Formula} can be applied to the subspaces $\text{Der}_{[ \,\cdot\,, Y]}(A \rtimes_\alpha G, \tau)$ and $\text{Der}_{[ \, \cdot \, , X]}(A, \tau)$. To do so, it will become apparent that we need to work with a derivation of the form $u_g^* \cdot D( \alpha_g ( \cdot ))\cdot u_g$, where $D \in \text{Der}_{\text{FR},Y}(A\rtimes_\alpha G,\tau)$.

\begin{lem}\label{Lem:Unitary_map}
Let $G\stackrel{\alpha}{\curvearrowright}(M,\tau)$ be a trace-preserving action of a finite abelian group $G$ on a tracial von Neumann algebra and let $A\subset M$ be a finitely generated unital $*$-subalgebra which is globally invariant under $\alpha$. For each $g \in G$, the map $V_g : \emph{Der}(A\rtimes_\alpha G,\tau) \to \emph{Der}(A\rtimes_\alpha G,\tau)$ defined by 
 $$ D \mapsto u_g^* \cdot D( \alpha_g(\cdot)) \cdot u_g$$
is a unitary with respect to $\langle \cdot, \cdot \rangle_Y$, where $Y$ is a finite self-adjoint generating set of $A\rtimes_\alpha G$ such that $Y$ is scaled under $\alpha$. Furthermore, $V_g$ commutes with the right $((A\rtimes_\alpha G) \otimes (A \rtimes_\alpha G)^\circ)''$-action on $\emph{Der}(A\rtimes_\alpha G,\tau)$ and $ V_g\emph{Der}_{\emph{FR},Y}(A\rtimes_\alpha G,\tau) \subset \emph{Der}_{\emph{FR},Y}(A\rtimes_\alpha G,\tau),$ for all $g \in G$.
\end{lem}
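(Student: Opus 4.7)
The plan is to establish the four claims in turn---that $V_g D$ is a derivation, that $V_g$ is an isometry (hence unitary once an inverse is produced), that $V_g$ commutes with the right $((A\rtimes_\alpha G)\otimes(A\rtimes_\alpha G)^\circ)''$-action, and that it preserves $\text{Der}_{\text{FR},Y}$. The first three are direct computations; the last is the main subtlety.

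For the Leibniz identity I would expand $V_g D(xy) = u_g^*\cdot D(\alpha_g(x)\alpha_g(y))\cdot u_g$ using $D$'s Leibniz rule together with the identities $u_g^*\alpha_g(x) = xu_g^*$ and $\alpha_g(y)u_g = u_gy$ (valid since $\alpha_g=\operatorname{Ad}u_g$ inside $M\rtimes_\alpha G$); the bimodule action from (\ref{Eq:Right_Left_Action}) then redistributes to give $x\cdot V_g D(y) + V_g D(x)\cdot y$. For the isometry property, the key input is that $Y$ is chosen to be scaled by $\alpha$, so $\alpha_g(y) = \lambda_y(g)\, y'$ for some unimodular $\lambda_y(g)\in\C$ and $y'\in Y$ (with $\alpha_g$ permuting $Y$ up to these unimodular scalars); combined with $\alpha$ being trace-preserving and the left–right bimodule action of unitaries preserving the $L^2$-inner product, this yields $\langle V_g D_1,V_g D_2\rangle_Y = \langle D_1,D_2\rangle_Y$ termwise. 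The inverse is $V_{g^{-1}}$, verified by expanding $V_g V_{g^{-1}}$ using $u_g u_g^* = 1$. Commutation with the right $((A\rtimes_\alpha G)\otimes(A\rtimes_\alpha G)^\circ)''$-action is immediate from the remark following (\ref{Eq:Right_Left_Action}): the bimodule actions used to build $V_g$ already commute with this right action, and precomposition by $\alpha_g$ does not touch the target space.

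The main obstacle is preservation of $\text{Der}_{\text{FR},Y}$. My plan is to lift $V_g$ through $\phi_Y$: define the operator $W_g$ on $L^2\bigl((A\rtimes_\alpha G)\otimes(A\rtimes_\alpha G)^\circ,\tau\otimes\tau^\circ\bigr)^Y$ by $(W_g\xi)_y = \lambda_y(g)\, u_g^*\cdot\xi_y\cdot u_g$, so that $\phi_Y\circ V_g = W_g\circ\phi_Y$. The same unitary-conjugation argument as in the isometry step shows $W_g$ is a unitary on the whole $L^2(\ldots)^Y$, and because its action is coordinatewise bimodule multiplication by elements of $A\rtimes_\alpha G$, it maps $\bigl(L^2(A\rtimes_\alpha G,\tau)\odot L^2((A\rtimes_\alpha G)^\circ,\tau^\circ)\bigr)^Y$ into itself (simple tensors go to simple tensors). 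Since $V_g$ is a bijection on $\text{Der}(A\rtimes_\alpha G,\tau)$, the closed subspace $\phi_Y(\text{Der}(A\rtimes_\alpha G,\tau))$ is $W_g$-invariant; being unitary, $W_g$ then commutes with the orthogonal projection $[\phi_Y(\text{Der}(A\rtimes_\alpha G,\tau))]$. For any $D\in\text{Der}_{\text{FR},Y}$, writing $\phi_Y(D) = [\phi_Y(\text{Der}(A\rtimes_\alpha G,\tau))](\xi)$ with $\xi\in(L^2\odot L^2)^Y$, we obtain $\phi_Y(V_g D) = [\phi_Y(\text{Der}(A\rtimes_\alpha G,\tau))](W_g\xi)$ with $W_g\xi\in(L^2\odot L^2)^Y$, which places $V_g D$ in $\text{Der}_{\text{FR},Y}$ as required.
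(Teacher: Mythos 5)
Your proposal is correct and follows essentially the same route as the paper: a direct Leibniz computation, unitarity via the scaled generating set and the inverse $V_{g^{-1}}$, commutation with the right action from the commuting bimodule actions, and preservation of $\mathrm{Der}_{\mathrm{FR},Y}$ by conjugating through $\phi_Y$ to the diagonal multiplication operator $W_g$, observing it preserves both $\phi_Y(\mathrm{Der}(A\rtimes_\alpha G,\tau))$ and $\bigl(L^2\odot L^2\bigr)^Y$ and hence commutes with the projection. The only (harmless) imprecision is the parenthetical suggesting $\alpha_g$ may permute $Y$ up to scalars: the paper's notion of ``scaled'' means each $y\in Y$ is an eigenvector, $\alpha_g(y)=\lambda_{g,y}y$, which is what your diagonal formula for $W_g$ actually uses.
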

\begin{proof}
First we show that for $g \in G$ and $D \in \text{Der}(A\rtimes_\alpha G)$, $V_g D$ is a derivation. Then for $a,b \in A \rtimes_\alpha G$, one has 
    \begin{align*}
        (V_gD)(ab) &= u_g^* \cdot D(\alpha_g(ab)) \cdot u_g\\
        &= ( u_g^* \otimes u_g^\circ)  D( \alpha_g(a) \alpha_g(b))\\
        &= ( u_g^* \otimes u_g^\circ) [D( \alpha_g(a)) \cdot \alpha_g(b) + \alpha_g(a) \cdot D (\alpha_g(b))]\\
        &= ( u_g^* \otimes u_g^\circ)[ ( u_e \otimes (u_gbu_g^*)^\circ)D( \alpha_g(a))  + (u_gau_g^* \otimes u_e^\circ) D (\alpha_g(b))]\\
        &= ( u_e \otimes b^\circ) ( u_g^* \otimes u_g^\circ)D( \alpha_g(a))  + (a \otimes u_e^\circ)(u_g^* \otimes u_g^\circ) D (\alpha_g(b))]\\
        &=V_gD(a) \cdot b +a \cdot V_gD(b). 
    \end{align*}

Secondly we show that $V_g$ is a unitary for all $g \in G$. Indeed, for $D, D' \in \text{Der}(A\rtimes_\alpha G,\tau)$ and using $\langle \cdot, \cdot \rangle_Y = \langle \cdot , \cdot \rangle_{\alpha_g(Y)}$ in the last equality, it follows that
    \begin{align*}
        \langle V_g(D), D' \rangle_Y &= \sum_{y \in Y} \langle u_g^*\cdot D(\alpha_g(y)) \cdot u_g, D'(y) \rangle\\
        &= \sum_{y \in Y}  \langle (u_g^*\otimes u_g^\circ) D(\alpha_g(y)) , D'(y) \rangle\\
        &= \sum_{y \in Y} \langle  D(\alpha_g(y)) , (u_g\otimes (u_g^*)^\circ) D'(y) \rangle\\
        &= \sum_{y \in Y}  \langle D(\alpha_g(y)) , V_{g^{-1}} D'(\alpha_g(y)) \rangle\\
        &= \langle D, V_{g^{-1}}D' \rangle_{\alpha_g(Y)}\\
        &= \langle D, V_{g^{-1}}D' \rangle_{Y}.
    \end{align*}

Next, for $m \in ((A\rtimes_\alpha G) \otimes (A \rtimes_\alpha G)^\circ)''$, $D \in \text{Der}(A\rtimes_\alpha G,\tau)$ and $b \in A \rtimes_\alpha G$,  one has
 $$[V_g(D \cdot m)](b)= u_g^* \cdot (D \cdot m )(\alpha_g(b)) \cdot u_g =(u_g^* \otimes u_g^\circ) J_{\tau \otimes \tau^\circ} m^* J_{\tau \otimes \tau^\circ} D(\alpha_g(b))$$
and by $J_{\tau \otimes \tau^\circ} m^*J_{\tau \otimes \tau^\circ} \in ((A \rtimes_\alpha G) \otimes (A \rtimes_\alpha G)^\circ)'$, we have
 $$[V_g(D \cdot m)](b)= J_{\tau \otimes \tau^\circ} m^* J_{\tau \otimes \tau^\circ}(u_g^* \otimes u_g^\circ) D(\alpha_g(b))= (V_gD(b))\cdot m.$$
Thus for each $g \in G$, $V_g$ commutes with the right $((A\rtimes_\alpha G) \otimes (A \rtimes_\alpha G)^\circ)''$-action on $\text{Der}(A \rtimes_\alpha G,\tau)$.

Lastly, we show that for each $g \in G$,  $V_g\text{Der}_{\text{FR},Y}(A\rtimes_\alpha G,\tau) \subset \text{Der}_{\text{FR},Y}(A\rtimes_\alpha G,\tau)$. For each $y \in Y$ and $g \in G$, one has $\alpha_{g}(y) = \lambda_{g,y} y$. For $D \in \text{Der}(A\rtimes_\alpha G,\tau)$, we get
 $$ \phi_Y(V_g D)=( ( u_g^* \otimes u_g^\circ) \lambda_{g,y}D(y))_{y \in Y} = ( \delta_{y=y'}( u_g^* \otimes u_g^\circ) \lambda_{g,y})_{y,y'} \phi_Y(D)$$
and it follows that 
 $$\phi_Y V_g \phi^{-1}_Y = ( \delta_{y=y'}( u_g^* \otimes u_g^\circ) \lambda_{g,y})_{y,y'} \in M_{|Y|}( ((A\rtimes_\alpha G) \otimes (A \rtimes_\alpha G)^\circ),\tau \otimes \tau^\circ).$$ 
Notice that
 $$\phi_Y V_g \phi^{-1}_Y (\phi_Y(\text{Der}(A\rtimes_\alpha G,\tau)))\subset \phi_Y(\text{Der}(A\rtimes_\alpha G,\tau))$$ 
for all $g \in G$, which means that the subspace is reducing for $\phi_YV_g\phi_Y^{-1}$ and so it commutes with $[\phi_Y(\text{Der}(A\rtimes_\alpha G,\tau))]$. Since  $$\phi_Y V_g \phi^{-1}_Y (L^2(A \rtimes_\alpha G, \tau) \odot L^2((A \rtimes_\alpha G)^\circ, \tau^\circ))^Y\subset (L^2(A \rtimes_\alpha G, \tau) \odot L^2((A \rtimes_\alpha G)^\circ, \tau^\circ))^Y, $$ 
it follows that $V_g D \in \text{Der}_{\text{FR},Y}(A\rtimes_\alpha G,\tau)$, when $D \in\text{Der}_{\text{FR},Y}(A\rtimes_\alpha G,\tau)$.
\end{proof}

\begin{rem}
Let $Y \subset A\rtimes_\alpha G$ be any finite self-adjoint subset satisfying $A\rtimes_\alpha G = \C\langle Y\rangle$. In particular, $Y$ need not be scaled under the action of $G$. In this case, $V_g$ is no longer a unitary with respect to $\langle \cdot , \cdot \rangle_Y$ for all $g \in G$. But following the proof above, one  still has $\langle V_g(D), D'\rangle_Y = \langle D , V_{g^{-1}}D'\rangle_{\alpha_g(Y)}$.  Since $A \rtimes_\alpha G = \C\langle \alpha_g(Y)\rangle$, for all $y' \in Y$ there exists a polynomial $p_{y'} \in \C\langle T_Y \rangle$ such that $\alpha_{g}(y') = p_{y'}(Y)$. Then we get
$$ (V_gD) (y') =u_g^* \cdot D(\alpha_g(y')) \cdot u_g =( u_g^* \otimes u_g^\circ) D(p_{y'}(Y)) = \sum_{y \in Y} (u_g^*\otimes u_g^\circ)\partial_{y}(p_{y'})(Y) D(y). $$
From the last equality above
$$\phi_Y V_g\phi^{-1}_Y = ( (u_g^*\otimes u_g^\circ)\partial_{y}(p_{y'})(Y) )_{y',y \in Y} \in M_{|Y|} ((A \rtimes_\alpha G)\otimes (A \rtimes_\alpha G)^\circ).$$ 
Notice that 
$$\phi_Y V_g \phi^{-1}_Y (\phi_Y(\text{Der}(A\rtimes_\alpha G,\tau))) \subset \phi_Y(\text{Der}(A\rtimes_\alpha G,\tau))
$$ for all $g \in G$, which means that the subspace is reducing for $\phi_YV_g\phi_Y^{-1}$ and so it commutes with $[\phi_Y(\text{Der}(A\rtimes_\alpha G,\tau))]$. Since $$\phi_Y V_g \phi^{-1}_Y (L^2(A \rtimes_\alpha G, \tau) \odot L^2((A \rtimes_\alpha G)^\circ, \tau^\circ))^Y\subset (L^2(A \rtimes_\alpha G, \tau) \odot L^2((A \rtimes_\alpha G)^\circ, \tau^\circ))^Y, $$ 
it follows that $V_g D \in \text{Der}_{\text{FR},Y}(A\rtimes_\alpha G,\tau)$, when $D \in\text{Der}_{\text{FR},Y}(A\rtimes_\alpha G,\tau)$.
\end{rem}

\begin{lem}\label{Lem:InnDer_cov}
Let $G\stackrel{\alpha}{\curvearrowright}(M,\tau)$ be a trace-preserving action of a finite abelian group $G$ on a tracial von Neumann algebra and let $A\subset M$ be a finitely generated unital $*$-subalgebra which is globally invariant under $\alpha$. For $D \in \emph{Der}_{[ \,\cdot\,, Y]}(\C[G] \subset A\rtimes_\alpha G,\tau)$, there exists $(D_\lambda)_{\lambda \in \Lambda}\subset \emph{InnDer}( \C[G] \subset A\rtimes_\alpha G,\tau)$ that almost weakly approximates $D$, where $Y$ is a finite self-adjoint generating set for $A \rtimes_\alpha G$ such that $Y$ is scaled under $\alpha$.
\end{lem}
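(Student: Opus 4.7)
The plan is to produce, by averaging over $G$, inner derivations that vanish on $\C[G]$ and still almost weakly approximate $D$. Since $D \in \text{Der}_{[\,\cdot\,,Y]}(A\rtimes_\alpha G,\tau)$, there is a net of inner derivations $D_\lambda = [\,\cdot\,,-\xi_\lambda]$ with $\xi_\lambda \in L^2((A\rtimes_\alpha G)\otimes (A\rtimes_\alpha G)^\circ, \tau\otimes\tau^\circ)$ that almost weakly approximates $D$, but generically these do not vanish on $\C[G]$. To repair this, I would set
\[ \tilde\xi_\lambda := \frac{1}{|G|}\sum_{g \in G} u_g \xi_\lambda u_g^*, \qquad \tilde D_\lambda := [\,\cdot\,,-\tilde\xi_\lambda], \]
and verify by a reindexing $g \mapsto h g$ that $u_h \tilde\xi_\lambda u_h^* = \tilde\xi_\lambda$ for every $h \in G$, so that $\tilde\xi_\lambda$ is $\C[G]$-central and hence $\tilde D_\lambda \in \text{InnDer}(\C[G] \subset A\rtimes_\alpha G,\tau)$.

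The next step is to recognize this averaging at the level of derivations. A short direct computation gives
\[ u_g^*[\alpha_g(b),-\xi_\lambda]u_g = [b,-u_g^*\xi_\lambda u_g], \]
so $V_g D_\lambda = [\,\cdot\,,-u_g^*\xi_\lambda u_g]$, where $V_g$ is the unitary from Lemma \ref{Lem:Unitary_map}. Reindexing yields $\tilde D_\lambda = P D_\lambda$ with $P := \frac{1}{|G|}\sum_{g \in G} V_g$. From Lemma \ref{Lem:Unitary_map}, each $V_g$ is unitary with respect to $\langle\cdot,\cdot\rangle_Y$, commutes with the right $((A\rtimes_\alpha G)\otimes (A\rtimes_\alpha G)^\circ)''$-action, and leaves $\text{Der}_{\text{FR},Y}(A\rtimes_\alpha G,\tau)$ invariant. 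Consequently $P$ is a self-adjoint idempotent (using $V_g^* = V_{g^{-1}}$) with the same invariance properties.

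The final step combines three observations. By Lemma \ref{Lem:fin_dim_and_cov}, the condition $D|_{\C[G]} \equiv 0$ is equivalent to the covariance $D(u_g b u_g^*) = u_g\cdot D(b)\cdot u_g^*$, i.e. to $V_g D = D$ for all $g$; hence $PD = D$. Then for any $d' \in \text{Der}_{\text{FR},Y}(A\rtimes_\alpha G,\tau)$,
\[ \langle D - \tilde D_\lambda, d'\rangle_Y \;=\; \langle P(D - D_\lambda), d'\rangle_Y \;=\; \langle D - D_\lambda, P d'\rangle_Y, \]
and since $P d' \in \text{Der}_{\text{FR},Y}(A\rtimes_\alpha G,\tau)$ while $(D_\lambda)$ almost weakly approximates $D$, the right-hand side tends to $0$. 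Therefore $(\tilde D_\lambda)$ is the required approximating net.

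The only anticipated obstacle is ensuring that the averaging map $P$ interacts correctly with everything in sight (self-adjointness with respect to $\langle\cdot,\cdot\rangle_Y$, compatibility with the right action, and preservation of $\text{Der}_{\text{FR},Y}$); all of this is packaged by Lemma \ref{Lem:Unitary_map}, which is precisely where the hypothesis that $G$ is abelian is needed, since it guarantees a finite self-adjoint generating set $Y$ for $A\rtimes_\alpha G$ that is scaled under $\alpha$.
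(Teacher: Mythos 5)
Your proof is correct and follows essentially the same route as the paper: the paper likewise averages the net $V_g D_\lambda$ over $G$, uses Lemma \ref{Lem:fin_dim_and_cov} to identify vanishing on $\C[G]$ with $V_g$-invariance, and uses the adjoint relation $\langle V_g D, D'\rangle_Y = \langle D, V_{g^{-1}}D'\rangle_Y$ together with the $\text{Der}_{\text{FR},Y}$-invariance from Lemma \ref{Lem:Unitary_map} to pass the average onto the test derivation. Your only (harmless) repackaging is to phrase the average as a self-adjoint idempotent $P=\frac{1}{|G|}\sum_g V_g$ and to identify $PD_\lambda$ explicitly as the inner derivation implemented by the conjugation-averaged vector $\tilde\xi_\lambda$.
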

\begin{proof}
In light of Lemma \ref{Lem:fin_dim_and_cov}, we need to find a net of inner derivation that almost weakly approximates $D$ such that each inner derivation satisfies the covariant condition, where $D \in  \text{Der}_{[ \,\cdot\,, Y]}(\C[G] \subset A\rtimes_\alpha G,\tau)$. Since $D \in \text{Der}_{[ \,\cdot\,, Y]}(\C[G] \subset A\rtimes_\alpha G,\tau)$, there exists $(D_\lambda)_{\lambda \in \Lambda} \subset \text{InnDer}(A \rtimes_\alpha G,\tau)$ that almost weakly approximates $D$. First, we show that for all $g \in G$, $\{V_g D_\lambda \}_{\lambda \in \Lambda}$ almost weakly approximates $D$. For $g \in G$ and $D'\in \text{Der}_{\text{FR},Y}(A\rtimes_\alpha G,\tau)$, we have by Lemma \ref{Lem:Unitary_map} that $V_{g^{-1}} D' $ is a derivation on $A \rtimes_\alpha G$ and $V_{g^{-1}}D' \in \text{Der}_{\text{FR},Y}(A \rtimes_\alpha G,\tau)$. So using $D \in  \text{Der}(\C[G] \subset A\rtimes_\alpha G,\tau)$ in the second equality,
\begin{align*}
    \langle D-V_g D_\lambda , D'\rangle_Y&= \sum_{y \in Y} \langle D(y)- (V_gD_\lambda)(y) , D'(y)\rangle\\
    &= \sum_{y \in Y} \langle u_g^* \cdot D( \alpha_g(y)) \cdot u_g- u_g^*\cdot D_\lambda (\alpha_g (y))\cdot u_g, D'(y)\rangle\\
    &= \sum_{y \in Y} \langle (D- D_\lambda) ( \alpha_g (y) ),(u_g\otimes (u_g^*)^\circ) D'(y)\rangle\\
    &= \sum_{y \in Y} \langle (D- D_\lambda) ( \alpha_g (y) ),(V_{g^{-1}}D')(\alpha_g(y))\rangle\\
    &=  \langle (D- D_\lambda) ,(V_{g^{-1}}D')\rangle_{\alpha_g(Y)}\\
    &=  \langle (D- D_\lambda) ,(V_{g^{-1}}D')\rangle_{Y}.
\end{align*}
Thus, for all $g\in G$, $\{V_gD_\lambda \}_{\lambda \in \Lambda}$ almost weakly approximates $D$. Hence, we have that $ \left\{\frac{1}{|G|}\sum_{g \in G}V_g D_\lambda \right\}\in \text{InnDer}(\C[G] \subset A\rtimes_\alpha G,\tau)$  almost weakly approximates $D \in \text{Der}_{[ \,\cdot\,, Y]}(\C[G] \subset A\rtimes_\alpha G,\tau)$.
\end{proof}

Notice that the following lemma does not need $G$ to be abelian.
\begin{lem}\label{Lem:Maps_FR}
Let $G\stackrel{\alpha}{\curvearrowright}(M,\tau)$ be a trace-preserving action of a finite group $G$ on a tracial von Neumann algebra and let $A\subset M$ be a finitely generated unital $*$-subalgebra which is globally invariant under $\alpha$ with $X \subset A$ a finite self-adjoint subset satisfying $A =\C\langle X \rangle$ and set $Y = X \cup \{ u_g : g \in G\}$. Then for $h \in G$, we have
\begin{enumerate}
    \item  $d^h \in \emph{Der}_{\emph{FR},Y}(\C[G] \subset A\rtimes_\alpha G,\tau)$, whenever $d \in \emph{Der}_{\emph{FR},X}(A,\tau)$; and
    \item  $D_h \in \emph{Der}_{\emph{FR},X}(A,\tau)$, whenever $D \in \emph{Der}_{\emph{FR},Y}(A \rtimes_\alpha G,\tau)$. 
\end{enumerate}
\end{lem}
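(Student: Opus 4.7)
The plan is to exhibit explicit algebraic-tensor lifts and verify the projection identities.

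For part~(1), given $d\in\text{Der}_{\text{FR},X}(A,\tau)$, I fix $\xi=(\xi_x)_{x\in X}\in(L^2(A,\tau)\odot L^2(A^\circ,\tau^\circ))^X$ with $\phi_X(d)=[\phi_X\text{Der}(A,\tau)]\xi$, and propose the lift $\xi'=(\xi'_y)_{y\in Y}$ with $\xi'_{u_k}:=0$ for $k\in G$ and
\[
\xi'_x := J_{\tau\otimes\tau^\circ}(u_e\otimes(u_h^*)^\circ)J_{\tau\otimes\tau^\circ}\sum_{g\in G}(u_g^*\otimes u_g^\circ)\sum_{x'\in X}\partial_{x'}(p_{g,x})(X)\,\xi_{x'},
\]
where $p_{g,x}\in\C\langle T_X\rangle$ satisfies $p_{g,x}(X)=\alpha_g(x)$. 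This mirrors the formula for $d^h(x)$ in Lemma~\ref{Lem:Der_A_to_B} with each $d(x')$ replaced by $\xi_{x'}$. Membership $\xi'\in(L^2(A\rtimes_\alpha G,\tau)\odot L^2((A\rtimes_\alpha G)^\circ,\tau^\circ))^Y$ is clear: left multiplication by $\partial_{x'}(p_{g,x})(X)\in A\otimes A^\circ$, by $(u_g^*\otimes u_g^\circ)$, and by $J_{\tau\otimes\tau^\circ}(u_e\otimes(u_h^*)^\circ)J_{\tau\otimes\tau^\circ}$ each preserve the algebraic tensor.

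For part~(2), given $D\in\text{Der}_{\text{FR},Y}(A\rtimes_\alpha G,\tau)$ and $\chi\in(L^2(A\rtimes_\alpha G,\tau)\odot L^2((A\rtimes_\alpha G)^\circ,\tau^\circ))^Y$ with $\phi_Y(D)=[\phi_Y\text{Der}(A\rtimes_\alpha G,\tau)]\chi$, the candidate is
\[
\xi'_x := J_{\tau\otimes\tau^\circ}(u_e\otimes u_h^\circ)J_{\tau\otimes\tau^\circ}\,p_{e,h}\,\chi_x,\qquad x\in X.
\]
Expanding $\chi_x$ in the decomposition $L^2((A\rtimes_\alpha G)\otimes(A\rtimes_\alpha G)^\circ)=\bigoplus_{g,k}L^2(A\otimes A^\circ)(u_g\otimes u_k^\circ)$, the projection $p_{e,h}$ extracts a summand of the form $\eta(u_e\otimes u_h^\circ)$ with $\eta\in L^2(A,\tau)\odot L^2(A^\circ,\tau^\circ)$, and $J_{\tau\otimes\tau^\circ}(u_e\otimes u_h^\circ)J_{\tau\otimes\tau^\circ}$ carries it isometrically to $L^2(A,\tau)\odot L^2(A^\circ,\tau^\circ)$ sitting as the $(e,e)$-component. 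Thus $\xi'\in(L^2(A,\tau)\odot L^2(A^\circ,\tau^\circ))^X$.

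The heart of both proofs is the projection identity $[\phi_Y\text{Der}(\C[G]\subset A\rtimes_\alpha G,\tau)]\xi'=\phi_Y(d^h)$ in part~(1), and $[\phi_X\text{Der}(A,\tau)]\xi'=\phi_X(D_h)$ in part~(2). In each case the difference between $\xi'$ and the true image is an explicit error expression—built from $(I-[\phi_X\text{Der}(A,\tau)])\xi$ in part~(1), and from $(I-[\phi_Y\text{Der}(A\rtimes_\alpha G,\tau)])\chi$ in part~(2)—and I must show this error is orthogonal to the relevant subspace. To do so, I test against an arbitrary derivation $\nu\in\text{Der}(\C[G]\subset A\rtimes_\alpha G,\tau)$ (respectively $d'\in\text{Der}(A,\tau)$), decompose $\nu=\sum_{g'}(d_{g'})^{g'}$ via Theorem~\ref{Thm:Bij_Map}(3), and expand $(d_{g'})^{g'}(x)$ using Lemma~\ref{Lem:rel_of_p_gh}. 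The crucial combinatorial observation is that the summands of $(d_{g'})^{g'}(x)$ lie in the components $L^2(A\otimes A^\circ)(u_{k^{-1}}\otimes u_{g'k}^\circ)$ as $k$ ranges over $G$, so projecting back to the $(e,e)$-component forces $k=e$ and $g'=h$. A single term survives, pairing the error with an element of the good subspace, and vanishes by construction of $\xi$ (respectively $\chi$).

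The main obstacle will be the combinatorial bookkeeping: because $p_{e,h}$ does not commute with the left $\C[G]$-action (Lemma~\ref{Lem:rel_of_p_gh}(1)), the order in which the Tomita twists, the group-unitary tensors, and $p_{e,h}$ appear needs to be tracked precisely. The cleanest organisation probably isolates the bounded linear map $\mathcal{T}_h:(L^2(A\otimes A^\circ,\tau\otimes\tau^\circ))^X\to (L^2((A\rtimes_\alpha G)\otimes(A\rtimes_\alpha G)^\circ,\tau\otimes\tau^\circ))^Y$ given by the same formula that defines $\xi'$ in part~(1), notes that $\mathcal{T}_h\circ\phi_X=\phi_Y\circ((-)^h)$, and reduces both parts of the lemma to the claim that $\mathcal{T}_h$ carries $\phi_X(\text{Der}(A,\tau))^\perp$ into $\phi_Y(\text{Der}(\C[G]\subset A\rtimes_\alpha G,\tau))^\perp$; part~(2) then follows by taking adjoints via $\langle\mathcal{T}_h\cdot,\cdot\rangle_Y=\langle\cdot,\mathcal{T}_h^*\cdot\rangle_X$.
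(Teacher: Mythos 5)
Your architecture is the same as the paper's (push the algebraic representative through the formula defining $d^h$, resp.\ $D_h$), and you have correctly isolated the point that the paper's own one‑line proof elides: preservation of $L^2(A,\tau)\odot L^2(A^\circ,\tau^\circ)$ is not enough, one must also show that the transferred representative projects back onto $\phi_Y(d^h)$, i.e.\ that $\mathcal{T}_h$ carries $\ker[\phi_X\text{Der}(A,\tau)]$ into $\ker[\phi_Y\text{Der}(A\rtimes_\alpha G,\tau)]$. The gap is in your resolution of that point. Carrying out the pairing you describe, with $M_g:=\bigl(\partial_{x'}(p_{g,x})(X)\bigr)_{x,x'}\in M_{|X|}(A\otimes A^\circ)$ the free‑difference‑quotient matrix of the words $p_{g,x}(X)=\alpha_g(x)$ and $P:=[\phi_X\text{Der}(A,\tau)]$, the (correct) component bookkeeping leaves
$$\langle \mathcal{T}_h(1-P)\xi,\ \phi_Y(\nu)\rangle_Y=\sum_{g\in G}\bigl\langle M_g(1-P)\xi,\ M_g\phi_X(\nu_h)\bigr\rangle=\Bigl\langle (1-P)\xi,\ \sum_{g\in G}M_g^*M_g\,\phi_X(\nu_h)\Bigr\rangle.$$
The vector $(1-P)\xi$ is orthogonal to $\phi_X\text{Der}(A,\tau)$, but $\sum_g M_g^*M_g\,\phi_X(\nu_h)$ is not of the form $\phi_X(\text{derivation})$: left multiplication by a matrix over $A\otimes A^\circ$ does not preserve $\phi_X\text{Der}(A,\tau)$ (that subspace is invariant under the \emph{diagonal} action of $(A\otimes A^\circ)'$, which is why $P\in M_{|X|}((A\otimes A^\circ)'')$, and there is no reason for $P$ to commute with $M_g^*M_g$). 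So the single surviving term does not ``vanish by construction of $\xi$''. The argument does close when $X$ is scaled by $\alpha$ --- then each $M_g$ is a diagonal unitary scalar matrix and $\sum_g M_g^*M_g=|G|\cdot 1$ --- which is the only setting in which the lemma is invoked (Section 5 takes $G$ abelian and $X=X_{\hat G}$); any repair should make that hypothesis explicit rather than claim the general case.

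The reduction of part (2) to part (1) by adjoints is a second problem. One has $\mathcal{T}_h^*=|G|\cdot\bigl(\phi_Y(D)\mapsto\phi_X(D_h)\bigr)$ only on $\phi_Y\text{Der}(\C[G]\subset A\rtimes_\alpha G,\tau)$ (and only in the scaled case), not on the ambient space where the error $(1-[\phi_Y\text{Der}(A\rtimes_\alpha G,\tau)])\chi$ lives. What part (2) actually requires is that the adjoint of $\chi\mapsto\bigl(J_{\tau\otimes\tau^\circ}(u_e\otimes u_h^\circ)J_{\tau\otimes\tau^\circ}p_{e,h}\chi_x\bigr)_{x\in X}$ map $\phi_X\text{Der}(A,\tau)$ into $\phi_Y\text{Der}(A\rtimes_\alpha G,\tau)$; but that adjoint sends $\phi_X(\delta)$ to the $Y$‑tuple with entries $\delta(x)(u_e\otimes u_h^\circ)$ on $X$ and $0$ on the $u_k$, and comparing $\Delta(u_gxu_g^*)=u_g\cdot\Delta(x)\cdot u_g^*$ (component $(g,hg^{-1})$) with $\Delta(\alpha_g(x))=\sum_{x'}\partial_{x'}(p_{g,x})(X)\Delta(x')$ (component $(e,h)$) shows no nonzero derivation of $A\rtimes_\alpha G$ has that form. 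So part (2) needs its own direct pairing argument against test derivations $(d')^h$, as in the proof of Theorem \ref{Thm:Delta_Dim}, rather than an appeal to $\langle\mathcal{T}_h\cdot,\cdot\rangle_Y=\langle\cdot,\mathcal{T}_h^*\cdot\rangle_X$.
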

\begin{proof}\hfill
\begin{enumerate}
    \item By Theorem \ref{Thm:Bij_Map} and using $\phi_X^{-1}$ and $\phi_Y$, the map $d \mapsto d^h$ can defined on $\phi_X\text{Der}(A,\tau)$ into $\phi_Y\text{Der}(A\rtimes_\alpha G,\tau)$. Since 
     $$J_{\tau\otimes \tau^\circ}(u_e \otimes u_h^\circ)J_{\tau\otimes \tau^\circ}(u_g^*\otimes u_g) (L^2(A,\tau) \odot L^2(A^\circ,\tau^\circ)) \subset L^2(A\rtimes_\alpha G,\tau)\odot L^2((A \rtimes_\alpha G)^\circ,\tau^\circ),$$
    it follows that $d^h \in \text{Der}_{\text{FR},Y}(A\rtimes_\alpha G,\tau)$, whenever $d \in\text{Der}_{\text{FR},X}(A,\tau)$.
    \item By Theorem \ref{Thm:Bij_Map} and using $\phi_Y^{-1}$ and $\phi_X$, the map $D \mapsto D_h$ can defined on $\phi_Y\text{Der}(A,\tau)$ into $\phi_X\text{Der}(A\rtimes_\alpha G,\tau)$. Since 
     $$J_{\tau\otimes \tau^\circ}(u_e \otimes u_h^\circ)J_{\tau\otimes \tau^\circ}p_{e,h} (L^2(A\rtimes_\alpha G,\tau)\odot L^2((A \rtimes_\alpha G)^\circ,\tau^\circ)) \subset (L^2(A,\tau) \odot L^2(A^\circ,\tau^\circ)),$$
     it follows that $d^h \in \text{Der}_{\text{FR},Y]}(A\rtimes_\alpha G,\tau)$, whenever $d \in\text{Der}_{\text{FR},X]}(A,\tau)$. \qedhere
\end{enumerate}
\end{proof}

The issue that arises trying to prove Theorem \ref{Thm:Delta_Dim} is the fact that $\text{Der}_{[\,\cdot\, , X]}(A,\tau)$ depends on the generating set. In the first map $d \to d^h$ of Theorem \ref{Thm:Bij_Map}, the inner products that we have are $\langle \cdot, \cdot \rangle_{\alpha_g(X)}$ for each $g \in G$. However, since $G$ is abelian, we have that $\langle\cdot, \cdot \rangle_{\alpha_g(X)} =\langle\cdot, \cdot \rangle_{X}$ and this further implies that given a net of inner derivation that almost weakly approximates a derivation with respect to $\langle\cdot, \cdot \rangle_{X}$, then the same net almost weakly approximates the same derivation with respect to $\langle\cdot, \cdot \rangle_{\alpha_g(X)}$.

\begin{thm}[{Theorem \ref{Thm:C}}] \label{Thm:Delta_Dim}
Let $G\stackrel{\alpha}{\curvearrowright}(M,\tau)$ be a trace-preserving action of a finite abelian group $G$ on a tracial von Neumann algebra and let $A\subset M$ be a finitely generated unital $*$-subalgebra which is globally invariant under $\alpha$. Then 
 $$\Delta( \C[G] \subset A \rtimes_\alpha G,\tau) = \frac{1}{|G|}\Delta(A,\tau).$$
Furthermore, we have
 $$\Delta(A \rtimes_\alpha G,\tau) -1 = \frac{1}{|G|}(\Delta(A,\tau)-1).$$
\end{thm}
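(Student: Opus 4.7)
The plan is to adapt the proof of Theorem \ref{Thm:Anot_subs}, replacing the subspace $\text{Der}_c$ with $\text{Der}_{[\,\cdot\,, X]}$, and then invoke Lemma \ref{Lem:CS_Correp_Der} to convert the resulting dimension formula into a statement about $\Delta$. Since $G$ is abelian, I use the construction immediately preceding the theorem to pick a finite self-adjoint generating set $X\subset A$ that is scaled under $\alpha$, and set $Y=X\cup\{u_g:g\in G\}$, which is also $\alpha$-scaled. With these choices one has $\langle\cdot,\cdot\rangle_{\alpha_g(Y)}=\langle\cdot,\cdot\rangle_Y$ and each $V_g$ from Lemma \ref{Lem:Unitary_map} is a genuine unitary with respect to $\langle\cdot,\cdot\rangle_Y$; the analogous statement holds with $X$ in place of $Y$.

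The heart of the proof is to show that the extension map $d\mapsto d^h$ and the restriction map $D\mapsto D_h$ from Theorem \ref{Thm:Bij_Map} restrict to maps between $\text{Der}_{[\,\cdot\,,X]}(A,\tau)$ and $\text{Der}_{[\,\cdot\,,Y]}(\C[G]\subset A\rtimes_\alpha G,\tau)$. For the extension direction, given $d\in\text{Der}_{[\,\cdot\,,X]}(A,\tau)$ with a net $(d_\lambda=[\,\cdot\,,\xi_\lambda])\subset\text{InnDer}(A,\tau)$ almost weakly approximating $d$, I would form the $V_g$-averaged inner derivations $\tilde D_\lambda:=\tfrac{1}{|G|}\sum_{g\in G} V_g[\,\cdot\,,R_h\xi_\lambda]$ on $A\rtimes_\alpha G$, where $R_h:=J(u_e\otimes(u_h^*)^\circ)J$ denotes right bimodule action by $u_h$; these are inner and covariant (hence vanish on $\C[G]$). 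The convergence $\tilde D_\lambda\to d^h$ in the $\rho_{Y,D'}$-topology reduces to the duality identity
$$\langle d^h, D'\rangle_Y=|G|\,\langle d,(D')_h\rangle_X\qquad\text{for every }D'\in\text{Der}_{\text{FR},Y}(A\rtimes_\alpha G,\tau),$$
which I would derive by expanding $d^h(x)$ using the $\alpha$-scaled property of $X$ and collapsing the resulting sum via the pairwise orthogonality of the projections $\{p_{g,k}\}_{g,k\in G}$; together with Lemma \ref{Lem:Maps_FR}(2), which places $(D')_h$ in $\text{Der}_{\text{FR},X}(A,\tau)$, this transfers the almost weak approximation from the $A$-side to the $A\rtimes_\alpha G$-side.

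For the restriction direction, I would start with $D\in\text{Der}_{[\,\cdot\,,Y]}(\C[G]\subset A\rtimes_\alpha G,\tau)$ and use Lemma \ref{Lem:InnDer_cov} to obtain a net of covariant inner derivations $D_\lambda=[\,\cdot\,,\eta_\lambda]$ with $\eta_\lambda\in L^2_{\C[G]}((A\rtimes_\alpha G)\otimes(A\rtimes_\alpha G)^\circ)$ almost weakly approximating $D$. A direct computation using the decomposition $\eta_\lambda=\sum_{g_1,g_2}p_{g_1,g_2}\eta_\lambda$ and the $\C[G]$-centrality of $\eta_\lambda$ shows $(D_\lambda)_h=[\,\cdot\,,\tilde\eta_\lambda]$ for $\tilde\eta_\lambda:=J(u_e\otimes u_h^\circ)J\,p_{e,h}\eta_\lambda\in L^2(A\otimes A^\circ)$, so $(D_\lambda)_h\in\text{InnDer}(A,\tau)$. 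The almost weak convergence $(D_\lambda)_h\to D_h$ then follows from the dual identity $\langle D,d'^h\rangle_Y=|G|\,\langle D_h,d'\rangle_X$ for $d'\in\text{Der}_{\text{FR},X}(A,\tau)$, using $d'^h\in\text{Der}_{\text{FR},Y}$ by Lemma \ref{Lem:Maps_FR}(1).

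Combining these two directions with Theorem \ref{Thm:Bij_Map}(3) yields a right $(A\otimes A^\circ)''$-module isomorphism between $\text{Der}_{[\,\cdot\,,Y]}(\C[G]\subset A\rtimes_\alpha G,\tau)$ and $\bigoplus_{g\in G}(\text{Der}_{[\,\cdot\,,X]}(A,\tau))_{1\otimes\alpha_{g^{-1}}}$. Since $\C[G]$ is finite dimensional, $\overline{\text{Der}_{[\,\cdot\,,X]}(\C[G],\tau)}=\text{Der}(\C[G],\tau)$, so applying Lemma \ref{Lem:Decomp} with $B=\C[G]$ and Lemma \ref{Lem:Der_C[G]} reproduces the computation of Corollary \ref{Cor:Bij_Map_Formula} for these subspaces, and Lemma \ref{Lem:CS_Correp_Der} then identifies the resulting formulas as the two displayed assertions about $\Delta$. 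The main obstacle is the duality identity $\langle d^h,D'\rangle_Y=|G|\langle d,(D')_h\rangle_X$ for arbitrary $D'\in\text{Der}_{\text{FR},Y}$: it is straightforward when $D'\in\text{Der}(\C[G]\subset A\rtimes_\alpha G,\tau)$ using the decomposition from Theorem \ref{Thm:Bij_Map}(3), but extending it to general $D'$ requires that the extra $\C[G]$-components of $D'$ contribute nothing to the pairing with $d^h$, which is precisely the orthogonality afforded by the $\alpha$-scaled choice of $X$. This is where the abelian hypothesis on $G$ is essential.
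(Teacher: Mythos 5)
Your overall architecture is the same as the paper's: choose an $\alpha$-scaled generating set $X$ for $A$ (possible because $G$ is abelian), set $Y=X\cup\{u_g:g\in G\}$, show that the maps $d\mapsto d^h$ and $D\mapsto D_h$ of Theorem \ref{Thm:Bij_Map} restrict to $\text{Der}_{[\,\cdot\,,X]}(A,\tau)$ and $\text{Der}_{[\,\cdot\,,Y]}(\C[G]\subset A\rtimes_\alpha G,\tau)$, and conclude via the decomposition of Theorem \ref{Thm:Bij_Map}(3), Corollary \ref{Cor:Bij_Map_Formula} and Lemma \ref{Lem:CS_Correp_Der}. Your restriction direction is essentially the paper's: covariant inner approximants from Lemma \ref{Lem:InnDer_cov}, then the identity $\langle D-D_\lambda,(d')^h\rangle_Y=|G|\langle D_h-(D_\lambda)_h,d'\rangle_X$, which is valid precisely because $D$ and $D_\lambda$ vanish on $\C[G]$.

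The gap is in the extension direction. The duality identity $\langle d^h,D'\rangle_Y=|G|\langle d,(D')_h\rangle_X$ on which you rest that direction is false for general $D'\in\text{Der}_{\text{FR},Y}(A\rtimes_\alpha G,\tau)$. Writing $d^h(x)=J_{\tau\otimes\tau^\circ}(u_e\otimes(u_h^*)^\circ)J_{\tau\otimes\tau^\circ}\sum_{g}(u_g^*\otimes u_g^\circ)d(\alpha_g(x))$, the $g$-th summand lies in the range of $p_{g^{-1},hg}$, so pairing with $D'(x)$ extracts a \emph{different} component of $D'(x)$ for each $g$; the correct identity is $\langle d^h,D'\rangle_Y=\sum_{g\in G}\langle d,(V_{g^{-1}}D')_h\rangle_X$, and the sum collapses to $|G|\langle d,(D')_h\rangle_X$ only when $V_{g^{-1}}D'=D'$ for all $g$, i.e.\ when $D'$ is covariant, equivalently vanishes on $\C[G]$ (Lemma \ref{Lem:fin_dim_and_cov}) --- which a general element of $\text{Der}_{\text{FR},Y}$ need not do. Your proposed repair (that the ``extra $\C[G]$-components'' of $D'$ pair to zero against $d^h$) fixes the left side but not the right: the inner part $[\,\cdot\,,(1-p)\xi]$ of $D'$ is indeed orthogonal to $d^h$, but it contributes nontrivially to $(D')_h$, so the two sides of your identity change by different amounts and equality fails. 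The actual repair is exactly what Lemmas \ref{Lem:Unitary_map} and \ref{Lem:Maps_FR} are built for: $V_{g^{-1}}$ preserves $\text{Der}_{\text{FR},Y}(A\rtimes_\alpha G,\tau)$ and $(\,\cdot\,)_h$ carries it into $\text{Der}_{\text{FR},X}(A,\tau)$, so each term $\langle d-d_\lambda,(V_{g^{-1}}D')_h\rangle_X$ tends to zero and $(d_\lambda)^h\to d^h$ almost weakly. (A separate, minor slip: your averaged net satisfies $\tfrac{1}{|G|}\sum_{g}V_g[\,\cdot\,,R_h\xi_\lambda]=\tfrac{1}{|G|}(d_\lambda)^h$, which converges to $\tfrac{1}{|G|}d^h$ rather than $d^h$; drop the prefactor, or simply use $(d_\lambda)^h$ itself, which is already inner and vanishes on $\C[G]$.)
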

\begin{proof}
Let $X$ be a generating set for $A$ that is scaled under $\alpha$ and set $Y := X \cup \{ u_g : g \in G\}$. First, we show that $d^h \in \text{Der}_{[ \,\cdot\,, Y]}(A\rtimes_\alpha G ,\tau) $, whenever $d \in \text{Der}_{[ \,\cdot\,, X]}(A,\tau)$ and $h \in G$.  For each $d \in \text{Der}_{[ \,\cdot\,, X]}(A,\tau)$, we have by Lemma \ref{Lem:Der_A_to_B} that $d^h \in \text{Der}(\C[G] \subset A\rtimes_\alpha G)$. Since $d \in \text{Der}_{[ \,\cdot\,, X]}(A,\tau)$, there exists $(d_\lambda)_{\lambda \in \Lambda} \subset \text{InnDer}(A,\tau)$ that almost weakly approximates $d$ and we note that $(d_\lambda)^h \in \text{InnDer}(A\rtimes_\alpha G,\tau)$. Let $D' \in \text{Der}_{\text{FR},Y}(A\rtimes_\alpha G,\tau)$ and by Lemma \ref{Lem:Unitary_map} and Lemma \ref{Lem:Maps_FR}, we know that  $V_g D'\in \text{Der}_{\text{FR},Y}(A\rtimes_\alpha G,\tau)$ and $(V_gD')_h\in\text{Der}_{\text{FR},X}(A,\tau)$ for all $g \in G$, respectively. Since $d^h,(d_\lambda)^h \in \text{Der}(\C[G] \subset A \rtimes_\alpha G)$, we have
\begin{align*}
 \langle d^h -(d_\lambda)^h, D'\rangle_Y &=\sum_{x \in X}\langle (d^h-(d_\lambda)^h)(x), D'(x)\rangle\\
 &=\sum_{x \in X}\sum_{g \in G}\langle (d-d_\lambda)(\alpha_g(x)), J_{\tau\otimes \tau^\circ}(u_e \otimes u_h^\circ)J_{\tau\otimes \tau^\circ} (u_g \otimes (u_g^*)^\circ) D'(x)\rangle   
\end{align*}
 and since $(d-d_\lambda)(a) \in L^2(A\otimes A^\circ, \tau \otimes \tau^\circ)$ for $a \in A$ and by Lemma \ref{Lem:rel_of_p_gh}, the computation above becomes
    \begin{align*}
        \langle d^h -(d_\lambda)^h, D'\rangle_Y &=\sum_{x \in X}\sum_{g \in G}\langle (d-d_\lambda)(\alpha_g(x)), p_{e,e}J_{\tau\otimes \tau^\circ}(u_e \otimes u_h^\circ)J_{\tau\otimes \tau^\circ} (V_{g^{-1}}D')(\alpha_g(x))\rangle\\
        &=\sum_{x \in X}\sum_{g \in G}\langle (d-d_\lambda)(\alpha_g(x)), J_{\tau\otimes \tau^\circ}(u_e \otimes u_h^\circ)J_{\tau\otimes \tau^\circ}p_{e,h} (V_{g^{-1}}D')(\alpha_g(x))\rangle,\\
        &=\sum_{x \in X}\sum_{g \in G}\langle  (d-d_\lambda)(\alpha_g(x)), (V_{g^{-1}}D')_h(\alpha_g(x))\rangle\\
        &=\sum_{g \in G} \langle  d-d_\lambda, (V_{g^{-1}}D')_h\rangle_{\alpha_g(X)}\\
        &=\sum_{g \in G} \langle  d-d_\lambda, (V_{g^{-1}}D')_h\rangle_X.
    \end{align*}
Hence $d^h \in \text{Der}_{[ \,\cdot\,, Y]}(\C[G] \subset A \rtimes_\alpha G, \tau)$. Thus the map $d \mapsto d^h$ in Theorem \ref{Thm:Bij_Map} can be restricted to the following subspaces $\text{Der}_{[ \,\cdot\,, Y]}(\C[G] \subset A \rtimes_\alpha G, \tau)$ and $\text{Der}_{[ \, \cdot \, , X]}(A, \tau)$.

Next, we show that for $h \in G$ and $D \in \text{Der}_{[ \,\cdot\,, Y]}(A\rtimes_\alpha G ,\tau)$, we have $D_h \in \text{Der}_{[ \,\cdot\,, X]}(A,\tau)$. Let $D \in \text{Der}_{[ \,\cdot\,, Y]}(\C[G] \subset A\rtimes_\alpha G,\tau)$ and by Lemma \ref{Lem:Der_B_to_A}, we have that $D_h \in \text{Der}(A,\tau)$. By Lemma \ref{Lem:InnDer_cov}, there exists $(D_\lambda)_{\lambda \in \Lambda} \in \text{InnDer}(\C[G] \subset A \rtimes_\alpha G)$ that almost weakly approximates $D$. Let $d' \in \text{Der}_{\text{FR},X}(A,\tau)$ and by Lemma \ref{Lem:Maps_FR}, we have $d^h \in \text{Der}_{\text{FR},Y}(A\rtimes_\alpha G,\tau)$. Then 
    \begin{align*}
        \langle D_h - (D_\lambda')_h , d' \rangle_X &= \sum_{x \in X} \langle J_{\tau \otimes \tau^\circ}(u_e \otimes u_h^\circ) J_{\tau \otimes \tau^\circ} p_{e,h} (D - D_\lambda')(x) , d'(x) \rangle\\
        &= \sum_{x \in X} \langle (D - D_\lambda')(x) ,p_{e,h} J_{\tau \otimes \tau^\circ}(u_e \otimes (u_h^*)^\circ) J_{\tau \otimes \tau^\circ}d'(x) \rangle,
        \end{align*}
and since $d'(a) \in L^2(A ,\tau) \odot L^2(A^\circ,\tau^\circ)$ for $a \in A$, we can further compute
\begin{align*}
        \langle D_h - (D_\lambda')_h , d' \rangle_X&= \sum_{x \in X} \langle (D - D_\lambda')(x) , J_{\tau \otimes \tau^\circ}(u_e \otimes (u_h^*)^\circ) J_{\tau \otimes \tau^\circ}d'(x) \rangle\\
        &= \frac{1}{|G|}\sum_{g \in G} \sum_{x \in X} \langle (D - D_\lambda')(\alpha_g(x)) , J_{\tau \otimes \tau^\circ}(u_e \otimes (u_h^*)^\circ) J_{\tau \otimes \tau^\circ}d'(\alpha_g(x)) \rangle\\
        &= \frac{1}{|G|}\sum_{g \in G} \sum_{x \in X} \langle (D - D_\lambda')(x) , J_{\tau \otimes \tau^\circ}(u_e \otimes (u_h^*)^\circ) J_{\tau \otimes \tau^\circ}( u_g^* \otimes u_g^\circ)d'(\alpha_g(x)) \rangle\\
        &=\frac{1}{|G|}\sum_{x \in X} \langle (D - D_\lambda')(x) , (d')^h(x) \rangle.
    \end{align*}
Since $D, D'_\lambda, (d')^h \in  \text{Der}(\C[G] \subset A\rtimes_\alpha G,\tau)$, the computation becomes
$$ \langle D_h - (D_\lambda')_h , d' \rangle_X =\frac{1}{|G|} \sum_{y \in Y} \langle (D - D_\lambda')(y) , (d')^h(y) \rangle=\frac{1}{|G|}\langle D - D_\lambda' , (d')^h \rangle_Y.$$
Hence, $D_h \in \text{Der}_{[\cdot ,X]}(A,\tau)$. Thus the map $D \mapsto D_h$ in Theorem 2.5 can be restricted to the following subspaces $\text{Der}_{[ \,\cdot\,, Y]}(\C[G] \subset A \rtimes_\alpha G, \tau)$ and $\text{Der}_{[ \, \cdot \, , X]}(A, \tau)$.
 
By applying the third map in Theorem \ref{Thm:Bij_Map}, one has the right $(A \otimes A^\circ)''$-module isomorphism
    $$ \text{Der}_{[ \,\cdot\,, Y]}(\C[G] \subset A \rtimes_\alpha G,\tau) = \bigoplus_{g \in G} (\text{Der}_{[ \,\cdot\,, X]}(A,\tau))_{1 \otimes \alpha_g},$$
where the direct sum is with respect to the $\langle \cdot, \cdot \rangle_Y$, where $Y = X \cup \{u_g : g \in G\}$.

Lastly, by Lemma \ref{Lem:Decomp} and $\text{Der}_{[\cdot , \{u_g : g \in G\}]}(\C[G],\tau)= \overline{\text{InnDer}(\C[G],\tau)}$, we can apply Corollary \ref{Cor:Bij_Map_Formula} to the following subspaces $\text{Der}_{[ \,\cdot\,, Y] } (A\rtimes_\alpha G,\tau) \subset \text{Der}(A\rtimes_\alpha G,\tau)$ and $\text{Der}_{[ \,\cdot\,, X]}(A,\tau) \subset \text{Der}(A,\tau)$ to get
    \begin{align*}
        \Delta( \C[G] \subset A \rtimes_\alpha G, \tau) &= \dim\text{Der}_{[ \,\cdot\,, Y]}(\C[G] \subset A \rtimes_\alpha G,\tau)_{((A\rtimes_\alpha G) \otimes (A\rtimes_\alpha G)^\circ)''} \\
        &= \frac{1}{|G|}  \dim \text{Der}_{[ \, \cdot \, , X]}(A,\tau)_{(A \otimes A^\circ)''}\\
        &= \frac{1}{|G|} \Delta(A,\tau),
    \end{align*} 
and
    \begin{align*}
        \Delta(A \rtimes_\alpha G, \tau)-1 &= \dim\text{Der}_{[\,\cdot\, , Y]}(A \rtimes_\alpha G,\tau)_{((A\rtimes_\alpha G) \otimes (A\rtimes_\alpha G)^\circ)''} -1\\
        &= \frac{1}{|G|} ( \dim \text{Der}_{[ \, \cdot \, , X]}(A,\tau)_{(A \otimes A^\circ)''} -1)\\
        &= \frac{1}{|G|} (\Delta(A,\tau)-1).\qedhere
    \end{align*} 
\end{proof}

The following corollary follows by a similar proof to Corollary \ref{Cor:Subgroup}.
\begin{cor}
Let $G\stackrel{\alpha}{\curvearrowright}(M,\tau)$ be a trace-preserving action of a finite abelian group $G$ on a tracial von Neumann algebra and let $A\subset M$ be a finitely generated unital $*$-subalgebra which is globally invariant under $\alpha$. If $H \subset G$ is a finite subgroup of $G$, then 
 $$\Delta( A \rtimes_\alpha G,\tau) -1= \frac{1}{[G: H]} (\Delta ( A\rtimes_\alpha H,\tau)-1).$$
\end{cor}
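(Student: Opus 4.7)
The plan is to mimic the argument used in Corollary \ref{Cor:Subgroup}, applying the main identity from Theorem \ref{Thm:Delta_Dim} twice: once to the pair $(A, G)$ and once to the pair $(A, H)$. To do this I first need to verify that both applications are legitimate. Since $G$ is a finite abelian group and $H \subset G$ is a subgroup, $H$ is automatically a finite abelian group, so the abelian hypothesis of Theorem \ref{Thm:Delta_Dim} is met in both cases. Also, since $A$ is globally invariant under $\alpha_g$ for every $g \in G$, it is in particular globally invariant under $\alpha_h$ for every $h \in H$, so $A$ is globally invariant under the restricted action of $H$.

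Next I would invoke Theorem \ref{Thm:Delta_Dim} for the full group $G$ to get
\[
\Delta(A \rtimes_\alpha G, \tau) - 1 = \frac{1}{|G|}(\Delta(A,\tau) - 1),
\]
and then apply the same theorem to the subgroup $H$ acting on $A$ (via the restricted action) to obtain
\[
\Delta(A \rtimes_\alpha H, \tau) - 1 = \frac{1}{|H|}(\Delta(A,\tau) - 1).
\]

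Finally I would combine these two identities algebraically: writing $\frac{1}{|G|} = \frac{|H|}{|G|} \cdot \frac{1}{|H|} = \frac{1}{[G:H]} \cdot \frac{1}{|H|}$ and substituting from the $H$-identity gives
\[
\Delta(A \rtimes_\alpha G, \tau) - 1 = \frac{1}{[G:H]} \cdot \frac{1}{|H|}(\Delta(A,\tau) - 1) = \frac{1}{[G:H]}(\Delta(A \rtimes_\alpha H, \tau) - 1),
\]
which is the desired formula. There is no real obstacle here; the entire content of the corollary is packaged inside Theorem \ref{Thm:Delta_Dim}, and the only subtlety worth checking is the bookkeeping that the hypotheses of that theorem transfer to the subgroup $H$, which they do immediately because abelianness and global $\alpha$-invariance are inherited on restriction.
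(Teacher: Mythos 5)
Your proposal is correct and matches the paper's intended argument: the paper simply notes that this corollary "follows by a similar proof to Corollary \ref{Cor:Subgroup}," which is precisely the double application of Theorem \ref{Thm:Delta_Dim} (to $G$ and to $H$) followed by the algebraic substitution $\frac{1}{|G|} = \frac{1}{[G:H]}\cdot\frac{1}{|H|}$. Your explicit check that $H$ inherits abelianness and that $A$ remains globally invariant under the restricted action is exactly the right bookkeeping.
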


The following is an estimate for the free entropy dimension $\delta_0$ when we consider the crossed product of a von Neumann algebra with a finite abelian group, this uses a known inequality, $\delta_0 \leq \Delta$ (see \cite[Corollary 4.6]{CS05}).
\begin{cor}\label{Cor:Dim_Bounds}
Let $G\stackrel{\alpha}{\curvearrowright}(M,\tau)$ be a trace-preserving action of a finite abelian group $G$ on a tracial von Neumann algebra and let $A\subset M$ be a finitely generated unital $*$-subalgebra which is globally invariant under $\alpha$. Then for any generating set $Y$ of $A \rtimes_\alpha G$, we have
\begin{equation}\label{Eq:Micro_FED}
    \delta_0(Y)  \leq  \frac{1}{|G|} (\Delta(A,\tau)-1) +1.
\end{equation} 
In particular, if $A$ is a weak operator topology dense subset of $M$ with $X$ is a finite self-adjoint generating subset of $A$ then for any generating set $Y$ of $A\rtimes_\alpha G$, we have
$$  \delta_0(Y) \leq  \frac{1}{|G|} (|X| -1) +2.$$
\end{cor}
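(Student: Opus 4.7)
This corollary is a short formal consequence of two facts already in hand: the microstate inequality $\delta_0 \leq \Delta$ from \cite[Corollary 4.6]{CS05}, and the crossed-product identity for $\Delta$ proven in Theorem \ref{Thm:Delta_Dim}. Nothing new has to be set up; the work is in stitching these together.

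For the displayed inequality \eqref{Eq:Micro_FED}, I would first invoke the fact that $\Delta(Y,\tau)$ depends only on the $*$-algebra generated by $Y$ rather than on the particular generating tuple --- this independence is packaged into the derivation-space characterization by Lemma \ref{Lem:CS_Correp_Der} and was originally established in \cite[Theorem 3.3]{CS05}. Applied to any generating set $Y$ of $A \rtimes_\alpha G$ this gives $\delta_0(Y) \leq \Delta(Y,\tau) = \Delta(A \rtimes_\alpha G,\tau)$, and then substituting the formula from Theorem \ref{Thm:Delta_Dim} on the right-hand side produces exactly \eqref{Eq:Micro_FED}.

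For the ``in particular'' statement, the only remaining ingredient is the universal estimate $\Delta(A,\tau) \leq |X|$. Here the WOT-density hypothesis is what lets us quote it cleanly: it forces $(A \otimes A^\circ)'' = M \bar\otimes M^\circ$, so all von Neumann dimensions are computed over the ambient factor. By Lemma \ref{Lem:CS_Correp_Der}, $\Delta(A,\tau)$ is the von Neumann dimension of the closed submodule $\text{Der}_{[\,\cdot\,,X]}(A,\tau) \subset \text{Der}(A,\tau)$, and the map $\phi_X$ of \cite[Lemma 1.1]{CN22} is a right $(A\otimes A^\circ)''$-linear embedding of $\text{Der}(A,\tau)$ into the module $L^2(A \otimes A^\circ, \tau \otimes \tau^\circ)^X$, whose von Neumann dimension is $|X|$. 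Combining these gives $\Delta(A,\tau) \leq |X|$; plugging this into \eqref{Eq:Micro_FED} finishes the argument.

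There is no genuine obstacle here --- the whole content of the corollary has been absorbed into Theorem \ref{Thm:Delta_Dim} together with the one-line estimate $\Delta(A,\tau) \leq |X|$ provided by $\phi_X$. The only care needed is to remember that $\Delta$ is an algebra invariant, so that the arbitrary generating set $Y$ of $A \rtimes_\alpha G$ on the left of \eqref{Eq:Micro_FED} can be replaced by the algebra itself before applying Theorem \ref{Thm:Delta_Dim}.
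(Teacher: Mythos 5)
Your proof is correct and follows essentially the same route as the paper: combine $\delta_0 \leq \Delta$ from \cite[Corollary 4.6]{CS05} with the $*$-algebra invariance of $\Delta$ and Theorem \ref{Thm:Delta_Dim}, then bound $\Delta(A,\tau) \leq |X|$ for the second claim (the paper leaves this last estimate implicit, whereas you justify it via the embedding $\phi_X$ into $L^2(A\otimes A^\circ,\tau\otimes\tau^\circ)^X$). Note that your argument, like the paper's own proof, actually yields the sharper constant $\frac{1}{|G|}(|X|-1)+1$ rather than the $+2$ appearing in the corollary's statement.
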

\begin{proof}
Using \cite[Corollary 4.6]{CS05} and Theorem \ref{Thm:Delta_Dim}, we have
$$\delta_0(Y) \leq \Delta(A \rtimes_\alpha G,\tau)= \frac{1}{|G|}(\Delta(A,\tau)-1)+1.$$

Lastly, let $A= \C \langle X \rangle$ be weak operator topology dense in $M$. Then, since $\Delta$ is a $*$-algebra invariant, we further obtain
\begin{equation*}
    \delta_0(Y) = \frac{1}{|G|}(\Delta(A,\tau)-1)+1\leq \frac{1}{|G|}(|X| -1) +1. \qedhere
\end{equation*}
\end{proof}

Similar to Example \ref{Ex:Fin_Gen_gp_Sigma}, the following example we consider $G$ to be a countable abelian group.

\begin{ex}\label{Ex:Fin_Gen_gp_Delta}
Let $G$, $G_n$, $L(G)$, $M$, $\alpha$, $A$, and $Y$ be as in Example \ref{Ex:Fin_Gen_gp_Sigma}. We use the same proof but using \cite[Theorem 3.3]{CS05} and \cite[Corollary 3.5]{CS05} instead of \cite[Corollary 2.13]{CN21} and \cite[Corollary 3.3]{CN22}, respectively, one has
$$ \Delta( (A\rtimes_\alpha G_n) \vee \C\langle Y\rangle, \tau ) \leq 1 +\frac{1}{|G_n|} \Delta(A,\tau).$$
Hence, for all $\varepsilon>0$, $A \rtimes_\alpha G$ admits a dense $*$-subalgebra $B$ with $\Delta(B,\tau) \leq 1 + \varepsilon$. By Corollary \ref{Cor:Dim_Bounds}, it follows that for all $\varepsilon>0$, $M \rtimes_\alpha G$ admits a generating set $Y$ such that $\delta_0(Y) \leq 1 + \varepsilon. \hfill \blacksquare$
\end{ex}

In \cite[Corollary 5]{Shl22} Shlyakhtenko showed that there exists a particular generating set $Y$ of $A \rtimes_\alpha G$ such that $\delta_0(Y) \leq |G|^{-1}(2 |X| +2)+1$. Corollary \ref{Cor:Dim_Bounds} shows that any generating set of $A \rtimes_\alpha G$ can be used to obtain the sharper bound in equality (\ref{Eq:Micro_FED})

Although, it is still unknown if $\delta^*, \delta^\star,\delta_0$ are $*$-algebra invariants; when $\Delta$ and $\dim \text{Der}_c$ agree, one has that $\delta^*$, $\delta^\star$ and $\delta_0$ are indeed $*$-algebra invaraints by \cite[Theorem 2]{Shl09} and \cite[Lemma 4.1, Theorem 4.4]{CS05}. The following corollary extends the class where $\delta^*$, $\delta^\star$ and $\delta_0$ agree and are $*$-algebra invaraints.
\begin{cor}
Let $G\stackrel{\alpha}{\curvearrowright}(M,\tau)$ be a trace-preserving action of a finite abelian group $G$ on a tracial von Neumann algebra and let $A\subset M$ be a finitely generated unital $*$-subalgebra which is globally invariant under $\alpha$. If we have $\dim \emph{Der}_c(A,\tau)_{(A \otimes A^\circ)''} = \Delta(A,\tau)$ and $A''$ can be embedded in the ultrapower of the hyperfinite $\emph{II}_1$ factor, then for any generating set $Y$ of $A \rtimes_\alpha G$, one has
$$\dim \overline{\emph{Der}_c(A,\tau)}_{(A \otimes A^\circ)''} = \delta_ 0(Y) = \delta^*(Y)= \delta^\star(Y) = \Delta(A \rtimes_\alpha G,\tau).$$
If we instead have $\sigma(A,\tau) = \Delta(A,\tau)$, then for any generating set $Y$ of $A \rtimes_\alpha G$, one has
$$\sigma(A \rtimes_\alpha G,\tau) = \delta^*(Y)= \delta^\star(Y) = \Delta(A \rtimes_\alpha G,\tau).$$
\end{cor}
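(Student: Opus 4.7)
The plan is to sandwich the free entropy dimensions between two quantities that we have already shown to coincide. Concretely, both statements follow by combining the crossed product formulas from Theorems \ref{Thm:Anot_subs}, \ref{Thm:Free_Stein_Dim}, and \ref{Thm:Delta_Dim} with the standard inequality chains connecting $\dim \overline{\text{Der}_c}$, $\sigma$, $\delta_0$, $\delta^*$, $\delta^\star$, and $\Delta$.

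For the first statement I would start by invoking Theorem \ref{Thm:Anot_subs}, then substitute the hypothesis $\dim \overline{\text{Der}_c(A,\tau)} = \Delta(A,\tau)$, and finally run Theorem \ref{Thm:Delta_Dim} in reverse to conclude
$$\dim \overline{\text{Der}_c(A \rtimes_\alpha G, \tau)} = 1 + \frac{1}{|G|}\bigl(\Delta(A,\tau) - 1\bigr) = \Delta(A \rtimes_\alpha G, \tau).$$
Writing $B := A \rtimes_\alpha G$, for any generating set $Y$ of $B$ I would then invoke the chain
$$\dim \overline{\text{Der}_c(B,\tau)} \leq \delta_0(Y) \leq \delta^*(Y) \leq \delta^\star(Y) \leq \Delta(B,\tau),$$
where the leftmost inequality is \cite[Theorem 2, Corollary 17]{Shl09} (applicable because $R^\omega$-embeddability of $A''$ lifts to $B''$ since $G$ is finite), the middle two are the standard comparisons among free entropy dimensions, and the rightmost is \cite[Corollary 4.6]{CS05}. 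Since the two endpoints both equal $\Delta(B,\tau)$, every term in the chain agrees, which gives the claim.

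The second statement is handled in exactly the same way. Combining Theorems \ref{Thm:Free_Stein_Dim} and \ref{Thm:Delta_Dim} with the hypothesis $\sigma(A,\tau) = \Delta(A,\tau)$ yields $\sigma(B,\tau) = \Delta(B,\tau)$, and then the sandwich
$$\sigma(B,\tau) \leq \delta^*(Y) \leq \delta^\star(Y) \leq \Delta(B,\tau)$$
— using the free Stein bound on non-microstates dimension from \cite{CN22} together with \cite[Lemma 4.1, Theorem 4.4]{CS05} — pins down every intermediate term. This argument avoids $\delta_0$ entirely, which is why the $R^\omega$-embeddability hypothesis is absent from this part of the statement.

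I do not anticipate a serious obstacle here, as the corollary is essentially a bookkeeping consequence of Theorems A--C. The one subtle point worth checking is that the left and right endpoints of each sandwich are genuine $*$-algebra invariants of $B$, independent of the generating set $Y$; this is automatic for $\sigma$, $\dim \overline{\text{Der}_c}$, and $\Delta$, and it is precisely what lets us conclude that the sandwiched non-microstates quantities $\delta^*(Y)$ and $\delta^\star(Y)$ (and $\delta_0(Y)$ in the first case) are themselves generator-independent for this class of algebras.
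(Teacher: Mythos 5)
Your proposal is correct and follows essentially the same route as the paper: compute both endpoints of the sandwich for $A \rtimes_\alpha G$ via Theorems \ref{Thm:Anot_subs}, \ref{Thm:Free_Stein_Dim}, and \ref{Thm:Delta_Dim}, then squeeze $\delta_0$, $\delta^*$, $\delta^\star$ using the inequalities from \cite{Shl09} and \cite{CS05}. Your remark that $R^\omega$-embeddability of $A''$ passes to $(A\rtimes_\alpha G)''$ because $G$ is finite is a point the paper leaves implicit, and is a worthwhile addition.
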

\begin{proof}
We only show the first set of equalities, since the other uses a similar proof but with \cite[Corollary 4.4]{CN21} and Theorem \ref{Thm:Free_Stein_Dim}. By \cite[Theorem 2]{Shl09} and \cite[Lemma 4.1, Theorem 4.4]{CS05}, we have 
$$\dim\overline{ \text{Der}_c(A,\tau)}_{(A \otimes A^\circ)''} \leq \delta_0(X) \leq \delta^*(X) \leq \delta^\star(X) \leq \Delta(A,\tau).$$
Since $\dim \overline{\text{Der}_c(A,\tau)}_{(A \otimes A^\circ)''}= \Delta(A,\tau)$ it follows that 
$$\dim \overline{\text{Der}_c(A,\tau)}_{(A \otimes A^\circ)''} = \delta_0(x) =\delta^*(X) = \delta^\star(X) = \Delta(A,\tau).$$
It follows from Theorem \ref{Thm:Anot_subs} and Theorem \ref{Thm:Delta_Dim} that
\begin{align*}
    \dim \overline{\text{Der}_c(A \rtimes_\alpha G, \tau)}_{((A\rtimes_\alpha G) \otimes (A\rtimes_\alpha G)^\circ)''} &= \frac{1}{|G|}( \dim \overline{\text{Der}_c(A,\tau)}_{(A \otimes A^\circ)''} -1) +1\\
    &=\frac{1}{|G|}( \Delta(A,\tau) -1) +1\\
    &= \Delta(A \rtimes_\alpha G, \tau).
\end{align*} 
Thus, we have $\dim \overline{\text{Der}_c(A \rtimes_\alpha G, \tau)}_{((A\rtimes_\alpha G) \otimes (A\rtimes_\alpha G)^\circ)''} =\delta_0(Y) = \delta^*(Y)= \delta^\star(Y) = \Delta(A \rtimes_\alpha G,\tau)$.
\end{proof}

%%%%%%%%%%%%%%%%%%%%%%%%%
%       References      %
%%%%%%%%%%%%%%%%%%%%%%%%%

\bibliographystyle{amsalpha}
\bibliography{references}

\providecommand{\bysame}{\leavevmode\hbox to3em{\hrulefill}\thinspace}
\providecommand{\MR}{\relax\ifhmode\unskip\space\fi MR }
% \MRhref is called by the amsart/book/proc definition of \MR.
\providecommand{\MRhref}[2]{%
  \href{http://www.ams.org/mathscinet-getitem?mr=#1}{#2}
}
\providecommand{\href}[2]{#2}
\begin{thebibliography}{HNN49}

\bibitem[CN21]{CN21}
Ian Charlesworth and Brent Nelson, \emph{Free {S}tein irregularity and
  dimension}, J. Operator Theory \textbf{85} (2021), no.~1, 101--133.
  \MR{4198966}

\bibitem[CN22]{CN22}
\bysame, \emph{On free {S}tein dimension}, \textit{Preprint available at}
  \texttt{arXiv:2201.000621}, 2022.

\bibitem[CS05]{CS05}
Alain Connes and Dimitri Shlyakhtenko, \emph{{$L^2$}-homology for von {N}eumann
  algebras}, J. Reine Angew. Math. \textbf{586} (2005), 125--168. \MR{2180603
  (2007b:46104)}

\bibitem[HNN49]{HNN49}
Graham Higman, B.~H. Neumann, and Hanna Neumann, \emph{Embedding theorems for
  groups}, J. London Math. Soc. \textbf{24} (1949), 247--254. \MR{32641}

\bibitem[MSY20]{MSY20}
Tobias Mai, Roland Speicher, and Sheng Yin, \emph{The free field: realization
  via unbounded operators and atiyah property}, \textit{Preprint available at}
  \texttt{arXiv:1905.08187}, 2020.

\bibitem[Pet09]{Pet09}
Jesse Peterson, \emph{A 1-cohomology characterization of property ({T}) in von
  {N}eumann algebras}, Pacific J. Math. \textbf{243} (2009), no.~1, 181--199.
  \MR{2550142}

\bibitem[Shl09]{Shl09}
Dimitri Shlyakhtenko, \emph{Lower estimates on microstates free entropy
  dimension}, Anal. PDE \textbf{2} (2009), no.~2, 119--146. \MR{2547131
  (2012a:46129)}

\bibitem[Shl22]{Shl22}
\bysame, \emph{A inequality for non-microstates free entropy dimension for
  crossed products by finite abelian groups}, \textit{Preprint available at}
  \texttt{arXiv:2201.09503}, 2022.

\bibitem[Voi94]{Voi94}
Dan-Virgil Voiculescu, \emph{The analogues of entropy and of {F}isher's
  information measure in free probability theory. {II}}, Invent. Math.
  \textbf{118} (1994), no.~3, 411--440. \MR{1296352 (96a:46117)}

\bibitem[Voi96]{Voi96}
D.~Voiculescu, \emph{The analogues of entropy and of {F}isher's information
  measure in free probability theory. {III}. {T}he absence of {C}artan
  subalgebras}, Geom. Funct. Anal. \textbf{6} (1996), no.~1, 172--199.
  \MR{1371236}

\bibitem[Voi98]{Voi98}
Dan-Virgil Voiculescu, \emph{The analogues of entropy and of {F}isher's
  information measure in free probability theory. {V}. {N}oncommutative
  {H}ilbert transforms}, Invent. Math. \textbf{132} (1998), no.~1, 189--227.
  \MR{1618636 (99d:46087)}

\end{thebibliography}

\end{document}